\theoremstyle{definition}
\newtheorem{definition}{Definition}[subsection]
\newtheorem{example}{Example}[subsection]
\newtheorem{remark}{Remark}[subsection]
\theoremstyle{plain}
\newtheorem{proposition}{Proposition}[subsection]
\newtheorem{theorem}{Theorem}[subsection]
\newtheorem{lemma}{Lemma}[subsection]
\newtheorem{corollary}{Corollary}[subsection]
\DeclareMathOperator{\lt}{LT}
\DeclareMathOperator{\lc}{LC}
\DeclareMathOperator{\dom}{dom} 
\DeclareMathOperator{\codom}{codom}
\DeclareMathOperator{\obj}{obj}
\DeclareMathOperator{\Hom}{Hom}
\DeclareMathOperator{\GL}{GL}
\DeclareMathOperator{\Ima}{Im}
\DeclareMathOperator{\Tor}{Tor}
\DeclareMathOperator{\Ext}{Ext}
\DeclareMathOperator{\id}{id}
\DeclareMathOperator{\lm}{LM}
\title{Gr\"{o}bner Bases: Connecting Linear Algebra with Homological and Homotopical Algebra}
\author{Soutrik Roy Chowdhury}
\date{}
\begin{document}
\maketitle
\begin{abstract}
The main objective of this thesis is to connect the theory of Gr\"{o}bner bases to concepts of homological algebra. Gr\"{o}bner bases, an important tool in algebraic system and in linear algebra help us to understand the structure of an algebra presented by its generators and relations by constructing a basis of its set of relations. In this thesis we mainly deal with graded augmented algebras. Given a graded augmented algebra with its generators and relations, it is possible to construct a free resolution from its Gr\"{o}bner basis, known as Anick's resolution. Though rarely minimal, this resolution helps us to understand combinatorial properties of the algebra. The notion of a $K_2$ algebra was recently introduced by Cassidy and Shelton as a generalization of the notion of a Koszul algebra. We compute the Anick's resolution of $K_2$ algebra which shows several nice combinatorial properties. Later we compute some derived functors from Anick's resolution and outline how to construct $A_\infty$-algebra structure on $\Ext$-algebra from a minimal graded projective resolution in which we can use Anick's resolution as a tool. Thus this thesis provides an unique platform to connect concepts of linear algebra with homological algebra and homotopical algebra with the help of Gr\"{o}bner bases.
\end{abstract}
\pagenumbering{gobble}% Remove page numbers (and reset to 1)
\clearpage
\thispagestyle{empty}
\section*{Introduction} 
In this thesis, we discuss two major topics in algebra and algebraic system: Gr\"{o}bner bases [1][2] and Anick's resolution [15][14][12].\\ 
 Given polynomials $f, g \in \mathbb{Q}[x]$ with $g \neq 0$, there exist uniquely determined polynomials $q$ and $r$ in $\mathbb{Q}[x]$ such that $f = qg +r$ and $\deg r < \deg g$. The polynomial $r$ is called the remainder of $f$ with respect to $g$.\\ 
 This fundamental fact, known from high school, can be generalized to polynomial algebras over an arbitrary field. Indeed, there is an algorithm, known as Euclidean algorithm, to compute $q$ and $r$. The algorithm works as follows: if $\deg f < \deg g$, then $q=0$ and $r=f$. If $\deg f \geq \deg g$, let $ax^n$ with $a \in \mathbb{K}$ be the leading term of $f$ and $bx^m$ with $b \in \mathbb{K}$ the leading term of $g$. Then the degree of $r_1 = f - (a/b)x^{n-m}g$ is less than $\deg f$. If $\deg r_1 < \deg g$, then $q= (a/b)x^{n-m}$ and $r=r_1$. Otherwise one applies the same reduction to $r_1$ and arrives, after a finite number of steps, to the desired presentation.\\
 We would like to have similar division algorithm for polynomials in several variables. In section 3.6 we will give such an algorithm. Apparently there are two problems to overcome. The first is, that it is not clear what the leading term of such a polynomial should be. For example, there will be a problem to find the leading term of the polynomial $f = x_1^2x_2 + x_1x_2^2$. To define a leading term we must have a total order on the monomials. The second is, that one has to say what it means that the remainder is `small' compared with the divisors. We will deal with these problems with the concepts \textit{reductions} and \textit{S-polynomials}.\\
 The above desired division algorithm for polynomials with multi-variables comes under the notion of Gr\"{o}bner bases which is a type of basis defined on an ideal $I$ of an algebra. In 1965, Buchberger introduced the notion of Gr\"{o}bner bases for a polynomial ideal and an algorithm (known as Buchberger's algorithm) for their computations [1]. Since the end of seventies, Gr\"{o}bner bases have been an essential tool in the development of computational techniques for the symbolic solution to the systems of equations and in the development of effective methods in Algebraic Geometry and Commutative Algebra.\\ 
 Bergman [11] was the first to extend the Gr\"{o}bner bases and Buchberger algorithm to ideals in the free non-commutative algebra; later, [16] made precise in which sense Gr\"{o}bner bases can be computed in the free non-commutative algebra: there is a procedure which halts if and only if the ideal has a finite Gr\"{o}bner basis, in which case it returns such a basis; though it has to be noted that the property of having finite Gr\"{o}bner basis is undecidable. Gr\"{o}bner bases have also been generalized to various non-commutative algebras, of interest in Differential Algebra (e.g. Weyl algebras, enveloping algebras of Lie algebras).\\ 
 We begin this thesis with some background materials used to understand Gr\"{o}bner bases and Anick's resolution which we will describe in the later sections. In section 3.3, we define Gr\"{o}bner bases for both commutative and non-commutative algebras and introduce the notion of \textit{reduction} and \textit{S-polynomial} for both cases. We mention the Diamond's lemma which plays the pivotal role in the construction of Buchberger's algorithm.\\ 
 Let $D$ be an associative augmented $\mathbb{K}$-algebra. For many purposes one would like to have a projective resolution of $\mathbb{K}$ as a $D$-module. The bar resolution is always easy to define, but it is often too large to use in practice. On the other hand, minimal resolution [17] may exist, but they are often hard to write down in a way that is amenable to calculations. In 1986, David J Anick, in his paper `\textit{On the homology of associative algebras}' [15] described a new free resolution of $\mathbb{K}$ by $D$-modules which he has constructed from the Gr\"{o}bner basis of $D$. In section 4.2, we mention a theorem which presents the resolution. Though rarely minimal, it is small enough to offer some bounds but explicit enough to facilitate calculations, it is best suited for analyzing otherwise tricky algebras given via generators and relations.\\ 
 The notion of a $K_2$-algebra was recently introduced by Cassidy and Shelton in [10] as a generalization of the notion of a Koszul algebra. Conner and Goetz described an algebra $B_n$ in [8]. Theorem 3.10 of [8] states that for any $n \in \mathbb{N}$, $B_n$ is a $K_2$-algebra. We compute Anick's resolution of algebra $B_n$ (and hence of algebra $K_2$) in section 4.4.\\
 Later we mention some introductory category theory [4][5] and some concepts of homological algebra [6] which is used to show how we can use Anick's resolution to construct derived functors and $A_\infty$-algebra structures associated to $\Ext$-algebras. Derived functor is an important concept in homological algebra and $A_\infty$ algebra structure, which was invented in early sixties by James Stasheff, provides an important tool in homotopical algebra with a wide range on applications in mathematical physics, mirror symmetry, topology etc. This paper thus presents a unique bridging between Gr\"{o}bner bases, an important tool in algebraic system and theoretical computer science with homological algebra and homotopical algebra by constructing a free resolution from Gr\"{o}bner basis and later show the construction of derived functors and finally constructing $A_\infty$ structures from minimal free resolution through Merkulov's construction.
\clearpage
\section*{Acknowledgement} 
\epigraph{"As are the crests on the heads of peacocks, as are the gems on the hoods of cobras, so is mathematics at the top of all sciences."}{The Yajurveda (c. 600 BCE)}
Mathematics is a form of art which is used to draw several scientific tools. It depends on the artist to make the art beautiful and meaningful in front other people. If one devotes his time to understand mathematics, he will surely fall in love with mathematics. Though paying all my love and attention to learn mathematics and make this thesis an unique place to bridge linear algebra with homological algebra as well as with homotopical algebra (with a brief outline), this work is not possible without the help of the following people, both academically and non-academically. They are my \textit{angels}. \\
At first, I would like to thank my thesis advisor, \textit{ Prof. Vladimir Dotsenko}. Thank you very much for all your support and love to make my work fruitful. Whenever I was in trouble, you were always there to help me understanding different topics and from time to time  giving me motivations to learn various topics which plays a pivotal part in my learning. I would also like to say thanks for your help in my non-academic life also. Whenever I faced difficulties with rules and regulations in Trinity, you were always there to provide suggestions.\\ 
I would like to thank \textit{Fran}, \textit{Breda} and the \textit{family} for providing me such beautiful hospitality and a family care during my stay in Ireland. As an international student, adjusting in a new country is always a problem. I never face such problem and that is because of you. \textit{Fran} and \textit{Breda}, you both are truly my Irish parents.\\
 And finally thanks to my beloved mom and dad. Thanks for having faith on me. I must say, this is the beginning of devoting myself to mathematics, still long way to go. \textit{Mani} and \textit{Bapi}, I love you very much.
\clearpage 
\section{Notations}
\begin{itemize}
\item Let $A$ be a $\mathbb{K}$-algebra where $\mathbb{K}$ is a field. For any $a, b \in A$, $a \cdot b$ and $ab$ are same. 
\item By $\mathbb{N}, \mathbb{Q}$ we denote the set of natural numbers and the set of rational numbers respectively.
\item Let $I \subset A$ be an ideal of algebra $A$. By $I = (a)$, we mean $I$ is generated by an element $a \in I$.
\item For a word $f$, $\overline{f}$ denotes the reduction of $f$ to its normal form.
\end{itemize}
\clearpage 
\pagenumbering{arabic}
\tableofcontents 
 
\section{Background Materials}
\subsection{Algebra}
\begin{definition}
An \textbf{algebra} is a vector space $A$ (over a field $\mathbb{K}$) equipped with a multiplication $\mu: A \otimes A \rightarrow A$, given by $x, y \mapsto \mu(x,y) = xy$, with the following properties:
\begin{itemize}
\item $(x + y)z = xz + yz$ for $x,y,z \in A$
\item $x(y +z) = xy +xz$ for $x,y,z \in A$
\item $(ab)(xy) = (ax)(by)$ where $x,y \in A$ and $a,b \in \mathbb{K}$.
\end{itemize}
\end{definition}
\begin{example}
Let us consider the vector space of all commutative polynomials in $n$ variables over some field $\mathbb{K}$ with usual polynomial addition and multiplication, let us  denote it by $\mathbb{K}[x_1,x_2,\dots,x_n]$. We take the multiplication as usual polynomial multiplication, then the vector space $\mathbb{K}[x_1,x_2,\dots,x_n]$ form an algebra over $\mathbb{K}$. 
\end{example}
\begin{definition}
Let $A$ be an algebra over a field $\mathbb{K}$. Then $I \subset A$ is a \textbf{left ideal} of $A$ if:
\begin{enumerate}
\item  $x-y \in I$ for any $x, y \in I$.
\item $ax \in I$ for any $a \in A$ and any $x \in I$.
\end{enumerate}
Similarly $I \subset A$ is a \textbf{right ideal} of $A$ if condition 1 holds and $xa \in I$ for any $a \in A$ and $x \in I$. $I$ is a \textbf{two-sided ideal} (more generally 'an ideal') if it is a left as well as a right ideal.
\end{definition}
\begin{example}
For algebra $\mathbb{K}[x]$, the set generated by $x^2$ (usually denoted by $(x^2)$) is an ideal of $\mathbb{K}[x]$.
\end{example}
\subsection{Associative algebra}
\begin{definition}
An algebra is \textbf{associative} if for the multiplication $\mu : A \otimes A \rightarrow A$ and for any $a_1, a_2, a_3 \in A$ we have the equality
\begin{equation}
((a_1a_2)a_3) = (a_1(a_2a_3)).
\end{equation}
An associative algebra is called \textbf{commutative} if for any $a_1,a_2 \in A$ we have
\begin{equation}
(a_1a_2) = (a_2a_1).
\end{equation}
\end{definition}
\begin{example}
Algebra of commutative polynomials with either single or multivariables over a field $\mathbb{K}$ is an example of commutative associative algebra.We usually denote the algebra of commutative polynomials with $n$ variables by $\mathbb{K}[x_1,x_2,\dots,x_n]$.\\ 
 However algebra of non-commutative polynomials are examples of \\  non-commutative associative algebra and we usually denote a non-commutative polynomial algebra with $n$ variables by $\mathbb{K}\langle x_1,x_2,\dots,x_n\rangle$.
\end{example}
\subsection{Augmentation (Algebra)}
\begin{definition}
An \textbf{augmentation} of an associative algebra $A$ over a field $\mathbb{K}$ is an algebra homomorphism $ \epsilon : A \rightarrow \mathbb{K}$. An algebra together with an augmentation is called an \textbf{augmented algebra}.
\end{definition}
\begin{example}
Let $A = \mathbb{K}[G]$ be group algebra[18] of a group $G$, then $\epsilon : A \rightarrow \mathbb{K}$ given by,
\[  \sum a_ix_i \mapsto \sum a_i  \]
where $a_i's  \in \mathbb{K}$ and $x_i's \in G$, is an augmentation.
\end{example}
\subsection{Graded algebra}
\begin{definition}
An algebra $A$ is \textbf{graded} if:
\begin{itemize}
\item $ A = \bigoplus_{i=0}^{\infty}A_i$ where $A_i$'s are subspaces of $A$.
\item $ A_iA_j \subseteq A_{i+j}$.
\end{itemize}
\end{definition}
\begin{example}
The polynomial algebra is graded by degree.
\end{example}
\begin{remark}
Throughout this paper, we deal with finite dimensional subspaces of an algebra. It is possible to superimpose some homogeneous relations of a given degree so that the algebra remains infinite dimensional regardless of the form of its relations, stated by the \textbf{Golod-Shafarevich} theorem [3]. 
\end{remark}
\subsection{Hilbert Series}
One of our main goal is to find tools to investigate properties of algebras. One of the main properties we are interested in is the size of an algebra. Most algebras we will be considering are infinite dimensional, so the question needs to be better defined. In the graded case at least, this is simple - we can ask the dimension of each graded component. This gives a sequence of numbers, which describes the size of the entire algebra. When we have a sequence of numbers, we can talk about the generating function. This is called \textit{Hilbert series} for our graded algebra.
\begin{definition}
Let $A = \bigoplus_{n=0}^{\infty}A_n$ be a graded algebra. Then the formal series
\[ H_A = \sum_{n=0}^{\infty}\dim(A_n)t^n \]
is called the \textbf{Hilbert series} of the algebra $A$.
\end{definition}
\begin{example}
Let us give an example - how to find the Hilbert series of the free associative algebra $A = \mathbb{K}\langle x,y,z\rangle$.\\
One can easily see that this algebra is graded - the graded components are homogeneous subspaces of each degree. So the subspace of degree $0$ has dimension $1$. The subspace of degree $1$ is generated by $x,y,z$. So that has dimension $3$. The subspace of degree $2$ is generated by $x^2,xy,xz,yx,y^2,yz,zx,$\\$zy,z^2$, which is of dimension $9$.\\
In general, the subspace of degree $n$ has dimension $3^n$ (as there are $3$ independent choices for each of the $n$ positions). So
\[ H_A = \sum_{n=0}^{\infty}3^nt^n = \frac{1}{1-3t}   \]
\end{example}
\begin{theorem}
If we have two graded algebras $U$ and $V$, then we grade the algebra $U \oplus V$ with components $U_n \oplus V_n$ (except when $n=0$, in which case $(U \oplus V)_n = K$). Similarly we can define the graded components of $U \otimes V$ as $\sum_{i=0}^{n}U_i \otimes V_{n-i}$. Then
\[ H_{U \oplus V} = H_U + H_V  \]
and
\[  H_{U \otimes V} = H_UH_V   \]
\end{theorem}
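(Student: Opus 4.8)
The plan is to prove the two identities separately, in each case by computing $\dim$ of the degree-$n$ component of the combined algebra and matching it against the coefficient of $t^n$ on the right-hand side. For the direct sum, I would first observe that for $n \geq 1$ the graded component is literally $U_n \oplus V_n$, so $\dim (U \oplus V)_n = \dim U_n + \dim V_n$; the only subtlety is the $n = 0$ convention, where $(U \oplus V)_0 = \mathbb{K}$ has dimension $1$ rather than $2$. I would handle this by noting that $U$ and $V$, being augmented (or at least connected graded) algebras, each have $\dim U_0 = \dim V_0 = 1$, so the coefficient of $t^0$ in $H_U + H_V$ is $2$; hence strictly speaking one should write $H_{U \oplus V} = H_U + H_V - 1$, and I would either adopt that corrected form or remark that the statement is to be read with the reduced Hilbert series (omitting the degree-zero term). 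Summing over $n$ then gives the first identity.

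For the tensor product, the key step is the linear-algebra fact that if $\{u_\alpha\}$ is a basis of $U_i$ and $\{v_\beta\}$ is a basis of $V_{n-i}$, then $\{u_\alpha \otimes v_\beta\}$ is a basis of $U_i \otimes V_{n-i}$, so $\dim(U_i \otimes V_{n-i}) = \dim U_i \cdot \dim V_{n-i}$. Taking the direct sum over $0 \le i \le n$ that defines $(U \otimes V)_n$, and using that $\dim$ is additive over direct sums, yields
\[
\dim (U \otimes V)_n = \sum_{i=0}^{n} \dim U_i \cdot \dim V_{n-i}.
\]
I would then recognize the right-hand side as exactly the degree-$n$ Cauchy-product coefficient of $H_U \cdot H_V = \bigl(\sum_i \dim U_i \, t^i\bigr)\bigl(\sum_j \dim V_j \, t^j\bigr)$, and conclude $H_{U \otimes V} = H_U H_V$ by comparing coefficients of $t^n$ for every $n$.

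The main obstacle is not really mathematical depth but bookkeeping: one must be careful that the internal direct sum $\bigoplus_{i=0}^n U_i \otimes V_{n-i}$ is a genuine direct sum (so that dimensions add) and that the grading conventions are consistent between the two halves of the statement — in particular the degree-zero normalization that makes the $\oplus$ formula hold only up to the constant $1$. I would dispatch this by stating the convention once at the outset (all algebras here are connected graded, $\dim A_0 = 1$) and then the rest is a routine manipulation of formal power series. No convergence issues arise since everything is done coefficientwise in $\mathbb{K}[[t]]$.
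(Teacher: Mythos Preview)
Your proposal is correct and follows essentially the same approach as the paper: compute $\dim$ of each graded component and match against the coefficient of $t^n$ on the right-hand side. If anything, you are more careful than the paper, which simply asserts $\dim(U\oplus V)_n = \dim U_n + \dim V_n$ without addressing the $n=0$ convention; your observation that the stated identity should really read $H_{U\oplus V} = H_U + H_V - 1$ under the connected-graded convention is well taken.
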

\begin{proof}
For the first equality, we simply note that:
\[ \dim(U \oplus V)_n = \dim U_n + \dim V_n.   \]
For the second,
\[  \dim(U \otimes V)_n = \sum_{i=0}^{n}\dim U_i \dim V_{n-i}  \]
this exactly matches the co-efficient of $t^n$ in $H_UH_V$.
\end{proof}
\begin{corollary}
We call a subspace $V$ homogeneous if $V = \oplus A_n \cap V$. If $U$,$V$ are homogeneous subspaces of some algebra, then $H_{U+V} \leq H_U + H_V$ and $H_{UV} \leq H_UH_V$ (where the inequality is co-efficient wise, and the sum and product are set sum and multiplications).
\end{corollary}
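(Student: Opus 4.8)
The plan is to reduce both inequalities to the equalities already established in the preceding theorem by exhibiting suitable surjections of graded vector spaces. Recall that for homogeneous subspaces $U, V$ of an algebra $A$, the set sum $U + V$ is a homogeneous subspace whose $n$-th graded component is $(U+V)_n = U_n + V_n$, and the set product $UV$ is the homogeneous subspace spanned by all products $uv$ with $u \in U$, $v \in V$, so that $(UV)_n = \sum_{i=0}^{n} U_i V_{n-i}$ as a subspace of $A_n$.

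For the first inequality, I would consider the canonical linear map $U \oplus V \to U + V$ (external direct sum to internal set sum) given in degree $n$ by $(u, w) \mapsto u + w$ for $u \in U_n$, $w \in V_n$. This map is graded and surjective, hence $\dim(U+V)_n \leq \dim(U_n \oplus V_n) = \dim U_n + \dim V_n$ for every $n \geq 1$, and the degree-zero components agree. Since Hilbert series inequality is defined coefficient-wise, summing over $n$ and invoking $H_{U \oplus V} = H_U + H_V$ from the theorem gives $H_{U+V} \leq H_U + H_V$. For the second inequality, I would use the multiplication map $\mu \colon U \otimes V \to UV$, which in degree $n$ restricts to the surjection $\sum_{i=0}^{n} U_i \otimes V_{n-i} \twoheadrightarrow \sum_{i=0}^{n} U_i V_{n-i} = (UV)_n$; surjectivity is immediate since every spanning product $uv$ is the image of $u \otimes v$. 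Therefore $\dim(UV)_n \leq \dim(U \otimes V)_n$ for all $n$, and summing and applying $H_{U \otimes V} = H_U H_V$ from the theorem yields $H_{UV} \leq H_U H_V$.

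The only subtlety worth spelling out — and the place where one must be a little careful rather than technically stuck — is the bookkeeping at degree zero: in the direct-sum grading convention adopted in the theorem one artificially sets $(U \oplus V)_0 = \mathbb{K}$ rather than $\mathbb{K}^2$, so when comparing against the \emph{set} sum $U + V$ (whose degree-zero part is just $\mathbb{K}$ inside $A_0 = \mathbb{K}$) the degree-zero coefficients already match and the inequality is an equality there; for $n \geq 1$ the honest surjection argument above applies verbatim. I would note this explicitly so the coefficient-wise comparison is unambiguous, but otherwise the proof is a direct consequence of the existence of the two graded surjections together with the theorem.
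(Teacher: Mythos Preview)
Your proof is correct and follows the same approach as the paper: both reduce the inequalities to the equalities of Theorem~2.5.1 via the natural surjections $U \oplus V \twoheadrightarrow U + V$ and $U \otimes V \twoheadrightarrow UV$. The paper's proof is a single sentence (``This directly follows from theorem 2.5.1''), so you have simply spelled out the details---including the degree-zero bookkeeping, which the paper does not mention---but the underlying idea is identical.
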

\begin{proof}
This directly follows from theorem 2.5.1. We have equality if every element of $H_{U+V}$ can be written uniquely as $u+v$.
\end{proof}
\begin{example}
(Hilbert series of polynomial algebra):\\
Let us give another example - how to find the Hilbert series of the polynomial algebra $A = \mathbb{K}[x,y]$.\\
Again we can take grading by degree. So the subspace of degree $0$ has dimension $1$. The subspace of degree $1$ is generated by $x,y$, so it has dimension $2$. The subspace of degree $2$ is generated by $x^2,xy,y^2$, which has dimension $3$.\\
In general, the subspace of degree $n$ has dimension $n+1$ (as the word is determined by the position where the $x$'s stop and the $y$'s begin; there are $n+1$ such places). So
\[  H_A = \sum_{n=0}^{\infty}(n+1)t^n  \]
we note that
\[  \frac{1}{1-t} = \sum_{n=0}^{\infty}t^n.  \]
Taking derivatives on both sides give:
\[ \frac{1}{(1-t)^2} = \sum_{n=0}^{\infty}(n+1)t^n    \]
So
\[  H_A = \frac{1}{(1-t)^2}  \]
\end{example}
\begin{theorem}
The Hilbert series of the polynomial algebra $\mathbb{K}[x]$ is computed by the formula
\[ H_{\mathbb{K}[x]} = \prod_{x \in X}(1- t^{|x|})^{-1}.   \]
The Hilbert series of the exterior algebra $\bigwedge \mathbb{K}[x]$ is calculated by the formula
\[ H_{\bigwedge \mathbb{K}[x]} = \prod_{x \in X}(1 + t^{|x|}).   \]
In particular, in case of natural graduation and a finite set of generators $d$, we have:
\[ H_{\mathbb{K}[x]}^{-1} = (1-t)^d;\hspace{5mm} H_{\bigwedge \mathbb{K}[x]} = (1+t)^d     \]
\end{theorem}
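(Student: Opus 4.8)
The plan is to reduce both identities to the one-variable case and then invoke the multiplicativity of Hilbert series under tensor products (Theorem 2.5.1). The structural input is that the polynomial algebra on a generating set $X$ factors, as a graded algebra, as a tensor product of one-variable polynomial algebras, $\mathbb{K}[X]\cong\bigotimes_{x\in X}\mathbb{K}[x]$, and likewise the exterior algebra factors as $\bigwedge\mathbb{K}[X]\cong\bigotimes_{x\in X}\bigwedge\mathbb{K}[x]$, where $\bigwedge\mathbb{K}[x]=\mathbb{K}\oplus\mathbb{K}x$ with $x^2=0$. This is because each monomial is uniquely an ordered product of powers of the generators (with exponents $0$ or $1$ in the exterior case), so the multiplication maps assemble into an isomorphism of graded vector spaces that intertwines the algebra structures. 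When $X$ is infinite this is read as the colimit $\mathbb{K}[X]=\varinjlim_{Y}\mathbb{K}[Y]$ over finite subsets $Y\subseteq X$, and one must assume that only finitely many generators have degree $\le n$ for each $n$, so that each graded component is finite-dimensional and the infinite formal product of power series is well defined coefficientwise.

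Next I would compute the one-variable Hilbert series directly. For a single variable $x$ of weight $|x|$, the monomials $1,x,x^2,\dots$ sit in degrees $0,|x|,2|x|,\dots$, so $H_{\mathbb{K}[x]}=\sum_{k\ge 0}t^{k|x|}=(1-t^{|x|})^{-1}$ as a geometric series (the case $|x|=1$ is Example 2.5.2). For the exterior case $\bigwedge\mathbb{K}[x]$ has basis $\{1,x\}$ in degrees $0$ and $|x|$, so its Hilbert series is $1+t^{|x|}$.

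Then I would apply Theorem 2.5.1, extended from two tensor factors to any finite number by an immediate induction and then to the whole set $X$ via the stabilization remark above: in each fixed degree $n$ the graded piece of $\bigotimes_{x\in X}\mathbb{K}[x]$ agrees with that of $\bigotimes_{x\in Y}\mathbb{K}[x]$ once $Y$ contains every generator of degree $\le n$, and the same holds for the exterior algebra and for the partial products of power series. This yields $H_{\mathbb{K}[X]}=\prod_{x\in X}(1-t^{|x|})^{-1}$ and $H_{\bigwedge\mathbb{K}[X]}=\prod_{x\in X}(1+t^{|x|})$. Specializing to the natural grading $|x|=1$ for all $x$ and $|X|=d<\infty$ gives $H_{\mathbb{K}[X]}=(1-t)^{-d}$, hence $H_{\mathbb{K}[X]}^{-1}=(1-t)^d$, and $H_{\bigwedge\mathbb{K}[X]}=(1+t)^d$.

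The main obstacle is the first step: carefully justifying the tensor-product decomposition of the polynomial and exterior algebras, in particular dealing with the sign and ordering conventions of the exterior algebra and, in the infinite-generator case, making the colimit description and the infinite formal product rigorous (this is where the finiteness-in-each-degree hypothesis is really needed). Once that is settled, the rest is the geometric-series computation together with a direct appeal to Theorem 2.5.1.
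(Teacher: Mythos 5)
Your proposal is correct and follows essentially the same route as the paper: compute the single-generator series directly as a geometric series (resp. $1+t^{|x|}$), reduce the finitely generated case to this via the tensor-product multiplicativity of Theorem 2.5.1, and handle infinitely many generators by noting that each coefficient of the series only involves the finitely many generators of degree at most $n$. You merely spell out the tensor decomposition and the degreewise stabilization more explicitly than the paper does, which is a reasonable elaboration rather than a different argument.
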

\begin{proof}
In the case of one generator, the Hilbert series in the power of $n = |x|$ is computed straightforwardly: it is equal to $1 + t^n + t^{2n} + \dots = (1-t^n)^{-1}$ in case of polynomial ring and $1+t^n$ in the case of exterior algebra[19]. The case of finite number of generators reduces to this one, with the help of theorem 2.5.1. Finally, in case of infinite number of generators, the degree of generators must increase, for if not, we do not get finite-dimensionality of graded subspaces. Consequently, for every $n$, the segment of the Hilbert series up to the exponent $n$ depends only on finite number of generators with the degree not exceeding $n$, thus everything reduces to the finite case.
\end{proof}
\begin{remark}
If we apply the formula of Hilbert series for polynomial algebra of theorem 2.5.2 to our previous computed example 2.5.2 we will get the exact result.
\end{remark}
\subsection{Hilbert series for free product} 
\begin{definition}
We define the \textbf{free product} of two algebra $A,B$ as the disjoint union of their generators, with both sets of relations. We usually denote it by $A \ast B$.
\end{definition}
\begin{example}
The free product of the algebra $A = \langle x\hspace{1mm} |\hspace{1mm} x^3 + 2x^2 \rangle$ and the algebra $B = \langle x,y\hspace{1mm} |\hspace{1mm} 2x^2 = y^2 \rangle$ is given by the algebra $ A \ast B = \langle x,y,z\hspace{1mm} |\hspace{1mm} x^3 + 2x^2 , 2y^2 = z^2 \rangle$.
\end{example}
\begin{theorem}[Hilbert series of free product]
If $A,B$ are graded algebras, then
\[ (H_{A \ast B})^{-1} = H_A^{-1} + H_B^{-1} - 1.   \]
\end{theorem}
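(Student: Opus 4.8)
The plan is to compare graded dimensions degree by degree by producing an explicit vector space decomposition of the free product $A \ast B$. Write $\bar A = \bigoplus_{n \geq 1} A_n$ and $\bar B = \bigoplus_{n \geq 1} B_n$ for the augmentation ideals (the positive-degree parts). First I would argue that, as a graded vector space,
\[ A \ast B \;\cong\; \mathbb{K} \;\oplus\; \bigoplus_{k \geq 1} \;\bigoplus_{\substack{C_1, \dots, C_k \\ C_i \in \{\bar A, \bar B\},\ C_i \neq C_{i+1}}} C_1 \otimes C_2 \otimes \cdots \otimes C_k, \]
i.e. every element of $A \ast B$ is uniquely a $\mathbb{K}$-linear combination of alternating tensors of positive-degree elements coming alternately from $A$ and $B$. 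This is the normal form for elements of a free product: one uses the relations of $A$ and of $B$ to collapse any two adjacent factors from the same algebra into a single factor of that algebra, and the fact that $A$ and $B$ share no generators and no cross-relations guarantees that no further collapse is possible, so the normal form is unique. This step is where I expect the real work to lie — making the "no further relations" claim rigorous, ideally by exhibiting $A \ast B$ concretely (e.g. via a Gröbner basis / Diamond-lemma argument, which fits the theme of the paper) as the span of these alternating tensors with an explicitly defined associative product.

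Granting the decomposition, the Hilbert series computation is formal. By Theorem 2.5.1 (additivity over $\oplus$ and multiplicativity over $\otimes$ of Hilbert series), each summand $C_1 \otimes \cdots \otimes C_k$ contributes $H_{C_1} H_{C_2} \cdots H_{C_k}$, and since $H_{\bar A} = H_A - 1$ and $H_{\bar B} = H_B - 1$, summing over all alternating words of each length $k \geq 1$ gives
\[ H_{A \ast B} \;=\; 1 + \sum_{k \geq 1} \Bigl( (H_A - 1)(H_B - 1) \Bigr)^{\lfloor k/2 \rfloor} \bigl( (H_A - 1) + (H_B - 1) \bigr)^{\,k \bmod 2}\text{-type terms}, \]
more cleanly: the generating function of alternating words starting with either letter is the geometric-type series $\dfrac{(H_A - 1) + (H_B - 1) + 2(H_A-1)(H_B-1)}{1 - (H_A - 1)(H_B - 1)}$, so that
\[ H_{A \ast B} \;=\; 1 + \frac{(H_A - 1) + (H_B - 1) + 2(H_A - 1)(H_B - 1)}{1 - (H_A - 1)(H_B - 1)}. \]
A short algebraic simplification of the right-hand side collapses it to $\dfrac{1}{\,H_A^{-1} + H_B^{-1} - 1\,}$, which is the claimed identity $(H_{A\ast B})^{-1} = H_A^{-1} + H_B^{-1} - 1$; one checks this by clearing denominators and verifying the polynomial identity in $H_A, H_B$.

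An alternative, slicker route I would keep in reserve avoids writing the geometric series by hand: set $a = H_A - 1$, $b = H_B - 1$ and observe that the alternating-word count satisfies an obvious recursion (appending a letter from the algebra not used last), which in generating-function form says $H_{A\ast B} - 1 = a(1 + S_B) + b(1 + S_A)$ with $S_A, S_B$ the series for alternating words ending in a factor from $A$, resp. $B$, and $S_A = a(1 + S_B)$, $S_B = b(1 + S_A)$. Solving this linear system for $S_A, S_B$ and substituting back yields the same closed form. Either way, the only nontrivial mathematical content is the uniqueness of the alternating normal form in $A \ast B$; once that is in hand, everything else is bookkeeping with formal power series justified entirely by Theorem 2.5.1.
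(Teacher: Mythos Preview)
Your proposal is correct and is essentially the paper's own argument: the paper also justifies the alternating normal form via the observation that the Gr\"{o}bner basis of $A \ast B$ is the disjoint union of those of $A$ and $B$, and then sets up exactly your ``alternative, slicker route'' recursion (with $V_1, V_2$ the spans of words \emph{beginning} in $\bar A$, $\bar B$ rather than ending there, which is symmetric) and solves the resulting $2\times 2$ linear system to obtain $H_{A\ast B} = H_A H_B / (H_A + H_B - H_A H_B)$. Your direct geometric-series summation is just an unrolled version of the same recursion.
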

\begin{proof}
Any word/monomial in $A \ast B$ is either begins with a (non-scalar) element of $A$, or an element of $B$ (excluding terms that belong to the underlying field). This follows from the fact that, because the two sets of generators have no overlap, the Gr\"{o}bner basis of $A \ast B$ will be the union of the Gr\"{o}bner basis of $A$ and the Gr\"{o}bner basis of $B$. So take any monomial in $A \ast B$. It begins with a generator either from $A$ or $B$. Without loss of generality say its from $A$. Then take the longest prefix of this word that consists of generators from $A$. This must be a word in $A$, otherwise it would contain a leading term of the Gr\"{o}bner basis for $A$, and hence a leading term in the Gr\"{o}bner basis for $A \ast B$.\\ 
So we know that words in $A \ast B$ start with a word from either $A$ or $B$. That suggests the following decomposition:
\[  H_{A \ast B} = H_{V_1} + H_{V_2} +1     \]
where $V_1$ are the elements that begin with a non-scalar element of $A$, and $V_2$ are the elements that begin with a non-scalar element of $B$. So we have
\[  H_{V_1} = (H_A -1)(H_{V_2} + 1)    \]
\[  H_{V_2} = (H_B -1)(H_{V_1} + 1).     \]
The above imply that
\[  H_{V_1}  = \frac{H_AH_B - H_B}{H_A + H_B - H_AH_B}    \]
\[  H_{V_2}  = \frac{H_AH_B - H_A}{H_A + H_B - H_AH_B}    \]
So
\[  H_{A \ast B} = \frac{H_AH_B - H_B}{H_A + H_B - H_AH_B} + \frac{H_AH_B - H_A}{H_A + H_B - H_AH_B} + 1      \]
\[  = \frac{2H_AH_B - H_A - H_B - H_AH_B + H_A + H_B}{H_A + H_B - H_AH_B}    \]
\[  = \frac{H_AH_B}{H_A + H_B - H_AH_B}.     \]
Hence
\[  (H_{A \ast B})^{-1} = H_A^{-1} + H_B^{-1} -1.   \]
\end{proof}
\newpage 
\section{Gr\"{o}bner Bases}
\subsection{Motivation}
 Let $\mathbb{K}[x_1,x_2, \dots, x_n]$ be a polynomial algebra. Let $I$ be an ideal of \\  $\mathbb{K}[x_1,x_2, \dots, x_n]$. Our aim is to study the structure of $\mathbb{K}[x_1,x_2, \dots, x_n]/I$ in a constructive way. In other words, we need to investigate given $f \in \mathbb{K}[x_1,x_2, \dots, x_n]$, whether it belongs to $I$ or not. For single variable polynomial algebra it is easy to determine as $\mathbb{K}[x]$ is an Euclidean domain, so we can perform Euclidean algorithm to know whether $f$ is in $I$ or not. But in case of multivariable polynomial algebra ($\mathbb{K}[x_1,x_2, \dots, x_n]$) it is difficult to say as $\mathbb{K}[x_1,x_2, \dots, x_n]$ is neither an Euclidean domain nor a principal ideal domain.  
\begin{lemma}
The polynomial algebra  $\mathbb{K}[x_1,x_2,\dots,x_n]$ is not a principal ideal domain for $n >1$.
\end{lemma}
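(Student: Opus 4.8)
The plan is to prove the contrapositive-style statement directly by exhibiting one ideal that is not principal; there is no need to say anything about the full ideal structure. The natural candidate is $I = (x_1,x_2) \subset \mathbb{K}[x_1,\dots,x_n]$. First I would record that $I$ is a \emph{proper} ideal: the property of having zero constant term is preserved under the operations defining an ideal (and both generators have it), so $1 \notin I$ and thus $I \neq \mathbb{K}[x_1,\dots,x_n]$. This observation is what will produce the contradiction at the end.

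Next, suppose for contradiction that $I = (f)$ for some $f \in \mathbb{K}[x_1,\dots,x_n]$. Since $x_1,x_2 \in I = (f)$, we have $f \mid x_1$ and $f \mid x_2$ in $\mathbb{K}[x_1,\dots,x_n]$, which is an integral domain (a fact I would use freely). Writing $x_1 = fg$ and grading by total degree, additivity of degree on a domain forces $\{\deg f, \deg g\} = \{0,1\}$, so either $f$ is a nonzero scalar or $f = c\,x_1$ for some $0 \neq c \in \mathbb{K}$ (the degree-$1$ factor must be a scalar multiple of $x_1$ because $x_1 = fg$ has no other linear monomials and $g$, having degree $0$, is a scalar). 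The case $f = c\,x_1$ is excluded because it would give $x_1 \mid x_2$; viewing $x_2$ as a polynomial in the variable $x_1$ over $\mathbb{K}[x_2,\dots,x_n]$, it has degree $0$, while $x_1$ has degree $1$, so no such factorization exists. Hence $f$ is a nonzero scalar, whence $(f) = \mathbb{K}[x_1,\dots,x_n]$, contradicting the properness of $I$ established above.

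The only real point requiring care — it is minor — is justifying the divisibility and irreducibility claims about $x_1$ and $x_2$ without appealing to unique factorization, and the cleanest way to do this is exactly the degree bookkeeping just described: work with the total-degree grading (or, for $x_1 \mid x_2$, the degree in a single chosen variable) and use that on an integral domain $\deg(fg) = \deg f + \deg g$, so the only divisors of a degree-one monomial are scalars and scalar multiples of that monomial. With those elementary facts in hand the argument is complete, and it simultaneously shows that $n > 1$ is essential, since for $n = 1$ the ideal $(x_1,x_2)$ collapses and $\mathbb{K}[x]$ is in fact a principal ideal domain by the Euclidean algorithm recalled in the Introduction.
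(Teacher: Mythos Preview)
Your proof is correct and follows essentially the same approach as the paper: exhibit the ideal $(x_1,x_2)$, observe that any generator $f$ would have to divide both $x_1$ and $x_2$ and hence be a constant, and then derive a contradiction from $1 \notin (x_1,x_2)$. Your version is more careful than the paper's in justifying the step ``$f$ divides $x_1$ and $x_2$, so $f$ is constant'' via explicit degree bookkeeping, but the underlying idea is identical.
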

\begin{proof}
Take an ideal generated by $\{x_1,x_2\}$.\hspace{1mm}If $f$ generates this ideal, then $f$ divides both $x_1$ and $x_2$,\hspace{1mm}so $f$ is a constant term. So our ideal must be the entire algebra. But $1$ is in the algebra,\hspace{1mm}but not in the ideal. Contradiction.
\end{proof}
To solve this problem we have the concept of Gr\"{o}bner basis which is a type of basis defined carefully which tells that if we replace our generators $f_i$ of the ideal $I$ with a Gr\"{o}bner basis $g_j$ of the same ideal then we have the property that the remainder of $f$ on division by the polynomials $g_j$ is $0$ if and only if $f$ is in the ideal. Later we also observe that monomials not divisible by leading terms of the Gr\"{o}bner basis $G$ form a basis of $\mathbb{K}[x_1,x_2, \dots, x_n]/I$, which is sufficient to describe the structure of the algebra modulo $I$. \\
So we understand that to study the structure of $ \mathbb{K}[x_1,x_2,\dots,x_n]/I$ in a constructive way we require the concept of Gr\"{o}bner bases.The original definition was given in Bruno Buchberger's PhD thesis in 1965[1]. Before moving to the core concepts of Gr\"{o}bner bases we require some preliminary tools to describe it.

\subsection{Admissible ordering, Leading terms, Leading co-efficients}

\begin{theorem}[Hilbert basis theorem]
$I \subset \mathbb{K}[x_1,x_2,\dots,x_n]$ is always finitely generated,\hspace{1mm}so there exist $f_1,f_2,\dots,f_m \in \mathbb{K}[x_1,x_2,\dots,x_n]$ such that $I=  (f_1,f_2,\dots,f_m)$.\\
\end{theorem}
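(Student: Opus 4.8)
The plan is to prove the Hilbert basis theorem by the standard Noetherian-induction argument on the number of variables, using the fact that $\mathbb{K}$ is a field and hence (trivially) Noetherian, and then bootstrapping via the theorem ``if $R$ is Noetherian then $R[x]$ is Noetherian.'' First I would reformulate the claim: saying every ideal of $\mathbb{K}[x_1,\dots,x_n]$ is finitely generated is equivalent to saying $\mathbb{K}[x_1,\dots,x_n]$ is a Noetherian ring. Since $\mathbb{K}[x_1,\dots,x_n] \cong \mathbb{K}[x_1,\dots,x_{n-1}][x_n]$, an induction on $n$ reduces everything to the single statement: if $R$ is Noetherian, so is $R[x]$. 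The base case $n=0$ is the observation that a field has only the ideals $(0)$ and $(1)$, both finitely generated.

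The heart of the argument is the inductive step, which I would carry out as follows. Let $I \subseteq R[x]$ be an ideal with $R$ Noetherian; I want a finite generating set. For each $d \geq 0$, collect the leading coefficients of the degree-$d$ elements of $I$ together with $0$; this is an ideal $L_d \subseteq R$, and $L_0 \subseteq L_1 \subseteq L_2 \subseteq \cdots$ is an ascending chain (multiplying a polynomial by $x$ shows $L_d \subseteq L_{d+1}$). Because $R$ is Noetherian, this chain stabilizes at some $N$, and each $L_d$ is finitely generated. Choose finitely many polynomials $f_{d,1},\dots,f_{d,k_d} \in I$ of degree $d$ whose leading coefficients generate $L_d$, for each $d = 0,1,\dots,N$. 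The claim is that the finite collection $\{f_{d,j} : 0 \le d \le N,\ 1 \le j \le k_d\}$ generates $I$. To verify this, take $g \in I$ of degree $m$; if $m \le N$ I can subtract an $R$-linear combination of the $f_{m,j}$ to kill the leading term of $g$, lowering its degree, and if $m > N$ I use the generators at level $N$ multiplied by suitable powers of $x$ to do the same. Iterating, $g$ is reduced to an element of lower and lower degree and eventually to $0$, expressing $g$ in terms of the chosen generators; a clean way to make this rigorous is strong induction on $\deg g$.

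The main obstacle — really the only subtle point — is making the degree-reduction step airtight, in particular handling the case $\deg g > N$ correctly and being careful that subtracting off leading terms genuinely decreases the degree (one must note that if the leading coefficient of $g$ lies in $L_m$, a combination $\sum a_j f_{m,j}$ with matching leading coefficient and degree exists, so the difference has strictly smaller degree or is zero). Everything else is bookkeeping. An alternative, more self-contained route that avoids invoking the abstract Noetherian machinery would be to prove this directly via Gröbner bases: fix an admissible monomial order (available once we develop Section 3.3), let $J = \langle \lm(f) : f \in I\rangle$ be the ideal of leading monomials, observe $J$ is a monomial ideal and invoke Dickson's lemma to get that $J$ is generated by finitely many monomials $\lm(g_1),\dots,\lm(g_m)$, then show these finitely many $g_i \in I$ generate $I$ by the division algorithm. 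Since the paper is building toward Gröbner bases anyway, I would likely present the classical proof here for self-containedness and remark that the Gröbner-basis proof gives this as a byproduct; in either case the crux is the same reduction-by-leading-terms lemma.
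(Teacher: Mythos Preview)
Your argument is correct: the Noetherian-induction proof via leading-coefficient ideals is the standard one, and the degree-reduction step you outline is exactly the right way to make it rigorous. Your alternative route through Dickson's lemma and leading-term ideals is also valid and, as you note, fits naturally with the Gr\"{o}bner-basis machinery developed later in the paper.

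However, there is nothing to compare against: the paper does not actually prove the Hilbert basis theorem. Its proof environment consists solely of the sentence ``For proof, we refer to [2],'' citing Ene and Herzog's textbook. So your proposal is not merely a different route---it is strictly more than what the paper provides. Either of your two approaches would serve as a genuine proof where the paper has only a citation; the Gr\"{o}bner-basis version has the added virtue of being self-contained within the paper's own framework once Section~3.3 is in place, whereas the Noetherian argument is logically prior and independent of monomial orders.
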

\begin{proof}
For proof, we refer to [2].
\end{proof}

\begin{definition}
An \textbf{admissible ordering}\hspace{1mm} $`<'$\hspace{1mm} of monomials is a total ordering of all monomials in $\mathbb{K}[x_1,x_2,\dots,x_n]$ such that
\begin{itemize}
\item it is a well ordering. Equivalently we say that there is no infinite decreasing sequence.
\item $m_1 < m_2 \Rightarrow m_1m_3 < m_2m_3 $ for any monomial $m_3$,\hspace{1mm}where $m_1,m_2 \in \mathbb{K}[x_1,x_2,\dots,x_n]$.
\end{itemize}
\end{definition}

\begin{lemma}
There is only one admissible ordering of monomials  in $\mathbb{K}[x]$ i.e.\\
\[ x^k < x^l\hspace{2mm} \text{if and only if}\hspace{2mm} k < l\]
\end{lemma}
\begin{proof}
This is easy to verify. We take an ordering $1 < x$ which implies $x < x^2$, $x^2 < x^3$, $\dots$. so we get an well ordering. Also take $x^m < x^l$ for some $m, l \in \mathbb{N}$, then for any $x^k$ for some $k$, we always have $x^{m+k} < x^{l+k}$. These all together imply that the ordering we choose is admissible. Now suppose we take $x < 1$, which implies $x^2< x$, $x^3 < x^2$, $\dots$,\hspace{1mm}then this implies existence of an infinite decreasing sequence,\hspace{1mm}hence contradiction.
\end{proof}
%\begin{remark} 
%For $n \geq 2$,\hspace{1mm} there are infinitely many admissible orderings. 
%\end{remark}
\begin{example}
\textbf{LEX}(lexicographic ordering),
\[ x_1^{i_1}x_2^{i_2}x_3^{i_3}\dots x_n^{i_n} < x_1^{j_1}x_2^{j_2}x_3^{j_3}\dots x_n^{j_n} \]
if
\[ i_1 < j_1 \hspace{3mm}\text{or} \hspace{1mm} \]
 \[  i_1=j_1 , i_2 < j_2 \hspace{3mm}\text{or} \hspace{1mm} \]
 \[  i_1=j_1 , i_2=j_2 , i_3 < j_3 \hspace{3mm} \text{or} \hspace{1mm}\]
  \[ \vdots \]
\end{example}
\begin{example}
\textbf{DEGLEX}(degree-lexicographic ordering),\\
a little difference with \textbf{LEX} is that here first we consider the degree then the \textbf{LEX} ordering.
\[ x_1^{i_1}x_2^{i_2}x_3^{i_3}\dots x_n^{i_n} < x_1^{j_1}x_2^{j_2}x_3^{j_3}\dots x_n^{j_n}  \]
if
\[ i_1+i_2+i_3+\dots+i_n < j_1+j_2+j_3+\dots+j_n \hspace{3mm} \text{or}\]
\[ i_1+i_2+i_3+\dots+i_n = j_1+j_2+j_3+\dots+j_n \hspace{3mm} \text{and}\]
\[ x_1^{i_1}x_2^{i_2}x_3^{i_3}\dots x_n^{i_n}\hspace{1mm} <_\textbf{LEX}\hspace{1mm} x_1^{j_1}x_2^{j_2}x_3^{j_3}\dots x_n^{j_n}.\]
\end{example}
\newpage 
Let us fix an admissible ordering. Let $I \subset \mathbb{K}[x_1,x_2,\dots,x_n]$ be an ideal. 
\begin{definition}
By leading term of $I$ ( we usually denote it by $\lt(I)$) we mean space of linear combinations of monomials $m$ over $\mathbb{K}$ which are leading terms of elements of $I$. We say $m \in \lt(I)$ if there exists $f \in I$ such that
\[  f = cm + \sum c_im_i, \]
where $m_i$'s are monomials with $m_i < m$ and $c_i , c \in \mathbb{K}$ with $c \neq 0$.
\end{definition}
\begin{definition}
By $\lt(f)$, we denote the leading monomial in $f$. 
\end{definition}
\begin{example}
Let us fix DEGLEX ordering with $x > y$. Let $f$ be $x^3-xy^2$. Then according to the ordering $\lt(f) = x^3$. Now let us take the same $f$ and now we fix the DEGLEX order with $y>x$, then $\lt(f) = xy^2$.
\end{example}
\begin{definition}
By $\lc(f)$ (leading co-efficient), we denote the co-efficient of the leading monomial in $f$.
\end{definition}
\begin{example}
Let us consider the same $f$ from example 3.2.3. In case of ordering $x>y$, $\lc(f) = 1$. In case of ordering $y>x$, $\lc(f) = -1$.  
\end{example}
\begin{lemma}
$\lt(I)$ is itself an ideal in $\mathbb{K}[x_1,x_2,\dots,x_n]$.
\end{lemma}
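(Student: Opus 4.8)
The plan is to verify directly that $\lt(I)$ satisfies the two closure conditions in the definition of an ideal: closure under subtraction and closure under multiplication by arbitrary elements of $\mathbb{K}[x_1,\dots,x_n]$. Since $\lt(I)$ is, by construction, the $\mathbb{K}$-span of the set of leading monomials of elements of $I$, it is automatically a subspace; the only real content is that multiplying a leading monomial of some $f \in I$ by an arbitrary monomial again yields a leading monomial of some element of $I$. The key tool is the second axiom of an admissible ordering, namely that $m_1 < m_2$ implies $m_1 m_3 < m_2 m_3$, which guarantees that multiplication by a monomial preserves which term is leading.

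First I would take a monomial $m \in \lt(I)$, so there is $f \in I$ with $f = cm + \sum_i c_i m_i$ where $c \neq 0$ and each $m_i < m$. Given any monomial $m'$, consider $m' f \in I$ (using that $I$ is an ideal of $\mathbb{K}[x_1,\dots,x_n]$). Then $m' f = c\,(m'm) + \sum_i c_i\,(m' m_i)$, and by the admissibility axiom $m' m_i < m' m$ for every $i$. Hence $m' m$ is the leading monomial of the element $m' f \in I$, so $m' m \in \lt(I)$. By linearity this extends to multiplication of any element of $\lt(I)$ by any polynomial in $\mathbb{K}[x_1,\dots,x_n]$.

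Next I would check closure under subtraction. Let $g, h \in \lt(I)$; writing them as $\mathbb{K}$-linear combinations of leading monomials of elements of $I$, the difference $g - h$ is again such a linear combination, so $g - h \in \lt(I)$ by definition of $\lt(I)$ as a span. Combining this with the previous paragraph shows $\lt(I)$ is an ideal.

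The main subtlety — and the only place one must be slightly careful — is the interaction between the span structure and the multiplication axiom: a priori one might worry that a $\mathbb{K}$-linear combination of leading monomials need not itself be the leading monomial of a single element of $I$, but this is precisely why $\lt(I)$ is \emph{defined} as the span rather than just the set of leading monomials, so no cancellation phenomenon can obstruct the argument. One should also note a degenerate case: if $m' m$ happens to coincide with some $m' m_i$ for $i$ in a different element's expansion, this causes no problem because we only ever compare terms \emph{within} the single element $m' f$, where distinctness of the $m_i$ is inherited from distinctness of the $m_i$ in $f$. Thus the proof is essentially a one-line application of the admissibility of the ordering, once the definitions are unpacked.
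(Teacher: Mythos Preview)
Your proof is correct and follows essentially the same approach as the paper: both pick a leading monomial $m$ with witness $f\in I$, multiply by an arbitrary monomial, and use the admissibility axiom $m_i<m \Rightarrow m'm_i<m'm$ to conclude that $m'm$ is the leading monomial of $m'f\in I$. The paper omits your explicit check of closure under subtraction (since $\lt(I)$ is defined as a span this is immediate), but otherwise the arguments are identical.
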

\begin{proof}
Let $m \in \lt(I)$ be a monomial. Then according to the definition 3.2.2 there exists $f \in I$ such that $f = cm + \sum c_im_i$ with $m_i < m$ for all $i$ and $c_i , c \in \mathbb{K}$, $c \neq 0$. We form $m' = m''m$. Multiplying $m''$ with $f$ gives,
\[  m''f = cm''m + \sum c_i m''m_i.  \]
As $I$ is an ideal hence $m''f \in I$. $m_i < m$ implies $m''m_i < m''m$ for all $i$. These together imply that $m''m$ is the leading monomial of $m''f$. Hence $m' \in \lt(I)$.
\end{proof}

\begin{lemma}
Cosets of monomials $m \notin \lt(I)$ form a basis in $R = \mathbb{K}[x_1,x_2,\dots,x_n]/I$.
\end{lemma}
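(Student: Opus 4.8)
The plan is to check separately the two conditions that make a set a basis of $R$: that the cosets $\{\,m + I : m \notin \lt(I)\,\}$ are linearly independent over $\mathbb{K}$, and that they span $R$.

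Linear independence is the short half, and it is essentially a restatement of Definition 3.2.2. Suppose a finite linear combination $f := \sum_i c_i m_i$, with the $m_i$ distinct monomials none of which lies in $\lt(I)$ and with scalars $c_i \in \mathbb{K}$ not all zero, maps to the zero coset in $R$, i.e. $f \in I$. Dropping the zero coefficients we may assume every $c_i \neq 0$, so $f \neq 0$; let $m$ be the largest among the $m_i$ for the fixed admissible ordering. Then $f = c\,m + \sum(\text{terms} < m)$ with $c \neq 0$, so $m \in \lt(I)$ by definition, contradicting $m \notin \lt(I)$. Hence no nontrivial such combination lies in $I$, and the cosets are independent.

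For spanning I would use that the admissible ordering is a well ordering and argue by a minimal counterexample. Let $S$ be the set of $f \in \mathbb{K}[x_1,\dots,x_n]$ such that $f + I$ is \emph{not} a $\mathbb{K}$-linear combination of cosets of monomials outside $\lt(I)$; note $0 \notin S$. If $S \neq \emptyset$, the monomials $\{\lt(f) : f \in S\}$ form a nonempty set and hence have a least element, so pick $f \in S$ with $\lt(f)$ minimal, and write $f = c\,\lt(f) + (\text{strictly smaller terms})$, $c \neq 0$. If $\lt(f) \notin \lt(I)$, set $f' = f - c\,\lt(f)$; then $\lt(f') < \lt(f)$ (or $f' = 0$), so $f' \notin S$, hence $f' + I$ — and therefore $f + I = (f' + I) + c(\lt(f) + I)$ — is a combination of the allowed cosets, contradicting $f \in S$. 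If instead $\lt(f) \in \lt(I)$, then by Definition 3.2.2 there is $g \in I$ whose leading monomial is $\lt(f)$ with leading coefficient $\lc(g)$; put $f' = f - (c/\lc(g))\,g \in f + I$. Again $\lt(f') < \lt(f)$ (or $f' = 0$), so $f' \notin S$ and $f' + I = f + I$ is a combination of the allowed cosets, once more contradicting $f \in S$. Therefore $S = \emptyset$, which is exactly the spanning statement.

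The real content sits in the spanning argument; the delicate points are that every reduction step strictly lowers the leading monomial and that the well ordering rules out an infinite descent — the minimal-counterexample phrasing packages both cleanly, so no explicit termination bound is needed. Linear independence, by contrast, drops straight out of the definition of $\lt(I)$, and no analogue of Diamond's lemma or of an actual Gr\"{o}bner basis is required for either half: this lemma holds for an arbitrary ideal $I$.
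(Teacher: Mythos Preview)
Your proof is correct and follows essentially the same approach as the paper: linear independence comes directly from the definition of $\lt(I)$, and spanning is proved by a minimal-counterexample argument using the well-ordering. The only cosmetic difference is that the paper reduces immediately to showing each monomial $m\in\lt(I)$ lies in the span (so only your second case arises), whereas you run the minimal-counterexample argument over arbitrary polynomials and split into the two cases $\lt(f)\notin\lt(I)$ and $\lt(f)\in\lt(I)$; your version is slightly more explicit but the content is the same.
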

\begin{proof}
Let us first prove the linear independence. Let $m_1,m_2,\dots,m_l \notin \lt(I)$,\hspace{1mm}without loss of generality assume $m_1 < m_2 < \dots < m_l$ \hspace{1mm}(where $<$ is our fixed admissible ordering),\hspace{1mm}then we have $c_1m_1 + c_2m_2 +\dots + c_lm_l = 0$ \hspace{1mm} in $R$,\hspace{1mm}where $c_i 's \in \mathbb{K}$. Let $f = c_1m_1 + c_2m_2 +\dots + c_lm_l \in I $. Then $\lt(f) \in \lt(I)$,\hspace{1mm} a contradiction unless $f = 0$ implies $c_1 = c_2 = \dots = c_l=0 $.\\
Next we need to check the spanning set property i.e. we need to show that if $m \in \lt(I)$,\hspace{1mm} then $m$ is a linear combination in $R$ of cosets of monomials not present in $\lt(I)$. We will prove this with the help of contradiction. Let's take the smallest $m \in \lt(I)$ for which such a combination doesn't exist. Now by definition, there exists $f \in I$,\hspace{1mm} such that
$0 = f = cm + \sum c_im_i$,\hspace{1mm}with $m_i < m$,\hspace{1mm}and each of $m_i's$ is not in $\lt(I)$. Then we have $m = -\sum (c_i/c)m_i$ which can be represented as a combination of cosets of elements outside $\lt(I)$. Contradiction.
\end{proof}
\subsection{Gr\"{o}bner bases and Diamond lemma}
\begin{definition}
A subset $G \subset I$ is called a \textbf{Gr\"{o}bner basis} of $I$ if $\{\lt(g) |\hspace{1mm}g \in G \}$ generate the ideal $\lt(I)$ i.e. for each $f \in I$,\hspace{1mm}$\lt(f)$ is divisible by $\lt(g)$ for some $g \in G$.
\end{definition}
\begin{lemma}
$(G) = I$.
\end{lemma}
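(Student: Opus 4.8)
The plan is to prove the two inclusions $(G) \subseteq I$ and $I \subseteq (G)$ separately, where $(G)$ denotes the ideal generated by $G$ in $\mathbb{K}[x_1,\dots,x_n]$. The first inclusion is immediate: by Definition 3.3.1, $G$ is a subset of $I$, and since $I$ is an ideal, any ideal generated by a subset of $I$ is contained in $I$; hence $(G) \subseteq I$.

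For the reverse inclusion $I \subseteq (G)$, the natural approach is a descending-induction (or minimal-counterexample) argument on the leading term, exactly in the style used in the proof of Lemma 3.2.4. Suppose, for contradiction, that $I \setminus (G) \neq \emptyset$, and among all elements of $I \setminus (G)$ choose one, call it $f$, whose leading monomial $\lt(f)$ is smallest with respect to the fixed admissible ordering; this is legitimate because an admissible ordering is a well-ordering. Since $f \in I$, we have $\lt(f) \in \lt(I)$, so by the defining property of a Gr\"{o}bner basis there exists $g \in G$ with $\lt(g)$ dividing $\lt(f)$, say $\lt(f) = m\,\lt(g)$ for some monomial $m$. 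Form $f' = f - \dfrac{\lc(f)}{\lc(g)}\, m\, g$. Then $f' \in I$ because $f \in I$ and $mg \in (G) \subseteq I$, and the construction cancels the leading term, so either $f' = 0$ or $\lt(f') < \lt(f)$.

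In the case $f' = 0$ we get $f = \dfrac{\lc(f)}{\lc(g)}\, m\, g \in (G)$, contradicting $f \notin (G)$. In the case $\lt(f') < \lt(f)$, minimality of $\lt(f)$ forces $f' \in (G)$; but then $f = f' + \dfrac{\lc(f)}{\lc(g)}\, m\, g \in (G)$, again a contradiction. Either way we contradict the choice of $f$, so $I \setminus (G) = \emptyset$, i.e.\ $I \subseteq (G)$. Combining the two inclusions gives $(G) = I$.

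The only subtle point — and the one worth stating carefully — is that the reduction process terminates, which is precisely what the well-ordering hypothesis on the admissible ordering guarantees: we cannot repeatedly replace $f$ by a smaller-leading-term element indefinitely, so phrasing the argument as a single minimal-counterexample step (rather than an explicit iteration) sidesteps any need to track the whole reduction chain. No genuine obstacle remains; the proof is a direct application of the Gr\"{o}bner basis definition together with well-foundedness of the monomial order, mirroring Lemma 3.2.4.
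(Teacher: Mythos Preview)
Your proof is correct and follows essentially the same approach as the paper: take a minimal counterexample $f \in I \setminus (G)$, use the Gr\"{o}bner basis property to find $g \in G$ with $\lt(g) \mid \lt(f)$, subtract the appropriate multiple of $g$ to lower the leading term, and derive a contradiction from minimality. Your version is slightly more careful in separating the $f'=0$ case and in making the role of the well-ordering explicit, but the argument is the same.
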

\begin{proof}
We know that $(G) \subset I$, assume that $(G) \neq I$. Let $f \in I\backslash(G)$ with smallest possible leading monomial. Then $\lt(f)= m \lt(g)$ for some $g \in G$,\hspace{1mm} $m \in I$. Let $\hat{f} = f - (\lt(f)/\lt(g))mg$. Then we have $\lt(\hat(f)) < \lt(f)$,\hspace{1mm}it implies $\hat{f} \in I \hspace{2mm} \text{ which implies}\hspace{2mm} \hat{f} \in (G)$,\hspace{1mm}then 
\[f = \hat{f} + (\lc(f)/\lc(g) )mg \in (G),\]
which is a contradiction.
\end{proof}
\begin{remark}
We have already proved that monomials not divisible by $\lt(G)$ form a basis of  $\mathbb{K}[x_1,x_2,\dots,x_n]/I$.
\end{remark}
\begin{definition}
 Given a Gr\"{o}bner basis $G \subset I$,\hspace{1mm}\textbf{normal monomials} with respect to $G$ are those monomials which are not divisible by $\lt(g)$,\hspace{1mm}for $g \in G$.\\
 We sometime call normal monomials as \textbf{normal words}. The space spanned by the normal words is usually denoted by $N$.
\end{definition}
\begin{lemma}
Cosets of normal monomials form a basis of \\  $\mathbb{K}[x_1,x_2,\dots,x_n]/I$.
\end{lemma}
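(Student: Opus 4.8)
The plan is to deduce this directly from the machinery already assembled, essentially repackaging Lemma~3.2.2 and Lemma~3.3.1 together. First I would observe that by Lemma~3.3.1 we have $(G) = I$, so the quotient $\mathbb{K}[x_1,x_2,\dots,x_n]/I$ in the statement is the same object as $\mathbb{K}[x_1,x_2,\dots,x_n]/(G)$, and there is no ambiguity in speaking of its cosets. Next I would unwind the definitions: a monomial $m$ is \emph{normal} with respect to $G$ (Definition~3.3.2) exactly when $m$ is not divisible by $\lt(g)$ for any $g \in G$. By the definition of a Gr\"obner basis (Definition~3.3.1), the leading terms $\{\lt(g) : g \in G\}$ generate the ideal $\lt(I)$, so a monomial $m$ fails to be divisible by any $\lt(g)$ precisely when $m \notin \lt(I)$. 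Hence ``normal monomial with respect to $G$'' and ``monomial not in $\lt(I)$'' describe the same set of monomials.

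Once that identification is in place, the lemma is immediate from Lemma~3.2.2, which already asserts that the cosets of monomials $m \notin \lt(I)$ form a basis of $R = \mathbb{K}[x_1,x_2,\dots,x_n]/I$: the two statements are literally the same statement phrased in two vocabularies. I would therefore structure the proof as (i) reduce to $\mathbb{K}[x_1,x_2,\dots,x_n]/(G)$ using Lemma~3.3.1; (ii) show the set of normal monomials equals the set of monomials outside $\lt(I)$, citing Definition~3.3.1 for one inclusion (if $m$ is normal then no $\lt(g)$ divides $m$, and since the $\lt(g)$ generate $\lt(I)$, no monomial of $\lt(I)$ divides $m$, in particular $m \notin \lt(I)$) and the trivial direction for the other (if $m \in \lt(I)$ then some generator $\lt(g)$ of the monomial ideal $\lt(I)$ divides $m$, so $m$ is not normal); (iii) invoke Lemma~3.2.2.

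The only point requiring any care — and the closest thing to an ``obstacle'' — is step (ii), and specifically the claim that a monomial lies in the monomial ideal $\lt(I)$ if and only if it is divisible by one of the chosen generators $\lt(g)$. This is the standard fact that membership of a monomial in a monomial ideal is detected by divisibility by a generating set of monomials; it uses that $\lt(I)$ is genuinely a monomial ideal (Lemma~3.2.1) and that the $\lt(g)$ form a generating set of monomials for it (Definition~3.3.1). I would state this explicitly but not belabor it, since it is a one-line consequence of expanding an element of the ideal as $\sum h_i \lt(g_i)$ and comparing monomials. With that in hand the lemma follows with essentially no further computation.
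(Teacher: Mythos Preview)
Your proposal is correct and follows essentially the same approach as the paper, which proves the lemma in one line by citing the earlier result that cosets of monomials $m \notin \lt(I)$ form a basis (this is Lemma~3.2.3 in the paper's numbering, not Lemma~3.2.2 as you write; your references are off by one). Your write-up is in fact more careful than the paper's, since you explicitly justify the identification ``normal monomial $=$ monomial not in $\lt(I)$'' via the monomial-ideal membership criterion, whereas the paper leaves that step implicit.
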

\begin{proof}
This is proved earlier in form of lemma 3.2.3
\end{proof}
We are now going to define \textit{reduction} and \textit{S-polynomial} for the commutative case. These two definitions play an important role for computation of Gr\"{o}bner bases.
\begin{definition}
Suppose $f_1$,$f_2 \in \mathbb{K}[x_1,x_2, \dots, x_n]$,\hspace{1mm}and there exists a monomial $m$ with
\[ \lt(f_1) = m \lt(f_2).\]
Then
\begin{equation}
R_{f_2}(f_1) = f_1 - \frac{\lc(f_1)}{\lc(f_2)}mf_2
\end{equation}
is called \textbf{reduction} of $f_1$ with respect to $f_2$.
\end{definition}
\begin{definition}
Suppose we have two polynomials $f_1$ and \\  $f_2$ $\in \mathbb{K}[x_1,x_2,\dots,x_n]$,\hspace{1mm}suppose there exist monomials $m_1$,$m_2$ such that\\
\begin{equation} m_1\lt(f_2) = m_2\lt(f_1) \hspace{2mm}\text{and}\hspace{2mm} \deg(m_1) < \deg \lt(f_1) 
\end{equation}
then
\begin{equation}
 S(f_1,f_2) = \frac{1}{\lc(f_1)}m_2f_1 - \frac{1}{\lc(f_2)}m_1f_2
\end{equation}
is called \textbf{S-polynomial} with respect to a small common multiple (4).
\end{definition}
\begin{example}
Let us give an example to show how reduction and S-polynomial work, suppose we have a commutative polynomial algebra in 2 variable i.e. $\mathbb{K}[x,y]$.\hspace{1mm}We will pick \textbf{DEGLEX} ordering. Now let $f_1 = x^3 - y^2$ and $f_2 = x^3 -x +1$.\hspace{1mm}Then one can see for both $f_1$ and $f_2$, $\lm$ is $x^3$. So $\lt(f_1) = \lt(f_2) = x^3$ implies $m = 1$ so that $x^3 = 1\cdot x^3$,\hspace{1mm}our reduction of $f_1$ w.r.t $f_2$ will be then $x^3-y^2-(x^3-x+1) = x-y^2-1$.\\
And while computing S-polynomial with respect to a small common multiple we have 3 choices,
\[ 1 \cdot x^3 = 1 \cdot x^3\]
\[ x \cdot x^3 = x \cdot x^3 \]
\[ x^2 \cdot x^3 = x^2 \cdot x^3\]
hence for each cases we can compute the S-polynomial using our formula (5). For an example if we consider $x \cdot x^3 = x \cdot x^3  $ then our S-polynomial will be $x^2(x^3-y^2)-x^2(x^3-x+1) = x^3 - x^2y^2- x^2$.
\end{example}
\begin{definition}
A polynomial $f$ can be reduced to \textbf{$0$ modulo $G$},\hspace{1mm}if there exists $g_1,g_2,\dots,g_n \in G$ such that
\[ R_{g_m}(\dots\dots R_{g_2}(R_{g_1}(f))\dots) = 0\]
\end{definition}
\begin{lemma}
\textbf{Diamond lemma}:\\
A subset $G \subset I$ forms a Gr\"{o}bner basis if and only if for each $g_1,g_2 \in G$\\
$R_{g_2}(g_1)$ (if defined) can be reduced to $0$ modulo $G$.\\
And also for each $g_1,g_2 \in G$ and each small common multiple of\hspace{1mm} $\lt(g_1),\lt(g_2)$;\hspace{1mm}the corresponding S-polynomial can be reduced to $0$ modulo $G$.
\end{lemma}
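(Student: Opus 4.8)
The plan is to prove the Diamond lemma in the standard Buchberger style, by showing that the obstruction to $G$ being a Gröbner basis lives entirely in the S-polynomials and reductions between pairs of generators. The forward direction is the easy one: if $G$ is a Gröbner basis of $I$, then every $R_{g_2}(g_1)$ and every S-polynomial $S(g_1,g_2)$ is an element of $I$ (it is an $A$-combination of $g_1,g_2\in G\subset I$), so by Lemma 3.3.1 and the divisibility characterization of a Gröbner basis its leading term is divisible by some $\lt(g)$, $g\in G$; performing that reduction strictly decreases the leading monomial, and since the admissible ordering is a well ordering this process terminates, necessarily at $0$. So the first paragraph of the proof just unwinds the definitions and invokes well-foundedness.

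For the converse I would argue by contradiction. Suppose all the pairwise reductions and S-polynomials reduce to $0$ modulo $G$, but $G$ is not a Gröbner basis; then there is $f\in I$ whose leading monomial $\lt(f)$ is \emph{not} divisible by any $\lt(g)$, $g\in G$. Since $(G)=I$ (Lemma 3.3.1), write $f=\sum_j h_j g_j$ with $g_j\in G$, and let $M=\max_j \lt(h_j)\lt(g_j)$ with respect to the fixed admissible ordering. Among all such representations of $f$, choose one with $M$ minimal (possible by well-ordering). Because $\lt(f)$ is not divisible by any $\lt(g_j)$, the monomial $M$ must be strictly larger than $\lt(f)$, so the top-degree-$M$ parts of the terms $h_jg_j$ must cancel. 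The key algebraic step is the classical Buchberger cancellation lemma: a sum of terms $c_j\, m_j g_j$ all having the \emph{same} leading monomial $M$, which sums to something of lower leading monomial, can be rewritten as a $\mathbb{K}$-linear combination of the S-polynomials $S(g_i,g_j)$ (with appropriate monomial multipliers) together with lower-order terms. Feeding in the hypothesis that each $S(g_i,g_j)$ reduces to $0$ modulo $G$ — i.e.\ equals an $A$-combination $\sum_k h'_k g_k$ with all $\lt(h'_k)\lt(g_k) < M$ — lets us produce a new representation $f=\sum_j \tilde h_j g_j$ with strictly smaller $M$, contradicting minimality. Hence no such $f$ exists and $G$ is a Gröbner basis.

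Two bookkeeping points deserve care. First, the statement bundles together the ``$R_{g_2}(g_1)$ reduces to $0$'' condition and the ``every S-polynomial reduces to $0$'' condition; the reduction $R_{g_2}(g_1)$ is really the degenerate case of an S-polynomial where $\lt(g_1)$ is divisible by $\lt(g_2)$ (so one of the monomial multipliers is trivial), and both should be handled uniformly by the cancellation lemma — I would remark on this rather than treat it twice. Second, one must check that ``reduces to $0$ modulo $G$'' genuinely yields a representation with leading-monomial control: each reduction step $R_{g_k}$ replaces the current polynomial by one with strictly smaller leading monomial and only introduces a term $\lambda\, m\, g_k$ with $m\,\lt(g_k)$ equal to the old leading monomial, so an easy induction on the (terminating, by well-ordering) reduction chain gives the bound $\lt(h'_k)\lt(g_k) < M$ needed above.

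\paragraph{Main obstacle.} The crux is the cancellation lemma — the combinatorial statement that if several terms share the same leading monomial $M$ and their sum drops below $M$, the difference is accounted for by S-polynomials. In the commutative setting this is a telescoping-sum argument over pairs $(g_i,g_j)$; the care needed is to match the monomial multipliers in the telescoping with the ``small common multiple'' bound $\deg(m_1)<\deg\lt(f_1)$ in Definition 3.3.5, so that the S-polynomials that appear are exactly the ones the hypothesis controls. Everything else is definition-chasing plus repeated appeals to the well-ordering property of the admissible order.
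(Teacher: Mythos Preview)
Your approach is correct and structurally the same as the paper's: the forward direction is identical, and for the converse both you and the paper pick a representation $f=\sum h_jg_j$ with minimal $M=\max\lt(h_jg_j)$, observe that the top-degree contributions must cancel, and rewrite that cancellation using the pairwise reductions and S-polynomials to lower $M$. The paper packages the converse slightly differently --- it routes through the auxiliary condition (iii) of Lemma~3.3.4 (existence of a representation with $\lt(f)=\max\lt(h_ig_i)$) and proves (ii)$\Rightarrow$(iii)$\Rightarrow$(i) --- but the engine is the same minimal-$M$ argument you sketch.

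The one place the paper is more explicit than you is precisely your ``main obstacle''. Rather than invoking a telescoping cancellation lemma as a black box, the paper takes two of the $k$ top terms $h_1g_1,h_2g_2$ and splits into three cases on $\lt(g_1),\lt(g_2)$: (a) one divides the other (use the reduction $R_{g_2}(g_1)$), (b) they have a small common multiple in the sense of Definition~3.3.4 (use the corresponding S-polynomial), and (c) they are coprime. Case (c) is exactly the situation you worry about --- no S-polynomial with the required degree bound exists --- and the paper handles it by rewriting $\lt(g_2)eg_1$ via $\lt(g_2)=(g_2-\bar g_2)/\lc(g_2)$, which drops the top term without invoking any S-polynomial at all. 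So the resolution of your flagged obstacle is not a delicate matching of multipliers but rather a separate, direct argument for the coprime case.

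One small correction: in your converse you cite Lemma~3.3.1 for $(G)=I$, but that lemma \emph{assumes} $G$ is a Gr\"obner basis, which is what you are proving. You should instead take $(G)=I$ as a standing hypothesis, exactly as the paper does in Lemma~3.3.4.
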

We write another lemma which is equivalent to the \textit{diamond lemma}.
\begin{lemma}
Assume $(G) = I$,\hspace{1mm}then the following statements are equivalent:\\
(i) $G$ is a Gr\"{o}bner basis of $I$.\\
(ii) All reductions and all S-polynomials of pair of elements of $G$ can be reduced to $0$ modulo $G$.\\
(iii) For every $f \in I$,\hspace{1mm}$f$ admits a representation\\
\[ f=h_1g_1 + h_2g_2+\dots+h_ng_n \hspace{1mm};\hspace{1mm} g_i \in G\]
with \[ \lt(f) = \max(\lt(h_ig_i))\]
\end{lemma}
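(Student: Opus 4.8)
The plan is to prove the equivalence of (i), (ii), (iii) by establishing the cycle of implications (i) $\Rightarrow$ (iii) $\Rightarrow$ (ii) $\Rightarrow$ (i), with the understanding that (ii) is exactly the content of the Diamond lemma, so the step (ii) $\Rightarrow$ (i) is essentially a restatement of the nontrivial direction of Lemma 3.3.3. I would begin by recalling the standing assumption $(G) = I$, which lets me freely write any $f \in I$ as an algebra combination $\sum h_i g_i$; the subtlety throughout is controlling leading terms under such rewriting.

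For (i) $\Rightarrow$ (iii): given $f \in I$, I would argue by Noetherian induction on $\lt(f)$ with respect to the admissible (well-)ordering. Since $G$ is a Gr\"obner basis, $\lt(f)$ is divisible by some $\lt(g_1)$, say $\lt(f) = m_1 \lt(g_1)$; set $f' = f - \frac{\lc(f)}{\lc(g_1)} m_1 g_1$, so $f' \in I$ and $\lt(f') < \lt(f)$. By the induction hypothesis $f'$ has a representation $\sum_{i \geq 2} h_i g_i$ with $\lt(f') = \max_i \lt(h_i g_i)$; adding back the term $\frac{\lc(f)}{\lc(g_1)} m_1 g_1$ and checking that its leading term $\lt(f)$ dominates all the others (it dominates $\lt(f') \geq \lt(h_i g_i)$) gives the required representation for $f$. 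The base case is $f = 0$, which is vacuous.

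For (iii) $\Rightarrow$ (ii): I need to show that every reduction $R_{g_2}(g_1)$ and every S-polynomial $S(g_1,g_2)$ reduces to $0$ modulo $G$. Both of these lie in $I$ (they are algebra combinations of elements of $G$), so by (iii) each admits a representation $h = \sum h_i g_i$ with $\lt(h) = \max_i \lt(h_i g_i)$. From such a "fair" representation I would extract an explicit reduction sequence: the term $h_j g_j$ realizing the maximum has $\lt(h_j) \lt(g_j) = \lt(h)$, so one reduction step $R_{g_j}$ applied to $h$ strictly lowers the leading term while staying in $I$; iterating and invoking the well-ordering forces termination at $0$. This "representation with controlled leading term implies reducibility to zero" argument is the technical heart.

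Finally (ii) $\Rightarrow$ (i) is precisely the forward direction of the Diamond lemma (Lemma 3.3.3), which I am entitled to assume: if all reductions and S-polynomials reduce to $0$ modulo $G$, then $G$ is a Gr\"obner basis. I expect the main obstacle to be the (iii) $\Rightarrow$ (ii) step, specifically verifying that an S-polynomial — which is built precisely to cancel leading terms — still admits a representation whose maximal $\lt(h_i g_i)$ can be peeled off by a genuine reduction step; one must be careful that the "small common multiple" condition $\deg(m_1) < \deg \lt(f_1)$ guarantees the S-polynomial is a nontrivial element whose every fair representation has maximal term strictly below the cancelled common multiple, so the induction on the ordering does not stall. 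The rest is bookkeeping with the admissible ordering and the fact that $<$ is a well ordering, which rules out infinite descending reduction chains.
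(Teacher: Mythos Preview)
Your cycle (i) $\Rightarrow$ (iii) $\Rightarrow$ (ii) $\Rightarrow$ (i) runs in the opposite direction from the paper's (i) $\Rightarrow$ (ii) $\Rightarrow$ (iii) $\Rightarrow$ (i), and the two implications you actually argue, (i) $\Rightarrow$ (iii) and (iii) $\Rightarrow$ (ii), are correct and correspond to the easy directions the paper also dispatches quickly. The problem is the closing step: you invoke Lemma~3.3.3 for (ii) $\Rightarrow$ (i), but in the paper's architecture Lemma~3.3.3 is only \emph{stated}, and Lemma~3.3.4 is introduced precisely as the equivalent reformulation through which the Diamond lemma is actually proved. Citing 3.3.3 to finish 3.3.4 is therefore circular; you have offloaded the only nontrivial implication.

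What the paper does that you are missing is the direct argument (ii) $\Rightarrow$ (iii): start from an arbitrary representation $f = \sum h_i g_i$ with $\lt(f) < \max_i \lt(h_i g_i)$, look at the (at least two) indices realizing the maximum, and show that this pair can be rewritten to lower either the maximum or the number of terms attaining it. This requires a three-case analysis on how $\lt(g_1)$ and $\lt(g_2)$ interact (one divides the other; they share a nontrivial small common multiple; they are coprime), and it is exactly here that the hypothesis on reductions and S-polynomials reducing to $0$ is consumed. Your proposal never touches this mechanism, so as a proof of the lemma in the intended sense it has a genuine gap.
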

\begin{proof}
We need to show that
\[ (i) \hspace{2mm}\text{implies}\hspace{2mm}(ii) \hspace{2mm}\text{implies}\hspace{2mm}(iii)\hspace{2mm}\text{implies}\hspace{2mm} (i) \]
The implication of (iii) to (i) is trivial from the definition of Gr\"{o}bner bases as for each $f \in I$, $\lt(f) = \max(\lt(h_ig_i))$ of (iii) implies that $\lt(f)$ is divisible by $\lt(g_i)$ for some $g_i \in G$ which exactly the definition of Gr\"{o}bner basis tells. We will at first prove (i) implies (ii).\\
Suppose $G$ is a Gr\"{o}bner basis, then every $f \in I$ can be reduced to $0$ modulo $G$, ( $f \in I \Rightarrow \lt(f) = m\lt(g)$ for some $g \in G$,\hspace{1mm}then $R_g(f)$ has smaller leading term and we proceed with that and proceed further until get $0$. Similarly for $f \in I$ we have $m_1\lt(f)=m_2\lt(g)$ for some $g \in G$,\hspace{1mm}with $\deg(m_1) < \deg \lt(g)$,\hspace{1mm}so we have $S(f,g)$ with smaller leading term and proceed like this until get $0$.). This is exactly what statement (2) says.\\
Next we will show (ii) implies (iii).\\
As $I = (G)$,\hspace{1mm}lets take $f \in I$,
\[ f=h_1g_1 + h_2g_2 + \dots + h_ng_n; \hspace{2mm}\text{with}\hspace{1mm} \lt(f) < \max(\lt(h_ig_i)). \]
Our main objective is to show how to replace this combination by another one with smaller $\max(\lt(h_ig_i))$,\hspace{1mm}if still bigger than $\lt(f)$,\hspace{1mm}continue until it becomes $\lt(f)$. Without loss of generality,
\[ \lt(h_1g_1) = \lt(h_2g_2) = \dots  = \lt(h_kg_k) = \max \lt(h_ig_i)  \]
\[ \text{and} \hspace{1mm}\lt(h_pg_p) < \max \lt(h_ig_i)\hspace{2mm}\text{for} \hspace{1mm} p > k.  \]
We will use induction on $k$ to prove our desired result.\\
if $k = 1$,\hspace{1mm}a contradiction as there is no possibility to replace the combination with smaller $\max \lt(h_ig_i)$, so $k \geq 2$.\hspace{1mm}Now
\[ \lt(h_1g_1) = \lt(h_1)\lt(g_1)   \]
\[ \lt(h_2g_2) = \lt(h_2)\lt(g_2).  \]
If $\lt(h_1g_1) = \lt(h_2g_2)$, then one of the following three conditions holds:\\
(a) $\lt(g_1)$ is divisible by $\lt(g_2)$.\\
(b) $\lt(g_1),\lt(g_2)$ have a small common multiple.\\
(c) $\lt(g_1),\lt(g_2)$ have no common divisors.\\
Let us first consider case (a) ,
\[ \lt(g_1) = m\lt(g_2)  \]
\[ \text{then} \hspace{2mm} R_{g_2}(g_1) = g_1 - \frac{\lc(g_1)}{\lc(g_2)}mg_2  \]
\[ \text{now} \hspace{2mm}h_1g_1 + h_2g_2 = h_1\left( R_{g_2}(g_1) + \frac{\lc(g_1)}{\lc(g_2)}mg_2\right) + h_2g_2 \]
 \[                       = h_1R_{g_2}(g_1) + \left(\frac{\lc(g_1)}{\lc(g_2)}mh_1 + h_2\right)g_2  \]
 \[ \text{with} \hspace{3mm} \lt(R_{g_2}(g_1)) < \lt(g_1)  \]
 \[ \text{so} \hspace{2mm} h_1R_{g_2}(g_1) = \sum_{i}\tilde{h_i}g_ih_1  \]
 \[ \lt(g_1) > \lt(R_{g_2}(g_1)) = \max ( \lt(\tilde{h_i}g_i)) \]
 So the condition (a) replaces the combination by another one with smaller $k$.\hspace{1mm}Let us move to condition (b),
 \[ \lt(h_1)\lt(g_1) = \lt(h_2)\lt(g_2)  \]
 suppose that,
 \[ \lt(g_1) = m_1d  \]
 \[ \lt(g_2) = m_2d  \]
 where $m_1,m_2$ have no common factors,
 \[ \lt(h_1)m_1d = \lt(h_2)m_2d    \]
 cancelling $d$, we get,
 \[ \lt(h_1)=em_2  \]
 \[ \lt(h_2)=em_1,\hspace{3mm}\text{ for some}\hspace{1mm} e,  \]
 \[ h_1g_1 + h_2g_2 = \lc(h_1)\lt(h_1)g_1 + ( h_1 - \lc(h_1)\lt(h_1))g_1   \]
 \[ + \lc(h_2)\lt(h_2)g_2 +( h_2 - \lc(h_2)\lt(h_2))g_2  \]
 as\hspace{1mm} $( h_1 - \lc(h_1)\lt(h_1))g_1 +( h_2 - \lc(h_2)\lt(h_2))g_2$ \hspace{1mm} have smaller leading term so we proceed with the remaining terms,\hspace{1mm}i.e. we deal with
 \begin{equation} \lc(h_1)\lt(h_1)g_1 + \lc(h_2)\lt(h_2)g_2 . \end{equation}
 We have
 \[ m_2\lt(g_1) = m_1\lt(g_2)  \]
 so the S-polynomial with respect to small common multiple,
 \[ S = \frac{1}{\lc(g_1)}m_2g_1 - \frac{1}{\lc(g_2)}m_1g_2   \]
 Putting the values of $\lt(h_1)$ and $\lt(h_2)$ in equation (6) we get,
 \[ \lc(h_1)em_2g_1 + \lc(h_2)em_1g_2   \]
 \[ = \lc(h_1)\left(\lc(g_1)S + \frac{\lc(g_1)}{\lc(g_2)}m_1g_2 \right)e + \lc(h_2)em_1g_2 \]
 \[ = \lc(h_1)\lc(g_1)S + \left( \lc(h_1)\frac{\lc(g_1)}{\lc(g_2)}m_1e + \lc(h_2)m_1e \right)g_2  \]
 So we have again replaced the combination with a smaller $k$.\\
 Finally we have condition (c) in hand,
 \[ \lt(h_1)\lt(g_1) = \lt(h_2)\lt(g_2)  \]
 \[ \Rightarrow \lt(h_1) = \lt(g_2)e   \]
 \[ \text{and} \hspace{3mm} \lt(h_2) = \lt(g_1)e, \hspace{2mm}\text{for some}\hspace{1mm}e \]
 Now
 \[ h_1g_1 + h_2g_2  \]
 \[ = \lc(h_1)\lt(h_1)g_1 + \lc(h_2)\lt(h_2)g_2 + \hspace{2mm}\text{lower terms}. \]
 We proceed with 
 \[ \lc(h_1)\lt(h_1)g_1 + \lc(h_2)\lt(h_2)g_2 .  \]
 Replacing the values of $\lt(h_1)$ and $\lt(h_2)$ in the above term we get,
 \begin{equation}
 \lc(h_1)\lt(g_2)eg_1 + \lc(h_2)\lt(g_1)eg_2.
 \end{equation}
 We have 
 \[ \lt(g_2) = \frac{1}{\lc(g_2)}(g_2 - \bar{g_2}),   \]
 replacing this in equation (7) we get
 \[ \frac{\lc(h_1)}{\lc(g_2)}(g_2 - \bar{g_2})eg_1 + \lc(h_2)\lt(g_1)eg_2   \]
 which again makes either leading term or $k$ smaller.\\
 This is how we proceed using induction and prove statement (iii) from (ii).
\end{proof}
\subsection{Reduced Gr\"{o}bner basis}
\begin{definition}
A Gr\"{o}bner basis $G$ of $I$ is \textbf{reduced}[2] if for each $g \in G$,
\begin{itemize}
\item $\lc(g) = 1$.
\item $g - \lt(g)$ is a linear combination of normal monomials.
\end{itemize}
\end{definition}
\begin{theorem}
Let us fix an admissible ordering. Then every $I$ has a unique reduced Gr\"{o}bner basis.
\end{theorem}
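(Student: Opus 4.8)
The plan is to handle existence and uniqueness separately, and in both halves to lean on the fact that the monomial ideal $\lt(I)$ has a canonical, finite minimal monomial generating set. For \emph{existence}: by Lemma 3.2.2 the set $\lt(I)$ is an ideal, and by the Hilbert basis theorem (Theorem 3.2.1) it is finitely generated, so among its monomial generating sets there is a unique minimal one $\{m_1,\dots,m_s\}$, namely the monomials of $\lt(I)$ that are not divisible by any other monomial of $\lt(I)$ (in particular no $m_i$ divides $m_j$ for $i\neq j$). For each $i$ I would pick some $f_i\in I$ with $\lt(f_i)=m_i$ and rescale so that $\lc(f_i)=1$; by definition $\{f_1,\dots,f_s\}$ is already a Gr\"obner basis of $I$. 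To upgrade it to a reduced one I would ``clean the tails'': as long as some $f_i$ has a non-leading monomial that lies in $\lt(I)$, that monomial is divisible by some $m_j=\lt(f_j)$, and I replace $f_i$ by the corresponding reduction $R_{f_j}(\cdot)$ applied at that monomial, i.e. subtract the appropriate scalar times a monomial times $f_j$. Such a step leaves $\lt(f_i)=m_i$ unchanged and replaces one tail monomial by strictly smaller monomials, so by the well-ordering property of the admissible order the procedure terminates (to be safe one processes the tail monomials in decreasing order). The result $\{g_1,\dots,g_s\}\subset I$ still has leading terms $m_1,\dots,m_s$, hence is a Gr\"obner basis, and now every $g_i-\lt(g_i)$ is a linear combination of normal monomials, so the basis is reduced in the sense of Definition 3.4.1.

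For \emph{uniqueness}, suppose $G$ and $G'$ are both reduced Gr\"obner bases of $I$. The first step is to show they have the same set of leading terms. Both $\{\lt(g):g\in G\}$ and $\{\lt(g'):g'\in G'\}$ generate the monomial ideal $\lt(I)$; moreover in a reduced Gr\"obner basis no leading term divides another (if $\lt(g_i)$ divided $\lt(g_j)$ with $i\neq j$ one could still reduce $g_j$, contradicting that its tail consists of normal monomials together with the minimality built into reducedness). Hence each of these two sets is exactly the minimal monomial generating set of $\lt(I)$, so they coincide; after reindexing, $\lt(g_i)=\lt(g_i')=m_i$. Now fix $i$ and consider $h=g_i-g_i'\in I$. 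Since $\lc(g_i)=\lc(g_i')=1$ the leading monomials cancel, and every remaining monomial of $h$ comes from $g_i-\lt(g_i)$ or from $g_i'-\lt(g_i')$, hence is normal. If $h\neq 0$ then $\lt(h)\in\lt(I)$ would be divisible by some $m_j$ and so would not be normal, a contradiction. Therefore $g_i=g_i'$ for all $i$, i.e. $G=G'$.

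The main obstacle is the opening step of the uniqueness argument: pinning down that the leading terms of \emph{any} reduced Gr\"obner basis are forced to be precisely the minimal monomial generators of $\lt(I)$, so that the two bases can be matched up index by index. This is where one must use reducedness beyond the literal tail condition (otherwise redundant elements like a spurious $\lt(g_i)$-multiple could creep in); once the leading-term sets are identified, the ``take the difference and observe its leading monomial is simultaneously in $\lt(I)$ and normal'' trick closes uniqueness in one line, and the existence half reduces to the termination of tail reduction, which is immediate from the well-ordering property.
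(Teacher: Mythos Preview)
Your proof is correct and follows the same strategy as the paper: existence by rescaling and reducing tails, uniqueness by taking the difference $g_i-g_i'\in I$ and noting that its leading monomial would be simultaneously in $\lt(I)$ and normal. You are in fact more thorough than the paper on precisely the step you flag as the main obstacle---the paper simply \emph{assumes} the two bases can be indexed so that $\lt(f_i)=\lt(g_i)$ and jumps straight to the difference argument, whereas you justify this via the minimal monomial generators of $\lt(I)$ and correctly observe that the tail-only Definition~3.4.1 does not by itself exclude redundant elements.
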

\begin{proof}
Let us take some Gr\"{o}bner basis $G \subset I$.\\
First condition of reduced Gr\"{o}bner basis is easy to satisfy as we just divide each $g$ by it's leading co-efficient,\hspace{1mm}i.e. $ g \rightarrow g/\lc(g)$.\\
The reduction and S-polynomial suggests that remaining terms of $g$ is not divisible by the leading term of any terms in $G$ which implies that $g - \lt(g)$ is a linear combination of normal monomials. Now we will prove the uniqueness.\\
Let $\{ f_1,f_2,\dots,f_s \}$ and $\{g_1,g_2,\dots,g_s \}$ be two reduced and ordered Gr\"{o}bner bases so that $\lt(f_i)=\lt(g_i)$ for each $i$.\hspace{1mm}Consider $f_i - g_i \in I$,\hspace{1mm}if it's not $0$,\hspace{1mm}then its leading term must be a term that appeared either in $f_i$ or in $g_i$.\hspace{1mm}In either case,\hspace{1mm}this contradicts the fact that the bases being reduced,\hspace{1mm}so in fact we get our required $f_i = g_i$.
\end{proof}

\subsection{Terminology for Gr\"{o}bner bases in non-commutative algebras}
We will deal with non-commutative polynomial algebra $\mathbb{K}\langle x_1,x_2,\dots,x_n \rangle$. For definitions of admissible ordering in non-commutative cases and more terminology we refer [13][16]. 
\begin{definition}
We have two polynomials $f,g \in \mathbb{K}\langle x_1,x_2,\dots,x_n \rangle$ , with there exists monomials $m_1,m_2$ such that,
\[  \lt(f) = m_1\lt(g)m_2 \]
then
\begin{equation} R_g(f) = f - \frac{\lc(f)}{\lc(g)}m_1gm_2   \end{equation}
is called \textbf{reduction} of $f$ with respect to $g$.
\end{definition}
\begin{definition}
For $f, g \in \mathbb{K}\langle x_1,x_2,\dots,x_n \rangle$ and for any small common multiple of $\lt(f),\lt(g)$;\hspace{1mm}there exists two monomials $m_1,m_2$ with
\[   \lt(f)m_2 = m_1\lt(g) \hspace{2mm}with \hspace{1mm} \deg m_1 < \deg \lt(f) \]
then
\begin{equation}
S(f,g) = \frac{1}{\lc(f)}fm_2 - \frac{1}{\lc(g)}m_1g
\end{equation}
is called the \textbf{S-polynomial} with respect to small common multiples.
\end{definition}
\begin{example}
In 3.6, while computing Gr\"{o}bner basis in non-commutative polynomial algebra we give examples to show how reduction and S-polynomial work.
\end{example}
\begin{lemma}
Let $ I \subset \mathbb{K}\langle x_1,x_2,\dots,x_n \rangle$ be a two-sided ideal. Then $\lt(I)$ which is linear span of $\lt(f)$ with $f \in I$ is also a two-sided ideal.
\end{lemma}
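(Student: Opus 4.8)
The plan is to follow the proof of the commutative case (Lemma 3.2.2), upgraded to a two-sided statement. First I would reduce the claim to a purely combinatorial statement about monomials. By definition $\lt(I)$ is the $\mathbb{K}$-linear span of the set $S = \{\lt(f) : f \in I\}$ of leading monomials of elements of $I$. A linear span of a set of monomials that is closed under the operations $m \mapsto u m v$ for all monomials $u,v$ (including the empty word) is automatically a two-sided ideal: it is a subspace, hence closed under subtraction, and for arbitrary $a,b \in \mathbb{K}\langle x_1,\dots,x_n\rangle$ and $w$ in the span one expands $a$, $b$, $w$ into monomials and uses bilinearity of the product to see that $awb$ again lies in the span. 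So it suffices to show: whenever a monomial $m$ is the leading monomial of some $f \in I$, the monomial $umv$ is the leading monomial of some element of $I$, for every pair of monomials $u,v$.

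To prove that, I would take $f \in I$ with $\lt(f) = m$ and write $f = \lc(f)\,m + \sum_i c_i m_i$, where $\lc(f) \neq 0$ and $m_i < m$ for all $i$. Since $I$ is two-sided, $ufv \in I$, and
\[
ufv = \lc(f)\,umv + \sum_i c_i\,u m_i v .
\]
Now I invoke compatibility of the admissible ordering with multiplication on both sides: from $m_i < m$ we get $u m_i < u m$ and then $u m_i v < u m v$, so every monomial occurring after the first term of $ufv$ is strictly smaller than $umv$. Hence $\lt(ufv) = umv$ with leading coefficient $\lc(f) \neq 0$, i.e.\ $umv \in \lt(I)$. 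Combining this with the reduction in the first paragraph gives the lemma.

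I do not anticipate a real obstacle. The only point requiring care is that, in contrast to the commutative situation, one must use the form of the admissibility axiom that is stable under multiplication on both the left and the right (the version adopted for the free associative algebra in the references cited for the non-commutative setting), and one should keep in mind that $\lt(I)$ is the \emph{span} of the leading monomials rather than literally the set of leading monomials, so the ideal property is obtained only after passing to spans via bilinearity.
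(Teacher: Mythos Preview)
Your argument is correct. In the paper this non-commutative lemma is actually stated without proof; the only proof given is for the commutative analogue (Lemma~3.2.2), and your write-up is precisely the two-sided upgrade of that argument: pick $f\in I$ with $\lt(f)=m$, multiply on both sides by monomials $u,v$, and use admissibility of the ordering on each side to conclude $\lt(ufv)=umv$. Your additional remark reducing the ideal property to closure of the generating set of monomials under $m\mapsto umv$ via bilinearity is a clean way to handle general (non-monomial) multipliers, a point the paper's commutative proof leaves implicit.
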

\begin{definition}
A subset $ G \subset I$ is a \textbf{Gr\"{o}bner basis} of $I$ if for every $f \in I$,\hspace{1mm}we have $\lt(f) = m_1\lt(g)m_2$ for some $g \in G$.
\end{definition}
\begin{lemma}[Diamond lemma for non-commutative case]
$G \in I$ forms a Gr\"{o}bner basis of $I$ if and only if for each $g_1,g_2 \in G$;\hspace{1mm}$R_{g_2}(g_1)$(if defined) can be reduced to $0$ modulo $G$. Also for each $g_1,g_2 \in g$ and each small common multiple of $\lt(g_1),\lt(g_2)$;\hspace{1mm}the corresponding S-polynomial can be reduced to $0$ modulo $G$.
\end{lemma}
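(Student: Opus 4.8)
The plan is to mirror the proof of the commutative Diamond lemma (the three-condition reformulation proved above), adapting every step to the free associative algebra $\mathbb{K}\langle x_1,\dots,x_n\rangle$, where monomials are words, ``$m$ divisible by $\lt(g)$'' means that $\lt(g)$ occurs as a connected subword of $m$, and two-sided multiples take the shape $m_1 g m_2$. Working under the standing assumption $(G)=I$, I would first record the equivalence of (i) $G$ is a Gr\"obner basis of $I$; (ii) every reduction $R_{g_2}(g_1)$ and every S-polynomial $S(g_1,g_2)$ relative to a small common multiple of $\lt(g_1),\lt(g_2)$ reduces to $0$ modulo $G$; (iii) every $f\in I$ admits a representation $f=\sum_i h_i g_i k_i$ with $g_i\in G$ and $\lt(f)=\max_i\lt(h_i)\lt(g_i)\lt(k_i)$. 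The lemma is then precisely (i)$\Leftrightarrow$(ii).

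The implication (iii)$\Rightarrow$(i) is immediate, since the equality $\lt(f)=\max_i\lt(h_i)\lt(g_i)\lt(k_i)$ exhibits $\lt(f)$ as a word containing some $\lt(g_i)$ as a subword. For (i)$\Rightarrow$(ii), note that $R_{g_2}(g_1)$ and $S(g_1,g_2)$ lie in $I$ by construction; if $G$ is a Gr\"obner basis, the leading word of any nonzero element of $I$ is divisible by some $\lt(g)$, so one reduction step strictly lowers the leading word (here we use that the order is admissible, hence stable under two-sided multiplication). Since an admissible order is a well order, the process terminates, and it can only terminate at $0$.

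The substantive part is (ii)$\Rightarrow$(iii), where I would run the same induction as in the commutative case. Given $f=\sum_i h_i g_i k_i$ with $M:=\max_i\lt(h_i)\lt(g_i)\lt(k_i)>\lt(f)$, let $k\ge 2$ be the number of terms realizing $M$, arranged so that $\lt(h_j)\lt(g_j)\lt(k_j)=M$ for $j\le k$. Splitting each top term $h_j g_j k_j$ into $\lc(h_j)\lc(k_j)\,\lt(h_j)\,g_j\,\lt(k_j)$ plus summands of strictly smaller leading word, it suffices to rewrite $\lc(h_1)\lc(k_1)\,\lt(h_1)\,g_1\,\lt(k_1)+\lc(h_2)\lc(k_2)\,\lt(h_2)\,g_2\,\lt(k_2)$ so that either $M$ or $k$ decreases. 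Comparing the occurrence of $\lt(g_1)$ starting at position $|\lt(h_1)|$ with that of $\lt(g_2)$ at position $|\lt(h_2)|$ inside the one word $M$, and taking $|\lt(h_1)|\le|\lt(h_2)|$, exactly one of three situations holds: (a) $\lt(g_2)$ is a subword of $\lt(g_1)$ (an inclusion ambiguity), where the hypothesis that $R_{g_2}(g_1)$ reduces to $0$ modulo $G$ lets us replace the pair by an expression with strictly smaller leading word; (b) a nonempty proper suffix of $\lt(g_1)$ coincides with a proper prefix of $\lt(g_2)$ (an overlap ambiguity, i.e.\ a small common multiple), where the hypothesis that $S(g_1,g_2)$ reduces to $0$ modulo $G$ does the same; or (c) the two occurrences are disjoint in $M$, in which case, exactly as in the commutative case, substituting $\lt(g_2)=\tfrac{1}{\lc(g_2)}\bigl(g_2-\overline{g_2}\bigr)$ and expanding lowers $M$ or $k$. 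In each case the substitution, once multiplied back on both sides by the outer words and reinserted into $f$, yields a new representation, and induction on $(M,k)$, ordered lexicographically by the well order on words and then by $k$, finishes the proof.

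The main obstacle I anticipate is the bookkeeping in cases (a)--(c): unlike the commutative setting there is genuine combinatorics of word overlaps, and one must verify both that those three cases really exhaust all ways two marked factors can sit inside a single word, and that after multiplying a reduction (or an S-polynomial reduction) by the appropriate outer words and substituting, every resulting monomial has leading word $\le M$ with strictly fewer equal to $M$. Once that verification is done, termination of the induction is automatic from well-foundedness of the admissible order.
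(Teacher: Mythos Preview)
Your proposal is correct and follows exactly the approach the paper indicates: the paper's own proof of this lemma simply states that one should adapt the commutative argument of Lemma~3.3.4 with modifications to the reduction bookkeeping, and refers the reader to~[13] for details. Your sketch supplies precisely that adaptation---the two-sided representation $f=\sum_i h_i g_i k_i$ in condition~(iii), and the trichotomy of inclusion/overlap/disjoint positions of $\lt(g_1)$ and $\lt(g_2)$ inside the maximal word $M$---so you are carrying out what the paper only gestures at.
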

\begin{proof}
The proof is similar as the proof of diamond lemma in case of commutative algebra. Here we need some modification while dealing with reduction terms and continue with the same strategy of the proof given in lemma 3.3.4, so we skip this proof. For a proof, we direct to [13].
\end{proof}
\subsection{Computation of Gr\"{o}bner bases}
 In this section we show how to compute Gr\"{o}bner basis for an ideal $I$ of a polynomial algebra. At first we present the Buchberger's algorithm of computation of Gr\"{o}bner basis in a commutative algebra. The algorithm is similar in case of non-commutative algebra. 
 
 \subsubsection{Buchberger's algorithm}
 We start with an ideal $I$ generated by a set $G$. The Buchberger's algorithm,\hspace{1mm}which is a simple consequence of lemma 3.3.3,\hspace{1mm}is the following:\\
 Step 1: If the leading term of any element $u$ of $G$ occurs inside the leading term of another element $v$ of $G$,\hspace{1mm}then we reduce $v$ by subtracting off the required multiple of $u$. In general we will perform the reduction mentioned in definition 3.3.3.\\
 Step 2: For each pair of distinct elements of $G$ we compute the S-polynomial (mentioned in definition 3.3.4) and a remainder of it.\\
 Step 3: If the remainder can be reduced further then we will follow step 1 or we will add that term in our set $G$. If all S-polynomials reduce to $0$,\hspace{1mm}then the algorithm ends and $G$ is the Gr\"{o}bner basis of $I$. If not then we will continue further with our 3 steps.
 \begin{remark}
 For commutative cases the algorithm ends in a finite number of stages. However for a non-commutative case there is no guarantee of the termination of the algorithm after a finite number of stages. In that case we start adding all elements which can't be reduced further in our set $G$ and in most cases we have seen a combinatorial interpretation for our terms in $G$. 
\end{remark}
 \subsubsection{Commutative example}
\begin{example}
 Let us take $\mathbb{K}[x_1,x_2]$ as our commutative polynomial algebra in two variables $x_1,x_2$. Suppose there are two polynomials
 \[  h_1(x_1,x_2) = x_1^2 + x_2^2  \]
 \[  h_2(x_1,x_2) = x_1^3 + x_2^3 \]
 belonging to our polynomial algebra $\mathbb{K}[x_1,x_2]$. We compute the Gr\"{o}bner basis for $I=(h_1,h_2) \subset \mathbb{K}[x_1,x_2]$.\\
 Let us fix an admissible ordering. Usually we take \textbf{DEGLEX} ordering. So here we consider $x_1 > x_2$. So we get $\lt(h_1)=x_1^2$ and $\lt(h_2)=x_1^3$. So initially our set is $G=\{ h_1,h_2 \}$. But we see that $h_2$ can be reduced further. So we have $x_1^3= x_1\cdot x_1^2$,
 \[ R_{h_1}(h_2) = (x_1^3 +x_2^3) - x_1(x_1^2 +x_2^2)  \]
 \[  = x_2^3 -x_1x_2^2.     \]
So we have obtained a new term $x_2^3 - x_1x_2^2$ which cannot be reduced further,\hspace{1mm}we add this to our set $G$ which is now $\{ h_1,R_{h_1}(h_2) \}$. We call $R_{h_1}(h_2)$ as $h_3$. We see that the leading term of $h_3$ is $x_1x_2^2$. We have also found that $ x_1\cdot x_1x_2^2 = x_1^2 \cdot x_2^2$. So we will compute the S-polynomial between $h_1,h_3$.
\[ S(h_1,h_3) = -x_1(x_2^3 - x_1x_2^2) - (x_1^2 + x_2^2)x_2^2  \]
\[  = -x_1x_2^3 - x_2^4.   \]
The term $-x_1x_2^3 - x_2^4$ has $x_1x_2^3$ as the leading term which can be reduced further through $\lt(h_3)$. We get $x_1x_2^3 = (x_1x_2^2)\cdot x_2$. hence the reduction yields
\[ -x_1x_2^3 - x_2^4 -(x_2^3 -x_1x_2^2)x_2   \]
\[ = -2x_2^4    \]
which cannot be reduced further and also one cannot compute more S-polynomial. Hence we add $-2x_2^4$ in our set $G$ and the final set $G$ is our Gr\"{o}bner basis for $I$,\hspace{1mm}the set is precisely as follows
\[ \{ x_1^2+x_2^2,\hspace{1mm}x_2^3 -x_1x_2^2,\hspace{1mm}-2x_2^4 \}.   \]
\end{example}
\begin{remark}
The reduced Gr\"{o}bner basis of $I$ of our previous example is given by $\{ x_1^2+x_2^2,\hspace{1mm}x_1x_2^2-x_2^3,\hspace{1mm}x_2^4 \}$. It is not very difficult to obtain this reduced Gr\"{o}bner basis from our computed Gr\"{o}bner basis. If we recall the definition of reduced Gr\"{o}bner basis we will see that all leading co-efficients of the reduced basis should be $1$. So we just divide terms $-2x_2^4, x_2^3 - x_1x_2^2$ of $\{ x_1^2+x_2^2,\hspace{1mm}x_2^3 -x_1x_2^2,\hspace{1mm}-2x_2^4 \}$ by $-2$ and $-1$ respectively to obtain $\{ x_1^2+x_2^2,\hspace{1mm}x_1x_2^2-x_2^3,\hspace{1mm}x_2^4 \}$. Indeed $x_2^2$ and $x_2^3$ are normal monomials. So we have obtained the reduced Gr\"{o}bner basis of $I$.  
\end{remark}

\subsubsection{Non-commutative examples}
\begin{example}
Let us consider $\mathbb{K}\langle x,y \rangle$ as non-commutative polynomial algebra in two variables. We are going to compute Gr\"{o}bner basis for $I= (x^2 -xy)$. So we begin with our set $G$ as $\{ x^2-xy \}$ whose leading term is $x^2$ (we consider the \textbf{DEGLEX} for this case i.e. here $x>y$). We need to compute S-polynomial between polynomial $f_1= x^2-xy$ and $f_2=x$. We have $x^2 \cdot x = x \cdot x^2$. Then
\[ S(f_1,f_2) = (x^2-xy)x - x(x^2-xy)   \]
\[ = xxy - xyx.   \]
We see that $xxy-xyx$ whose leading term is $xxy$ can be reduced further. So we have $xxy = x^2 \cdot y$. So the reduction is
\[ (xxy-xyx) - (x^2-xy)y  \]
\[ = xxy - xyx - x^2y + xyy    \]
\[ = xyy - xyx   \]
which cannot be reduced further. so we include $xyy-xyx$ in our set $G$ which is now $\{ x^2-xy,xyy-xyx \}$. We see that $x^2 \cdot yx = x \cdot xyx$,\hspace{1mm}hence we compute S-polynomial between those elements of our set $G$ and we get
\[ (x^2-xy)yx + x(xyy - xyx)  \]
\[ = x^2yx - xyyx + xxyy - xxyx   \]
\[ = xxyy- xyyx    \]
this element with leading term $xxyy$ can be reduced further and we get $xxyy = x^2 \cdot yy$. So the reduction gives
\[ (xxyy - xyyx) - (x^2 -xy)yy   \]
\[ = xxyy - xyyx - x^2yy + xyyy  \]
\[ = xyyy - xyyx \]
which cannot be reduced further and we add this term in our existing set $G$ and obtain $\{ x^2-xy,xyy-xyx,xyyy-xyyx \}$.\\
Now we claim that the Gr\"{o}bner basis for $I= (x^2-xy)$ is given by
\[ \{ x^2 - xy \} \cup \hspace{1mm} \bigcup_{i=2}^{\infty} \{ xy^i - xy^{i-1}x \}.  \]
This is indeed very easy to prove. We will construct elements of $\bigcup_{i=2}^{\infty} \{ xy^i - xy^{i-1}x \}$ by method of induction. We have already shown for $i=2,3$.\\ Suppose up to $i=k$ steps the set $G$ is 
\[ \{ x^2 - xy \} \cup \hspace{1mm} \bigcup_{i=2}^{k} \{ xy^i - xy^{i-1}x \}.  \]
But none of $\{xy^i - xy^{i-1}x \}$ for $2 \leq i \leq k$ have S-polynomial between them,\hspace{1mm}so $\lt(xy^k-xy^{k-1}x)= xy^{k-1}x$ and we have $x^2 \cdot y^{k-1}x = x \cdot xy^{k-1}x$ which leads us to compute the s-polynomial between $x^2-xy$ and $xy^k-xy^{k-1}x$ and we get
\[ (x^2-xy)y^{k-1}x + x(xy^k - xy^{k-1}x)   \]
\[ = x^2y^{k-1}x - xy^kx + x^2y^k - x^2y^{k-1}x    \]
\[ = x^2y^k - xy^kx    \]
which has the leading term $x^2y^k$ which can be reduced further. We get $x^2y^k = x^2 \cdot y^k$. Hence the reduction is 
\[ (x^2y^k - xy^kx) - (x^2 -xy)y^k  \]
\[ = xy^{k+1} - xy^kx   \]
which cannot be reduced further and so we add the term $xy^{k+1}- xy^kx$ in our existing set $G$. Hence the induction is complete. Therefore
\[ \{ x^2 - xy \} \cup \hspace{1mm} \bigcup_{i=2}^{\infty} \{ xy^i - xy^{i-1}x \}.  \]
is our Gr\"{o}bner basis.
\end{example}
 
\begin{example}
Let us give another example to compute Gr\"{o}bner basis for a non-commutative polynomial algebra. Consider $\mathbb{K}\langle x,y,z \rangle$ as our non-commutative polynomial algebra. We are going to compute Gr\"{o}bner basis for $I =(x^2, xy-zx)$. We will consider \textbf{DEGLEX} ordering,\hspace{1mm}so we consider $x > y > z$. The leading terms of $x^2$ and $xy-zx$ are $x^2$ and $xy$ respectively and both of them cannot be reduced further. So our initial set $G$ is $\{ x^2, xy-zx \}$. However we can compute S-polynomial between them based on $x^2 \cdot y = x \cdot xy$ and obtain
\[ x^2y - x(xy-zx)  \]
\[ = xzx     \]
which cannot be reduced further and so include it in our set $G$. So now our set $G$ is $\{ x^2, xy-zx, xzx \}$. We can compute S-polynomial between $xy-zx$ and $xzx$. We have $ xzx \cdot y = xz \cdot xy$ and so our S-polynomial is 
\[ (xzx)y - xz(xy-zx)  \]
\[ = xzzx  \]
which cannot be reduced further so we add it to $G$ and get $\{ x^2, xy-zx, xzx, xzzx \}$. This is how we proceed and claim that Gr\"{o}bner basis for $I$ is given by
\[ \{x^2, xy-zx \} \cup \hspace{1mm} \bigcup_{i=1}^{\infty}\{ xz^ix \}.  \]
We can prove this using the similar argument we have used in the previous example by method of induction.
\end{example}
\newpage 
\section{Anick's Resolution and Its Computations}

\subsection{Chain and their examples}
 Let us begin the discussion of Anick's resolution[3] with some background materials:

\subsubsection{Calculating Hilbert series through chains}
\begin{lemma}
If $A$ is an algebra with space spanned by normal words $N$, then $H_A = H_N$.
\end{lemma}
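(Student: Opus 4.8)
The plan is to deduce this from the basis theorem for Gr\"obner bases proved earlier (Lemma 3.2.3, restated in the language of normal words as Lemma 3.3.2, together with its non-commutative counterpart coming from Lemma 3.5.2): the cosets of the normal words form a $\mathbb{K}$-basis of $A$. Throughout, $A = \mathbb{K}\langle x_1,\dots,x_n\rangle/I$ with $I$ a homogeneous two-sided ideal, the admissible ordering is taken compatible with the grading, $G$ is a Gr\"obner basis of $I$, and $N \subseteq \mathbb{K}\langle x_1,\dots,x_n\rangle$ is the linear span of the normal words, i.e.\ the monomials not divisible by any $\lt(g)$, $g \in G$.

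First I would observe that $N$ is a graded subspace: every normal word is a single monomial, hence homogeneous, so $N = \bigoplus_{d\ge 0} N_d$ with $N_d$ spanned by the normal words of degree $d$. Since distinct monomials of the free algebra are linearly independent, $\dim N_d$ equals the number of normal words of degree $d$, whence $H_N = \sum_{d\ge 0}\bigl(\#\{\text{normal words of degree }d\}\bigr)t^d$.

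Next I would invoke the basis theorem. Because $I$ is homogeneous, the leading term of a homogeneous element is homogeneous of the same degree, so passage to normal form preserves degree; consequently the basis of $A$ furnished by the cosets of the normal words is compatible with the grading, and the cosets of the normal words of degree $d$ form a basis of $A_d$. Hence $\dim A_d = \#\{\text{normal words of degree }d\} = \dim N_d$ for every $d$, and therefore
\[ H_A = \sum_{d\ge 0}\dim(A_d)\,t^d = \sum_{d\ge 0}\dim(N_d)\,t^d = H_N. \]

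There is essentially no obstacle here; the only delicate point is the compatibility of the admissible ordering, and hence of the set of normal words, with the grading, which is exactly why one restricts to homogeneous ideals so that $A$ is genuinely graded — the standing assumption of this paper. Equivalently, one can phrase the whole argument in one line: the composite $N \hookrightarrow \mathbb{K}\langle x_1,\dots,x_n\rangle \to A$ is a morphism of graded vector spaces which is bijective by the basis theorem, and a graded linear isomorphism preserves Hilbert series.
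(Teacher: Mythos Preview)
Your proof is correct and rests on the same underlying fact as the paper's---that the normal words form a graded basis of $A$---so the approaches are essentially the same. The paper merely phrases it via the vector-space decomposition $\mathbb{K}\langle x_1,\dots,x_n\rangle = N \oplus I$ and the additivity $H_{\text{free}} = H_N + H_I$, whereas you invoke the basis theorem for the quotient directly; your version is if anything more careful about the graded/homogeneous hypothesis.
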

\begin{proof}
$A$ is the free associative algebra, modulo some ideal $I$. The free associative algebra itself decomposes into $N \oplus I$, so it's Hilbert series can be written as $H_N + H_I$. Taking modulo the ideal $I$ to the free algebra turns the free algebra into our algebra $A$, and eliminates the $H_I$ term. So we obtain $H_A = H_N$ as required.
\end{proof}
Let $A = \mathbb{K}\langle X \hspace{1mm}|\hspace{1mm}R \rangle$ be an algebra where $X$ is the set of generators and $R$ is the set of relations. Let the set of normal words be $N$, and say we have a reduced Gr\"{o}bner basis $G$. Let $F$ be the set of leading terms of $G$ - we call the elements of $F$ \textit{leading terms}. A word is normal with respect to $G$ if and only if it does not contain any of the elements of $F$ as a subword. Thus a word $s$ is normal with respect to $G$ if and only if it's normal with respect to $F$. Therefore the algebra $\hat{A} = \mathbb{K}\langle X \hspace{1mm}|\hspace{1mm}F \rangle$ has the same normal words as $A$. And so $H_A = H_{\hat{A}}$. The advantage of dealing with $F$ instead of $G$ is that $F$ consists of monomials only, which makes it much easier to deal with.

\begin{definition}[Chains]
We will define chain[3] inductively: A $(-1)$ - chain is the empty word and is its own tail. The $0$-chains are the elements of the generating set $X$, and are also their own tails. A $n$-chain is a word $f$ of the form $gt$, with some conditions on $g$ and $t$. Firstly, $g$ must be a $(n-1)$-chain and $t$ is a normal word. Secondly, if $r$ is the tail of $g$ then $\deg_F rt = 1$; that is, the word $rt$ contains exactly one element of $F$ as a subword. This subword must occur at the end of $rt$. The tail of $gt$ is defined to be $t$.\\ 
We denote the space spanned by $n$-chains by $C_n$.
\end{definition}
\begin{example}
Let $F = \{ x^3 \}$. The unique 1-chain is $x^3= x \cdot x^2$ and its tail is $x^2$. Then the unique 2-chain is $\overline{xxxx} = x^3 \cdot x$. The word $x^3 \cdot x^2$ is not a 2-chain, since $\deg_F x^2x^2 = 2$. The unique 3-chain is the word $x^6=x^4x^2$. The word $x^5 = x^4x$ is not a 3-chain because $\deg_F x\cdot x =0$, regardless of the fact that it can be represented ($xxxxx$) as a link of three leading terms $\overline{xxx}$ (the fact is that the first one intersects with the last one).\\ 
In general the $n$-chain is given by $x^{3m+1}$ if $n =2m$ and $x^{3m}$ if $n=2m+1$. We see that in this case for every $n$ there exists only one $n$-chain.
\end{example}
\begin{theorem}
Let
\[ \begin{tikzcd}
  \dots \arrow{r}{d_{n+1}} & A_n \arrow{r}{d_n} & A_{n-1} \arrow{r}{d_{n-1}} & \dots \arrow{r}{d_{k+1}} & A_k \arrow{r}{d_k} & K \arrow{r} & 0 
  \end{tikzcd}\]
  be an exact sequence of graded spaces (i.e $\ker d_i = \Ima d_{i+1}$). Then
  \[  \sum_{i=k}^{\infty} (-1)^i H_{A_i} = (-1)^k     \]
  if the sum is well defined.
\end{theorem}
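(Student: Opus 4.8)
The plan is to compute the Hilbert series of the exact sequence term by term, using additivity of Hilbert series over short exact sequences together with a telescoping argument. First I would break the long exact sequence into short exact pieces by introducing the images $Z_i = \Ima d_i = \ker d_{i-1}$. For each $i \geq k$ we then have a short exact sequence of graded spaces
\[ 0 \rightarrow Z_{i+1} \rightarrow A_i \rightarrow Z_i \rightarrow 0, \]
and for the bottom of the sequence we have $Z_k = \Ima d_k = \ker(K \rightarrow 0) = K$. Since a short exact sequence of graded vector spaces splits in each degree, Theorem 2.5.1 (or rather its underlying observation that dimensions add over direct sums in each graded component) gives $H_{A_i} = H_{Z_{i+1}} + H_{Z_i}$ for every $i \geq k$.

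Next I would form the alternating sum $\sum_{i=k}^{\infty}(-1)^i H_{A_i}$ and substitute the relation above, obtaining
\[ \sum_{i=k}^{\infty}(-1)^i H_{A_i} = \sum_{i=k}^{\infty}(-1)^i\bigl(H_{Z_{i+1}} + H_{Z_i}\bigr). \]
The right-hand side telescopes: the $H_{Z_{i+1}}$ term in the $i$-th summand cancels against the $H_{Z_{i+1}}$ term coming from the $H_{Z_{i+1}}$ contribution in the $(i+1)$-th summand (the signs $(-1)^i$ and $(-1)^{i+1}$ being opposite). What survives is the very first term, namely $(-1)^k H_{Z_k} = (-1)^k H_K = (-1)^k$, since $K$ is concentrated in degree $0$ with dimension $1$. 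This yields the claimed identity.

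The main obstacle is not the algebra, which is a routine telescoping, but making sense of the convergence hypothesis ``if the sum is well defined.'' The sum $\sum_{i=k}^{\infty}(-1)^i H_{A_i}$ is a sum of formal power series, so for it to be meaningful one needs that in each fixed degree $d$ only finitely many $A_i$ have a nonzero component, or at least that the alternating sum of those dimensions converges; this is exactly the situation in the graded resolutions we care about, where $A_i$ is generated in degrees $\geq i$ (or some increasing bound). I would therefore state the argument degree by degree: fix $d$, extract the coefficient of $t^d$ throughout, and observe that the telescoping happens inside an eventually-zero (hence finite) sum of integers, so no analytic subtlety arises once well-definedness is assumed. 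The identity $H_{A_i} = H_{Z_{i+1}} + H_{Z_i}$ also must be read coefficientwise, and one should note $Z_i$ is a graded (homogeneous) subspace because the $d_i$ are maps of graded spaces, so Corollary 2.5.1 applies with equality.
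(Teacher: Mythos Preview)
Your proof is correct and follows essentially the same route as the paper: the paper writes $H_{A_i} = H_{\ker d_i} + H_{\Ima d_i} = H_{\Ima d_{i+1}} + H_{\Ima d_i}$ directly from rank--nullity and exactness, which is exactly your identity $H_{A_i} = H_{Z_{i+1}} + H_{Z_i}$, and then telescopes. Your extra care about reading the well-definedness hypothesis coefficientwise is a nice addition the paper omits.
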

\begin{proof}
We know that if $f: V \rightarrow W$ is any linear transformation such that $f(V_n) \subset f(W_n)$, then $H_V = H_{\ker f} + H_{\Ima f}$. So
\[  H_{A_i} = H_{\ker {d_i}} + H_{\Ima {d_i}} = H_{\Ima {d_{i+1}}} + H_{\Ima {d_i}},  \]
where the second equality comes from exactness. Thus taking the alternating sum of these equalities, we get our required result.
\end{proof}
\begin{theorem}[Hilbert series from chains]
Let $A$ is an algebra and $C_n$ be the linear span of $n$-chains. We point out that
\[ H_{C_{-1}} = H_K = 1;\hspace{4mm} H_{C_{0}} = H_X; \hspace{4mm} H_{C_1}= H_F.   \]
Then
\[  H_A = ( H_{C_{-1}} - H_{C_0} + H_{C_1} - H_{C_2} + \dots )^{-1}.    \]
\end{theorem}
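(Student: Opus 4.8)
The plan is to deduce this formula from Anick's resolution together with the Euler-characteristic identity for exact sequences of graded spaces that was established in the previous theorem. Recall that Anick's resolution (the theorem to be presented in Section 4.2) provides, out of the set $F$ of leading terms, an exact sequence of graded left $A$-modules
\[ \cdots \longrightarrow A \otimes C_2 \longrightarrow A \otimes C_1 \longrightarrow A \otimes C_0 \longrightarrow A \otimes C_{-1} \longrightarrow K \longrightarrow 0, \]
in which $C_n$ is the span of the $n$-chains, each term $A \otimes C_n$ is the free $A$-module on the set of $n$-chains, and $A \otimes C_{-1} = A \otimes K \cong A$ maps onto $K$ via the augmentation. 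So the first step is simply to invoke this resolution; essentially all the homological content of the statement is concentrated here.

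Next I would forget the module structure and view the above as an exact sequence of graded vector spaces, then apply the preceding theorem (the alternating sum of Hilbert series over an exact sequence) with $A_i := A \otimes C_{i-1}$ and $k = 0$. This yields $\sum_{i=0}^{\infty} (-1)^i H_{A \otimes C_{i-1}} = (-1)^0 = 1$, provided the sum is well defined degree by degree. For that one checks that an $n$-chain has word-length at least $n+1$ (it is obtained from an $(n-1)$-chain by appending a nonempty normal word, so the length strictly increases at each of the $n$ steps from the length-$1$ generators), hence in any fixed total degree $d$ only the chains with $n \le d-1$ contribute and the alternating sum is a finite sum in each degree.

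Then, using the multiplicativity of the Hilbert series under tensor product (Theorem 2.5.1), I would rewrite $H_{A \otimes C_{i-1}} = H_A \cdot H_{C_{i-1}}$, pull $H_A$ out of the sum, and reindex by $n = i-1$, recording $H_{C_{-1}} = H_K = 1$, $H_{C_0} = H_X$, $H_{C_1} = H_F$ as in the statement. This gives
\[ H_A \bigl( H_{C_{-1}} - H_{C_0} + H_{C_1} - H_{C_2} + \cdots \bigr) = 1, \]
and since the power series in the bracket has constant term $1$ it is invertible, so inverting both sides produces the claimed identity $H_A = \bigl( H_{C_{-1}} - H_{C_0} + H_{C_1} - H_{C_2} + \cdots \bigr)^{-1}$.

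The only genuine obstacle is Step 1, namely the exactness of Anick's resolution; everything afterwards is formal bookkeeping with Hilbert series. In a fully self-contained account one could instead replace Step 1 by a direct combinatorial argument building an exact sequence $\cdots \to A \otimes C_1 \to A \otimes C_0 \to A \to K \to 0$ of graded spaces from the normal-form reduction process, but this amounts to reconstructing (at least the Euler characteristic of) the resolution, so I would prefer to defer to the resolution theorem of Section 4.2 and present only the short deduction outlined above.
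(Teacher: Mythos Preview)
Your deduction is correct, but the paper takes a different and more elementary route that avoids the forward reference to Anick's resolution. Rather than working with $A$ itself, the paper passes to the associated monomial algebra $\hat{A}=\mathbb{K}\langle X\mid F\rangle$, for which the map $d_n:C_n\otimes\hat{A}\to C_{n-1}\otimes\hat{A}$ given simply by $d_n(gt\otimes a)=g\otimes ta$ already assembles into an exact sequence: in $\hat{A}$ a word vanishes precisely when it contains an element of $F$ as a subword, so no nontrivial reductions or homotopies are needed. One then applies the alternating-sum theorem to this sequence and uses $H_{\hat{A}}=H_A$ (they share the same normal words) to finish.

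The paper's approach is self-contained and in fact serves as the motivation for the full Anick resolution introduced afterward; your version, while valid, would make Theorem~4.1.2 depend on Theorem~4.2.1 rather than precede it. What your route buys is conceptual economy once Anick's resolution is in hand: the Hilbert-series identity then drops out for $A$ directly, without the detour through $\hat{A}$. Interestingly, the ``direct combinatorial argument'' you mention and set aside at the end is exactly what the paper does, and over $\hat{A}$ it is genuinely simpler than reconstructing Anick's resolution, since the differential is the naive tail-shift with no correction terms.
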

\begin{proof}
Let $\hat{A} = \mathbb{K}\langle X \hspace{1mm}|\hspace{1mm}F \rangle$ where $F$ is our set of leading terms. Let $d_n: C_n \otimes \hat{A} \rightarrow C_{n-1} \otimes \hat{A}$ be defined by $d_n(gt \otimes a) = g \otimes ta$. Then $d_n(d_{n+1}(gt \otimes a)) = d_n( g \otimes ta) = g' \otimes t'ta$, where $g \in C_n$, $g' \in C_{n-1}$. By the definition of $n$-chains it follows that there must be a leading term in $t'ta$, so this is $0$.\\ 
Also $d_n$ is a surjection, as for every $ g \otimes a \in C_{n-1} \otimes \hat{A}$, we can decompose $a$ as $a_1a_2$ such that $ga_1 \otimes a_2 \in C_n \otimes \hat{A}$. So we have an exact sequence:
\[ \begin{tikzcd}
 \dots \arrow{r}{d_{n+1}} & C_n \otimes \hat{A} \arrow{r}{d_n} & C_{n-1} \otimes \hat{A} \arrow{r}{d_{n-1}} & \dots \arrow{r}{d_0} & C_{-1} \otimes \hat{A} \arrow{r} & K \arrow{r} & 0
\end{tikzcd}\]
So by theorem 4.1.1
\[  \sum_{i=-1}^{\infty} (-1)^i H_{C_i \otimes \hat{A}} = -1.   \]
Now $H_{C_i \otimes \hat{A}} = H_{C_i}H_{\hat{A}} = H_{C_i}H_A$. Hence
\[  H_A = ( H_{C_{-1}} - H_{C_0} + H_{C_1} - H_{C_2} + \dots )^{-1}.    \]
\end{proof}
\begin{example}
Let $A = \mathbb{K}\langle x,y \hspace{1mm}|\hspace{1mm}x^2+y^2\rangle$. The leading terms of $A$ are $x^2$ and $xy^2$ (the Gr\"{o}bner basis of $A$ is $\{ x^2+y^2, xy^2 - y^2x \}$). We have found that the $n$-chains are given by $x^ny^2$ and $x^{n+1}$, for $n > 0$. So we have,
\[ H_A^{-1} = ( 1- 2t +(t^2+t^3) -(t^3+t^4) + \dots ) = 1-2t+t^2.    \]
Hence
\[  H_A = \frac{1}{1-2t+t^2}.   \]
\end{example}
\subsection{Anick's resolution}
In the proof of theorem 4.1.2, we have constructed a resolution for the normal words of our algebra. This allows us to find the Hilbert series for our normal words and hence for our original algebra. However this resolution only depends on normal words, so the rest of the structure of our algebra is lost. If we want more properties of the algebra then this leads to a construction of a resolution of the algebra itself. Along with this fact, that sequence and isomorphism of spaces with graduations $C_n \otimes \hat{A}$ and $C_n \otimes A$ leads us to think about the existence of a corresponding free resolution which was constructed by Anick in 1986 [15].\\ 
We will now state the resolution in form of a theorem. Though the resolution itself looks complicated but it will be clear with computations in various examples.
\begin{remark}
Let $f$ be a word. By $\overline{f}$ we denote the reduction of the word to a normal form. If a word $f$ cannot be reduced to a normal word and contain a leading term as a subword inside it then we write $\overline{f} = 0$.
\end{remark}
\begin{theorem}[Anick's resolution]
Let $A$ be a graded algebra with augmentation (i.e. there exists an augmentation map $\epsilon : A \rightarrow K$), and let $C_n$ be the set of $n$-chains. We have a resolution of $A$ of the following form:
\[ \begin{tikzcd}
\dots \arrow{r}{d_{n+1}} & C_n \otimes A \arrow{r}{d_n} & C_{n-1} \otimes A \arrow{r}{d_{n-1}} & C_{n-2} \otimes A \arrow{r}{d_{n-2}} & \dots \\
 \dots \arrow{r}{d_2} & C_1 \otimes A \arrow{r}{d_1} & C_0 \otimes A \arrow{r}{d_0} & C_{-1} \otimes A \arrow{r}{\epsilon} & K \arrow{r} & 0
\end{tikzcd}\]
with splitting inverse maps $i_n : \ker d_{n-1} \rightarrow C_n \otimes A$ (which, unlike $d_n$ need not to be homomorphisms of modules). Where:
\begin{itemize}
\item $d_0( x \otimes 1) = 1 \otimes x$.
\item $i_{-1}(1) = 1 \otimes 1$.
\item $i_0( 1 \otimes x_{i_1}x_{i_2}\dots x_{i_n}) = x_{i_1} \otimes x_{i_2}\dots x_{i_n}$.
\item $d_{n+1}(gt \otimes 1) = g \otimes t - i_nd_n( g \otimes t)$ for all ($n+1$)-chains $gt$, with tail $t$.
\item $i_n(u) = \alpha g \otimes c + i_n( u - \alpha d_n(g \otimes c))$ for all $u \in \ker d_{n-1}$ with leading term $f \otimes s$, where $\overline{fs} = \overline{gc}$, and $f$ is a ($n-1$)-chain and $g$ is a $n$-chain. The bar over $fs$ and $gc$ denotes reduction of them to normal forms.
\end{itemize}
\end{theorem}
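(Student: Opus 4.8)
The plan is to build the maps $d_n$ and the (non-$A$-linear) contracting homotopies $i_n$ by a single induction on $n$, carrying along an inductive package consisting of: (i) $d_n$ is a well-defined homomorphism of left $A$-modules; (ii) $d_{n-1}\circ d_n = 0$; (iii) with respect to a suitable well-ordering of the monomial basis of $C_{n-1}\otimes A$, the map $d_n$ is triangular, meaning $d_n(g\otimes 1) = g'\otimes t + (\text{strictly lower terms})$ whenever $g = g't$ is an $n$-chain with tail $t$ and underlying $(n-1)$-chain $g'$; and (iv) $i_n$ is well-defined on $\ker d_{n-1}$ and satisfies $d_n\circ i_n = \id_{\ker d_{n-1}}$. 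Once (i)--(iv) hold for every $n$, exactness is immediate: $\Ima d_n\subseteq\ker d_{n-1}$ by (ii), while conversely every $v\in\ker d_{n-1}$ equals $d_n(i_n(v))\in\Ima d_n$ by (iv); surjectivity of $\epsilon$ and $\ker\epsilon = \Ima d_0$ are obtained the same way from the explicit formulas for $\epsilon$, $d_0$, $i_{-1}$, $i_0$. Since $C_n\otimes A$ is visibly a free left $A$-module (on the $n$-chains tensored with $1$), this yields a free, hence projective, resolution of $K$.

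In the inductive step, $C_{n+1}\otimes A$ being free on $(n+1)$-chains, the prescription $d_{n+1}(gt\otimes 1) = g\otimes t - i_n d_n(g\otimes t)$ does define an $A$-module map --- provided $d_n(g\otimes t)\in\ker d_{n-1}$ so that $i_n$ applies, which is exactly (ii) at level $n$. Then $d_n\circ d_{n+1} = 0$ is formal: $d_n d_{n+1}(gt\otimes 1) = d_n(g\otimes t) - d_n i_n d_n(g\otimes t) = 0$ by (iv). Triangularity of $d_{n+1}$ holds because, by construction, $i_n d_n(g\otimes t)$ is a combination of basis monomials strictly below $g\otimes t$. So at each level there is only one genuinely new point: that the recursion defining $i_n$, namely $i_n(u) = \alpha\, g\otimes c + i_n\big(u - \alpha\, d_n(g\otimes c)\big)$, is well-posed and terminates.

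This is the crux, and it rests on a combinatorial property of chains. Fix a degree-preserving well-ordering of the monomial basis $\{g\otimes w : g \text{ an }n\text{-chain},\ w \text{ normal}\}$ of $C_n\otimes A$ compatible with right multiplication by normal words (for instance, compare the words $gw$ in the free algebra with ties broken by chain length). Let $0\neq u\in\ker d_{n-1}$, with leading monomial $\alpha\,(f\otimes s)$, where $f$ is an $(n-1)$-chain and $s$ is normal. The key claim is that the condition $d_{n-1}(u)=0$, which forces the contribution of $f\otimes s$ to cancel against something, obliges the word $rs$ (where $r$ is the tail of $f$) to contain a leading term from $F$; hence $s$ factors as $s = tc$ with $c$ normal and $g := ft$ an $n$-chain with tail $t$, and the minimality built into the definition of a chain ($\deg_F rt = 1$, occurring at the end) makes this extension $g$ unique. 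Then $gc = fs$ as words, so $\overline{gc} = \overline{fs}$, and by triangularity (iii) the element $d_n(g\otimes c)$ has leading monomial exactly $f\otimes s$ with coefficient $1$. Consequently $u - \alpha\, d_n(g\otimes c)$ still lies in $\ker d_{n-1}$ and has strictly smaller leading monomial; since the ordering is a well-ordering and everything remains in a fixed finite-dimensional graded component, the descent terminates, $i_n(u)$ is defined, and unwinding the recursion together with the inductive value of $d_n i_n$ on the lower terms gives $d_n(i_n(u)) = u$, which is (iv).

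I expect the main obstacle to be precisely this combinatorial claim --- that the leading monomial $f\otimes s$ of any element of $\ker d_{n-1}$ always extends to an $n$-chain $g = ft$ in a unique way --- since it requires understanding exactly which monomials can survive in $\ker d_{n-1}$, i.e. the overlap behaviour of the leading terms $F$ along chains; this is the structural fact that makes Anick's definition of chain the correct one. The remaining ingredients (freeness, $A$-linearity, $d^2 = 0$, extracting exactness from the contracting homotopy, and termination of the descent once its single step is justified) are formal or routine inductions.
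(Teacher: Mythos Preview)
Your proposal is correct and follows essentially the same inductive scheme as the paper: build $d_{n+1}$ and $i_n$ simultaneously, verify $d_n i_n = \id$ on $\ker d_{n-1}$ via the recursive definition of $i_n$, and then deduce $d_n d_{n+1} = 0$ formally. The paper's proof is actually briefer than yours and explicitly \emph{skips} the point you single out as the crux --- that $u - \alpha\, d_n(g\otimes c)$ has strictly smaller leading term, hence that the recursion defining $i_n$ terminates --- with the remark ``This is not hard to check, but is slightly lengthy, and so would clutter this already complicated proof.'' Your additional discussion of triangularity and of the combinatorial claim that the leading monomial $f\otimes s$ of a kernel element always extends uniquely to an $n$-chain is exactly what fills that gap, so in this respect your outline is more complete than the paper's own proof.
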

\begin{remark}
We define $d_n$ on free generators $f \otimes 1$ where $f$ is a $n$-chain and equals to $gt$.
\end{remark}
\begin{proof}
We will construct ($d_{n+1}, i_n$) by induction. Note that we only need to define $i_n$ on the kernel of $d_{n-1}$. Our induction hypothesis follows in 2 parts:
\begin{enumerate}
\item $d_{n-1}(d_n(u)) = 0$ for all $u$.
\item $d_{n-1}(i_{n-1}(u)) = 0$ for all $u \in \ker d_{n-2}$.
\end{enumerate}
We proceed by induction on the order of leading terms. Now we note that because of isomorphism from $C_nN$ to $C_n \otimes N$, the following partial order is defined:
\[  f \otimes t < g \otimes s \iff ft < gs.   \]
We have
\[ d_n(i_n(u)) = d_n( g \otimes c + i_n( u - \alpha d_n(g \otimes c)))   \]
\[  \hspace{10mm} = d_n(\alpha g \otimes c) + d_n(i_n(u - \alpha d_n( g \otimes c))). \]
Now we can verify that $u - \alpha d_n( g \otimes c)$ has a smaller leading term. This is not hard to check, but is slightly lengthy, and so would clutter this already complicated proof.
\[ d_n(i_n(u)) = d_n(\alpha g \otimes c) + d_n(i_n(u - \alpha d_n(g \otimes c)))  \]
\[ \hspace{30mm} = d_n(\alpha g \otimes c) + u - \alpha d_n(g \otimes c)   \]
\[  = u.  \]
Also
\[ d_n(d_{n+1}(gt \otimes 1) = d_n( g \otimes t - i_nd_n( g \otimes t))  \]
\[  = d_n(g \otimes t) - d_n(i_n(d_n(g \otimes t)))  \]
\[  = d_n(g \otimes t) - d_n( g \otimes t)  \]
\[  = 0 . \]
Here we need the fact that $d_n(u)$ is in $\ker d_{n-1}$, and we use the result we proved a moment ago.\\ 
Finally, $d_0$,$d_1$,$i_{-1}$ and $i_0$ satisfy the induction hypotheses, so by induction the two hypotheses hold true for all $n$. Together, they imply that the constructed sequence is a resolution.
\end{proof}

\subsection{Computation of Anick's resolution}
\begin{example}
We will compute Anick's resolution for algebra\\ 
\[A = \mathbb{K}\langle x,y \hspace{1mm}|\hspace{1mm}x^2=xy \rangle, \]
given DEGLEX ordering with $x>y$.\\
The Gr\"{o}bner basis of algebra $A$ is given by $\{ xy^nx - xy^{n+1}\hspace{2mm} \text{for all} \hspace{2mm} n \geq 0\}$. Hence the set of leading terms of the Gr\"{o}bner basis is given by $\{ xy^nx \hspace{2mm} \text{for all}\hspace{2mm} n \geq 0 \}$, which is our $F$ or $C_1$.
\begin{lemma}
The space spanned by $n$-chains, $C_n$ are of the form given in a tabular form:\\ 
\begin{center}
\begin{tabular}{|c|c|}
\hline
Chain   &  Form \\
$C_1$   &  $xy^{n_1}x$;\hspace{2mm}for all  $n_1 \geq 0$ \\
$C_2$   &  $xy^{n_1}xy^{n_2}x$;\hspace{2mm}for all  $n_1,n_2 \geq 0$ \\
$C_3$   &  $xy^{n_1}xy^{n_2}xy^{n_3}x$;\hspace{2mm}for all  $n_1,n_2,n_3 \geq 0$ \\
\vdots  &  \vdots \\
\vdots  &  \vdots \\
$C_k$   &  $xy^{n_1}xy^{n_2}x\dots y^{n_k}x$;\hspace{2mm}for all  $n_1,n_2,\dots,n_k \geq 0$ \\
\vdots  &  \vdots \\
\hline
\end{tabular}
\end{center}
\end{lemma}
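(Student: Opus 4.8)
The plan is to argue by induction on $n$, after first isolating two structural facts. We are told in the example above that the Gr\"obner basis of $A$ is $\{xy^nx-xy^{n+1}:n\ge 0\}$, so that $C_1=F=\{xy^nx:n\ge 0\}$; in particular every element of $F$ ends in the letter $x$ and contains exactly two occurrences of $x$. This is precisely the assertion for $C_1$. The second fact is a description of the normal words of $A$: a word is normal exactly when it contains no member of $F$ as a subword, but if a word had two occurrences of $x$ then its first two occurrences of $x$ would be separated only by copies of $y$ and hence would span a subword $xy^mx\in F$, while conversely any word with at most one $x$ avoids every $xy^nx$; so the normal words of $A$ are exactly $y^a$ and $y^axy^b$ with $a,b\ge 0$.

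It is convenient to prove the slightly stronger statement: for every $k\ge 1$ the $k$-chains are exactly the words $w_{n_1,\dots,n_k}:=xy^{n_1}xy^{n_2}x\cdots xy^{n_k}x$ with $n_1,\dots,n_k\ge 0$, and the tail of $w_{n_1,\dots,n_k}$ is $y^{n_k}x$ (this being well defined). For $k=1$ this holds: $C_1=F=\{xy^{n_1}x\}$, and since $xy^{n_1}x$ does not begin with $y$, the only factorization $gt$ with $g$ a $0$-chain is $g=x$, $t=y^{n_1}x$, so the tail is $y^{n_1}x$.

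For the inductive step, suppose $gt$ is a $(k+1)$-chain. Then $g$ is a $k$-chain, so by the inductive hypothesis $g=w_{n_1,\dots,n_k}$ with tail $r=y^{n_k}x$, and $t$ is a normal word with $\deg_F(rt)=1$, the unique $F$-subword of $rt$ occurring at its end. Since elements of $F$ end in $x$, the word $rt$ ends in $x$; and since $rt$ must contain an element of $F$ whereas $r=y^{n_k}x$ has only one $x$, the word $t$ is nonempty and ends in $x$. The classification of normal words then forces $t=y^{n_{k+1}}x$ for some $n_{k+1}\ge 0$. Conversely, for every such $t$ the word $rt=y^{n_k}xy^{n_{k+1}}x$ contains exactly one $F$-subword, namely $xy^{n_{k+1}}x$ at its end, so $w_{n_1,\dots,n_k}\cdot y^{n_{k+1}}x$ is indeed a $(k+1)$-chain with tail $y^{n_{k+1}}x$. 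As $gt=w_{n_1,\dots,n_k}\cdot y^{n_{k+1}}x=w_{n_1,\dots,n_{k+1}}$, the statement holds for $k+1$, and reading off the words alone gives the table.

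I expect the only real friction to be in the inductive step: one must rule out every normal $t$ other than $y^{n_{k+1}}x$ (the choice $t=y^a$ gives $\deg_F(rt)=0$, and $t=y^axy^b$ with $b>0$ may contain a member of $F$ but never one located at the \emph{end} of $rt$), confirm that each admissible $t$ contributes exactly one $F$-subword, and check that the prefix-tail factorization witnessing $w_{n_1,\dots,n_k}$ is unique so that ``the tail'' is unambiguous. Everything else follows directly from the definition of chains and the two structural facts above.
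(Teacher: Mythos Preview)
Your proof is correct. The paper itself does not prove this lemma: it is stated without justification inside Example~4.3.1, and the text immediately proceeds to the computation of the differentials $d_n$. Your inductive argument, built on the classification of normal words as $y^a$ and $y^axy^b$ together with the fact that every element of $F$ has the shape $xy^mx$, supplies precisely the verification the paper omits.

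The one point you flag as ``friction'' but do not fully write out is the uniqueness of the prefix--tail factorization in the inductive step. This is immediate by counting occurrences of $x$: a $k$-chain has exactly $k+1$ letters $x$ and ends in $x$, while $w_{n_1,\dots,n_{k+1}}$ has exactly $k+2$ letters $x$; hence any $k$-chain prefix of $w_{n_1,\dots,n_{k+1}}$ must terminate at the $(k{+}1)$-st $x$, forcing it to equal $w_{n_1,\dots,n_k}$ and the tail to equal $y^{n_{k+1}}x$.
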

Let us begin with the construction of maps.\\
As defined $d_0( x \otimes 1) = 1 \otimes x$ and $d_0( y \otimes 1) = 1 \otimes y$.\\ 
Now $d_1: C_1 \otimes A \rightarrow C_0 \otimes A$ given by
\begin{align}
d_1( xy^{n_1}x \otimes 1) & = x \otimes y^{n_1}x - i_0d_0( x \otimes y^{n_1}x) \nonumber \\ 
                          & = x \otimes y^{n_1}x - i_0( 1 \otimes \overline{xy^{n_1}x}) \nonumber \\ 
                          & = x \otimes y^{n_1}x - i_0( 1 \otimes xy^{n_1+1}) \nonumber \\
                          & = x \otimes y^{n_1}x - x \otimes y^{n_1 +1}
\end{align}
$d_2 : C_2 \otimes A \rightarrow C_1 \otimes A$ is given by
\begin{align}
d_2(xy^{n_1}xy^{n_2}x \otimes 1) & = xy^{n_1}x \otimes y^{n_2}x - i_1d_1(xy^{n_1}x \otimes y^{n_2}x) \nonumber \\
                                 & = xy^{n_1}x \otimes y^{n_2}x - i_1( x \otimes \overline{y^{n_1}xy^{n_2}x} - x \otimes \overline{y^{n_1+n_2+1}}x) \nonumber \\
                                 & = xy^{n_1}x \otimes y^{n_2}x - i_1( x \otimes y^{n_1}xy^{n_2+1} - x \otimes y^{n_1+n_2+1}x)  
\end{align}
%\newpage
We now deal with $i_1( x \otimes y^{n_1}xy^{n_2+1} - x \otimes y^{n_1+n_2+1}x)$. We see that $x \otimes y^{n_1}xy^{n_2+1}$ is our leading term in $( x \otimes y^{n_1}xy^{n_2+1} - x \otimes y^{n_1+n_2+1}x)$, and $x$ is a $0$-chain. We require a $1$-chain for our formula for $i_1$ and indeed it is $xy^{n_1}x$, hence proceeding we get,
\[ i_1( x \otimes y^{n_1}xy^{n_2+1} - x \otimes y^{n_1+n_2+1}x)  =\] 
\[ xy^{n_1}x \otimes y^{n_2+1}  + \] 
            \[     i_1(x \otimes y^{n_1}xy^{n_2+1} - x \otimes y^{n_1+n_2+1}x - d_1(xy^{n_1}x \otimes y^{n_2+1})) \] 
 \begin{equation}
 = xy^{n_1}x \otimes y^{n_2+1} + i_1( - x \otimes y^{n_1+n_2+1}x + x \otimes y^{n_1+n_2+2})   
\end{equation}
(Now we observe that $x \otimes y^{n_1+n_2+1}x$ is the leading term in $( - x \otimes y^{n_1+n_2+1}x + x \otimes y^{n_1+n_2+2})$ so we will deal with that term and as discussed earlier $x$ is $0$-chain so $xy^{n_1+n_2+1}$ is our $1$-chain and proceeding with the formula for $i_1$ we have obtained). Continuing with equation (12), 
\[  xy^{n_1}x \otimes y^{n_2+1} - xy^{n_1+n_2+1}x \otimes 1 +     \]
\[   \hspace{15mm} i_1( - x \otimes y^{n_1+n_2+1}x + x \otimes y^{n_1+n_2+2} + d_1( xy^{n_1+n_2+1}x \otimes 1))   \]
Now
\[ d_1( xy^{n_1+n_2+1}x \otimes 1) =  x \otimes y^{n_1+n_2+1}x - x \otimes y^{n_1+n_2+2}    \]
(obtained from the formula for $d_1$ in equation (10)), hence putting it in our main equation we get 
\[ i_1( - x \otimes y^{n_1+n_2+1}x + x \otimes y^{n_1+n_2+2} + d_1( xy^{n_1+n_2+1}x \otimes 1)) = 0.  \]
Hence 
\[ i_1( x \otimes y^{n_1}xy^{n_2+1} - x \otimes y^{n_1+n_2+1}x) = xy^{n_1}x \otimes y^{n_2+1} - xy^{n_1+n_2+1}x \otimes 1   \]
putting it in equation (11) we get,
\begin{equation}
d_2(xy^{n_1}xy^{n_2}x \otimes 1) \\
 = xy^{n_1}x \otimes y^{n_2}x - xy^{n_1}x \otimes y^{n_2+1} + xy^{n_1+n_2+1}x \otimes 1
\end{equation} 
We now work out the formula for  
\[d_3 : C_3 \otimes A \rightarrow C_2 \otimes A.\]
\[ d_3(xy^{n_1}xy^{n_2}xy^{n_3}x \otimes 1) \]
\begin{equation}
= xy^{n_1}xy^{n_2}x \otimes y^{n_3}x - i_2d_2(xy^{n_1}xy^{n_2}x \otimes y^{n_3}x)   
\end{equation}
We will deal with $i_2d_2(xy^{n_1}xy^{n_2}x \otimes y^{n_3}x)$. Now
\[ i_2d_2(xy^{n_1}xy^{n_2}x \otimes y^{n_3}x) =  \]
\[  i_2( xy^{n_1}x \otimes \overline{y^{n_2}xy^{n_3}x} - xy^{n_1}x \otimes \overline{y^{n_2+n_3+1}x} + xy^{n_1+n_2+1}x \otimes y^{n_3}x)   \]
\begin{equation} = i_2( xy^{n_1}x \otimes y^{n_2}xy^{n_3+1} - xy^{n_1}x \otimes y^{n_2+n_3+1}x + xy^{n_1+n_2+1}x \otimes y^{n_3}x)   \end{equation}
We see that $xy^{n_1}xy^{n_2}xy^{n_3+1}$ is the leading term and $xy^{n_1}x$ is a $1$-chain, so $xy^{n_1}xy^{n_2}x$ will be a $2$-chain satisfying our requirement, hence proceeding with equation (15) we get,
\[  xy^{n_1}xy^{n_2}x \otimes y^{n_3+1} +   \]
\[ i_2( xy^{n_1}x \otimes y^{n_2}xy^{n_3+1} - xy^{n_1}x \otimes y^{n_2+n_3+1}x + xy^{n_1+n_2+1}x \otimes y^{n_3}x \]
\[- d_2(xy^{n_1}xy^{n_2}x \otimes y^{n_3+1}))    \]
Now 
\[ d_2(xy^{n_1}xy^{n_2}x \otimes y^{n_3+1}) = xy^{n_1}x \otimes y^{n_2}xy^{n_3+1} - xy^{n_1}x \otimes y^{n_2+n_3+2}   \]
\[ + xy^{n_1+n_2+1}x \otimes y^{n_3+1}    \]
putting the value of $d_2$ back in $i_2$ and cancelling the term $xy^{n_1}x \otimes y^{n_2}xy^{n_3+1}$ we get,
\[ i_2( - xy^{n_1}x \otimes y^{n_2+n_3+1}x + xy^{n_1+n_2+1}x \otimes y^{n_3}x     \]
\begin{equation}  + xy^{n_1}x \otimes y^{n_2+n_3+2} - xy^{n_1+n_2+1}x \otimes y^{n_3+1})  \end{equation}
So $xy^{n_1}xy^{n_2+n_3+1}x$ from $xy^{n_1}x \otimes y^{n_2+n_3+1}x$ is our leading term and so we proceed with equation (16) and get,
\[  -xy^{n_1}xy^{n_2+n_3+1}x \otimes 1   \]
\[ + i_2(- xy^{n_1}x \otimes y^{n_2+n_3+1}x + xy^{n_1+n_2+1}x \otimes y^{n_3}x   \]
\[  + xy^{n_1}x \otimes y^{n_2+n_3+2} - xy^{n_1+n_2+1}x \otimes y^{n_3+1} + d_2(xy^{n_1}xy^{n_2+n_3+1}x \otimes 1 )) \]
We have
\[  d_2(xy^{n_1}xy^{n_2+n_3+1}x \otimes 1 ) = xy^{n_1}x \otimes y^{n_2+n_3+1}x - xy^{n_1}x \otimes y^{n_2+n_3+2}  \]
\[  + xy^{n_1+n_2+n_3+2}x \otimes 1   \]
so putting the value of $d_2$ we get terms $xy^{n_1}x \otimes y^{n_2+n_3+1}x$ and $xy^{n_1}x \otimes y^{n_2+n_3+2}$ get cancelled and we have, 
\[ i_2( xy^{n_1+n_2+1}x \otimes y^{n_3}x - xy^{n_1+n_2+1}x \otimes y^{n_3+1} + xy^{n_1+n_2+n_3+2}x \otimes 1)  \]
\[ = xy^{n_1+n_2+1}xy^{n_3}x \otimes 1      \]
\[ + i_2(xy^{n_1+n_2+1}x \otimes y^{n_3}x - xy^{n_1+n_2+1}x \otimes y^{n_3+1} + xy^{n_1+n_2+n_3+2}x \otimes 1   \]
\[  - d_2(xy^{n_1+n_2+1}xy^{n_3}x \otimes 1 ))   \]
Now
\[ d_2(xy^{n_1+n_2+1}xy^{n_3}x \otimes 1 ) = -xy^{n_1+n_2+1}x \otimes y^{n_3}x + xy^{n_1+n_2+1}x \otimes y^{n_3+1}    \]
\[  - xy^{n_1+n_2+n_3+2}x \otimes 1   \]
so all terms get cancelled and we left with $xy^{n_1+n_2+1}xy^{n_3}x \otimes 1 $. Putting it in equation (14) we get,
\[ d_3(xy^{n_1}xy^{n_2}xy^{n_3}x \otimes 1) =    \]
\[ xy^{n_1}xy^{n_2}x \otimes y^{n_3}x - xy^{n_1}xy^{n_2}x \otimes y^{n_3+1}      \]
\begin{equation}
 + xy^{n_1}xy^{n_2+n_3+1}x \otimes 1 - xy^{n_1+n_2+1}xy^{n_3}x \otimes 1
\end{equation}
\begin{proposition}
\[ d_k : C_k \otimes A \rightarrow C_{k-1} \otimes A    \]
for $k \geq 2$ is given by
\[ d_k(xy^{n_1}xy^{n_2}x \dots y^{n_k}x \otimes 1) =   \]
\[ xy^{n_1}xy^{n_2}x \dots y^{n_{k-1}}x \otimes y^{n_k}x  -  xy^{n_1}x \dots y^{n_{k-1}}x \otimes y^{n_k+1}    \]
\[ xy^{n_1}xy^{n_2}x \dots y^{n_{k-1}+n_k+1}x \otimes 1 - xy^{n_1}xy^{n_2}x \dots y^{n_{k-2}+n_{k-1}+1}xy^{n_k}x \otimes 1   \]
\begin{equation}
+  \dots + (-1)^{k-1} xy^{n_1+n_2+1}xy^{n_3}x \dots y^{n_k}x \otimes 1
\end{equation}
\end{proposition}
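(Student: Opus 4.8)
The plan is to prove the formula by induction on $k$: the base case $k=2$ is equation (13) (and $k=3$ is equation (17), a useful sanity check), while the inductive step $k-1\to k$ for $k\ge 3$ runs Anick's recursion from Theorem 4.2.1, using the description of $C_k$ in Lemma 4.3.1 and the formula for $d_{k-1}$ supplied by the inductive hypothesis.

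First I would record the reductions special to this algebra. In any word over $\{x,y\}$ the symbols between two occurrences of $x$ are all $y$'s, so a word is normal iff it contains at most one $x$; hence the normal words are exactly the $y^a$ and the $y^axy^b$, and, collapsing the blocks $xy^jx\mapsto xy^{j+1}$ from the right, $\overline{y^{a_0}xy^{a_1}x\cdots y^{a_m}x}=y^{a_0}xy^{a_1+\cdots+a_m+m}$ for $m\ge 1$ (while $\overline{y^ax}=y^ax$). These identities are what let me rewrite every expression $d_{k-1}(h\otimes w)$ as a normalized sum of terms $(\text{$(k-2)$-chain})\otimes(\text{normal word})$.

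For the inductive step I would decompose the $k$-chain $xy^{n_1}x\cdots y^{n_k}x$ as $gt$ with $g=xy^{n_1}x\cdots y^{n_{k-1}}x$ and tail $t=y^{n_k}x$ (forced by Lemma 4.3.1 together with the degree condition defining chains), so that Anick's recursion gives $d_k(gt\otimes 1)=g\otimes t-i_{k-1}\bigl(d_{k-1}(g\otimes t)\bigr)$. Using that $d_{k-1}$ is a right $A$-module map I expand $d_{k-1}(g\otimes t)=d_{k-1}(g\otimes 1)\cdot y^{n_k}x$ by the inductive hypothesis and normalize each summand with the identities above, obtaining a fixed, explicitly indexed element of $C_{k-2}\otimes A$. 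The core of the proof is to evaluate $i_{k-1}$ on this element by running its recursive definition: at each stage one picks the leading term $f\otimes s$ (for the order $f\otimes s<g'\otimes s'\iff fs<g's'$), matches it to the unique $(k-1)$-chain $g'$ and word $c$ with $\overline{g'c}=\overline{fs}$, subtracts $\alpha\,d_{k-1}(g'\otimes c)$ — again expanded by the inductive hypothesis — and repeats. One then shows this telescopes: each subtraction annihilates the current leading term, emits exactly one ``merge adjacent blocks'' term $xy^{n_1}x\cdots y^{n_j+n_{j+1}+1}x\cdots y^{n_k}x\otimes 1$, and strictly lowers the leading term, so after finitely many steps the process halts with the claimed sum and the alternating signs of (13) and (17).

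The main obstacle is precisely this bookkeeping of the $i_{k-1}$-recursion, and it has two parts. First, one must prove that at each stage the leftmost block of the chain-writing is genuinely the DEGLEX-leading monomial; I would isolate this as a short combinatorial sublemma comparing same-degree words of the shape $xy^{a_1}x\cdots y^{a_r}x\cdot(\text{normal word})$, whose content is that merging an interior $x$ into a longer $y$-block strictly lowers the word in LEX order (at the first position where the two words differ, the un-merged one shows an $x$ and the merged one a $y$, and $x>y$). Second, one must verify that the partial sums really telescope with no spurious survivors; the explicit $d_3$ computation already shows how delicate the cancellations are, so the general case needs a carefully chosen invariant describing the state of the $i_{k-1}$-recursion after each step, together with a clean induction on the number of blocks to pin down the signs. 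Termination of the recursion, and the fact that the output is the differential of a genuine resolution, are then immediate from Theorem 4.2.1.
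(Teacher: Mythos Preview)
Your proposal is correct and follows essentially the same approach as the paper: induction on $k$ with base cases $k=2,3$, then unwinding $d_k(gt\otimes 1)=g\otimes t-i_{k-1}d_{k-1}(g\otimes t)$ by expanding $d_{k-1}$ via the inductive hypothesis and running the $i_{k-1}$-recursion step by step. The paper organizes the cancellation bookkeeping slightly differently, tabulating how many terms are cancelled and how many remain after each successive application of $i_{k-1}$ (a $2,4,6,\ldots$ pattern terminating after $k$ steps), whereas you phrase it as a telescoping process that emits one ``merge'' term per step; these are the same computation viewed two ways, and your identification of the leading-term check and the sign/telescoping invariant as the two points requiring care is exactly where the paper's argument is most sketchy.
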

\begin{proof}
We prove the general formula given by equation (18) by method of induction.\\ 
We have already done for $k = 2,3$ given in equation (13) and (17) respectively. We assume that up to $k-1$ the formula is valid i.e. 
\[ d_{k-1}: C_{k-1} \otimes A \rightarrow C_{k-2} \otimes A    \]
is given by 
\[ d_{k-1}( xy^{n_1}xy^{n_2}\dots xy^{n_{k-1}}x \otimes 1) =    \]
\[  xy^{n_1}xy^{n_2} \dots xy^{n_{k-2}}x \otimes y^{n_{k-1}}x - xy^{n_1}xy^{n_2}\dots xy^{n_{k-2}}x \otimes y^{n_{k-1}+1}   \]
\[  + xy^{n_1}x \dots y^{n_{k-2}+n_{k-1}+1}x \otimes 1 - xy^{n_1}x \dots y^{n_{k-3}+n_{k-2}+1}xy^{n_{k-1}}x \otimes 1    \]
\[ + \dots + (-1)^{k-2} xy^{n_1+n_2+1}xy^{n_3} \dots y^{n_{k-1}}x \otimes 1   \]
We will find the formula for $d_k: C_k \otimes A \rightarrow C_{k-1} \otimes A$ using the previous data.
\[  d_k(xy^{n_1}xy^{n_2}x \dots y^{n_k}x \otimes 1)  \]
\begin{equation}
= xy^{n_1}x \dots y^{n_{k-1}}x \otimes y^{n_k}x - i_{k-1}d_{k-1}(xy^{n_1}x \dots y^{n_{k-1}}x \otimes y^{n_k}x)   
\end{equation}
As by our previous formula for $d_{k-1}$ we have,
\[ d_{k-1}(xy^{n_1}x \dots y^{n_{k-1}}x \otimes y^{n_k}x) =    \]
\[ xy^{n_1}xy^{n_2}\dots y^{n_{k-2}}x \otimes y^{n_{k-1}}xy^{n_k+1} - xy^{n_1}xy^{n_2}\dots y^{n_{k-2}}x \otimes y^{n_{k-1}+n_k+1}x     \]
\[ + xy^{n_1}x \dots xy^{n_{k-2}+n_{k-1}+1}x \otimes y^{n_k}x + \dots     \]
\[ \dots + (-1)^{k-2} xy^{n_1+n_2+1}xy^{n_3}\dots xy^{n_{k-1}}x \otimes y^{n_k}x     \]
putting this value in equation (19) we have
\[ i_{k-1}( xy^{n_1}xy^{n_2}\dots y^{n_{k-2}}x \otimes y^{n_{k-1}}xy^{n_k+1}    \]
\[  - xy^{n_1}xy^{n_2}\dots y^{n_{k-2}}x \otimes y^{n_{k-1}+n_k+1}x + xy^{n_1}x \dots xy^{n_{k-2}+n_{k-1}+1}x \otimes y^{n_k}x + \dots      \]
\[ + (-1)^{k-2} xy^{n_1+n_2+1}xy^{n_3}\dots xy^{n_{k-1}}x \otimes y^{n_k}x )    \]
we have $xy^{n_1}xy^{n_2}\dots y^{n_{k-2}}xy^{n_{k-1}}xy^{n_k+1}$ as our leading term and so,
\[ = xy^{n_1}xy^{n_2} \dots y^{n_{k-2}}xy^{n_{k-1}}x \otimes y^{n_k+1}     \]
\begin{equation}
 + i_{k-1}( \text{all previous terms of}\hspace{1mm} i_{k-1} - d_{k-1}(xy^{n_1}xy^{n_2} \dots y^{n_{k-2}}xy^{n_{k-1}}x \otimes y^{n_k+1} ))
\end{equation}
Now
\[ d_{k-1}(xy^{n_1}xy^{n_2} \dots y^{n_{k-2}}xy^{n_{k-1}}x \otimes y^{n_k+1} ) =     \]
\[ xy^{n_1}xy^{n_2}\dots y^{n_{k-2}}x \otimes y^{n_{k-1}}xy^{n_k+1} - xy^{n_1}x \dots y^{n_{k-2}}x \otimes y^{n_{k-1}+n_k+2}   \]
\[  + xy^{n_1}x \dots xy^{n_{k-2}+n-{k-1}+1}x \otimes y^{n_k+1} - \dots    \]
\[ \dots + (-1)^{k-2} xy^{n_1+n_2+1}xy^{n_3}\dots y^{n_{k-1}}x \otimes y^{n_k+1}   \]
putting this value of $d_{k-1}$ in equation (20) we see that $xy^{n_1}xy^{n_2}\dots y^{n_{k-2}}x \otimes y^{n_{k-1}}xy^{n_k+1}$ term is getting cancelled and we proceed with remaining terms of $i_{k-1}$.\\ 
So we have noticed that when we apply $i_{k-1}$ for the first time total $2$ terms get cancelled and we left with $(k-1) +(k-1)= 2(k-1)$ terms.
Now
\[ i_{k-1}( -xy^{n_1}x \dots  y^{n_{k-2}}x \otimes y^{n_{k-1}+n_k+1}x + xy^{n_1}x \dots y^{n_{k-2}+n_{k-1}+1}x \otimes y^{n_k}x   \]
\[ - \dots  + (-1)^{k-2} xy^{n_1+n_2+1}x \dots y^{n_{k-1}}x \otimes y^{n_k}x \]
\[  + xy^{n_1}\dots xy^{n_{k-2}}x \otimes y^{n_{k-1}+n_k+2} - \dots    \]
\[  + \dots + (-1)^{k-2} xy^{n_1+n_2+1}x \dots y^{n_{k-1}}x \otimes y^{n_k+1} )    \]
We have $-xy^{n_1}x \dots  y^{n_{k-2}}xy^{n_{k-1}+n_k+1}x$ as our leading term and hence
\[  = - xy^{n_1}x \dots y^{n_{k-1}+n_k+1}x \otimes 1   \]
\begin{equation}
  + i_{k-1}( \text{all terms from previous}\hspace{1mm}i_{k-1} + d_{k-1}(xy^{n_1}x \dots y^{n_{k-1}+n_k+1}x \otimes 1 ))
\end{equation}
We have
\[ d_{k-1}(xy^{n_1}x \dots y^{n_{k-1}+n_k+1}x \otimes 1 ) = xy^{n_1}\dots y^{n_{k-2}}x \otimes y^{n_{k-1}+n_k+1}x    \]
\[ - xy^{n_1}\dots y^{n_{k-2}}x \otimes y^{n_{k-1}+n_k+2} + \dots     \]
\[  \dots + (-1)^{k-2} xy^{n_1+n_2+1}x \dots y^{n_{k-1}+n_k+1}x \otimes 1  \]
putting the value in equation (21) we get that $xy^{n_1}\dots y^{n_{k-2}}x \otimes y^{n_{k-1}+n_k+1}x$ and $xy^{n_1}\dots y^{n_{k-2}}x \otimes y^{n_{k-1}+n_k+2}$ get cancelled and we left with remaining terms to proceed.\\ 
So here after second application of $i_{k-1}$ we observe that total $4$ terms get canceled and we left with $(3k-2)$ number of terms.\\ 
We proceed with the rest of the structure of $i_{k-1}$ and here after 3rd application of it observe that total $6$ terms get canceled and we left with $4(k-3)$ number of terms. We will make a tabular form which will easily show us how many terms get cancelled after each $i_{k-1}$ operation and the remaining terms.
\begin{center}
\begin{tabular}{|c|c|c|}
\hline
Number of use of $i_{k-1}$    &  Total canceled terms      &  remaining terms for next $i_{k-1}$   \\ 
1                             &        2                   &  $2(k-1)$ \\
2                             &        4                   &  $3(k-2)$ \\
3                             &        6                   &  $4(k-3)$ \\
4                             &        8                   &  $5(k-4)$ \\
\vdots                        &      \vdots                &  \vdots   \\
$k$                           &       $2k$                 &  $k+1(k-k) = 0$ \\
\hline
\end{tabular}
\end{center}
So we observe that after $k$ steps of using $i_{k-1}$ we actually cancel all terms in the last used $i_{k-1}$ and so left with no terms. In each of these $k$ steps we get $1$ term (like in equation (19),(20),(21) etc) which will go to our main formula of $d_k$ and so
\[ d_k(xy^{n_1}xy^{n_2}x \dots y^{n_k}x \otimes 1) =   \]
\[ xy^{n_1}xy^{n_2}x \dots y^{n_{k-1}}x \otimes y^{n_k}x  -  xy^{n_1}x \dots y^{n_{k-1}}x \otimes y^{n_k+1}    \]
\[ xy^{n_1}xy^{n_2}x \dots y^{n_{k-1}+n_k+1}x \otimes 1 - xy^{n_1}xy^{n_2}x \dots y^{n_{k-2}+n_{k-1}+1}xy^{n_k}x \otimes 1   \]
\[ +  \dots + (-1)^{k-1} xy^{n_1+n_2+1}xy^{n_3}x \dots y^{n_k}x \otimes 1. \] 
 Our induction is complete and hence the proof.
\end{proof}
\end{example}
\subsection{Computation of Anick's resolution for algebra $B_n$}
Conner and Goetz in their paper \textit{$A_\infty$-algebra structures associated to $K_2$ algebras}[8] define algebra $B_n$ for each $n \in \mathbb{N}$. Algebra $B_n$ for each $n$ is a $K_2$ algebra which was recently introduced by Cassidy and Shelton[10] as a generalization of notion of Koszul algebra[7]  .\\ 
Algebra $B_n$ consists of a set of generators $\{a_0,b_0,c_0,a_1,b_1,c_1,\dots,a_n,b_n,c_n \}$ and a set of relations given by
\[ \{a_nb_nc_n, c_0a_0 \} \cup \{a_ib_ic_i + c_{i+1}a_{i+1}b_{i+1},b_{i+1}c_{i+1}a_{i+1},c_ic_{i+1},b_{i+1}a_i\}_{0 \leq i < n}    \]
We take the DEGLEX order with
\[ a_n > b_n > c_n > a_{n-1} > b_{n-1} > c_{n-1} > \dots > a_1 > b_1 > c_1 > a_0 > b_0 > c_0   \]
\begin{lemma}
The reduced Gr\"{o}bner basis of $B_n$ is given by the set of it's relations described above.
\end{lemma}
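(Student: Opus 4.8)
The plan is to invoke the non-commutative Diamond Lemma (Lemma~3.5.2). The set $G$ of listed relations generates the defining ideal $I$ of $B_n$ by construction, so $(G)=I$, and it suffices to (a)~identify the leading terms, (b)~verify that every overlap ambiguity of $G$ reduces to $0$ modulo $G$, and (c)~check the two reducedness conditions of Definition~3.4.1.

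First I would determine the leading terms. Each of $a_nb_nc_n$, $c_0a_0$, $b_{i+1}c_{i+1}a_{i+1}$, $c_ic_{i+1}$, $b_{i+1}a_i$ is already a monomial, hence its own leading term with leading coefficient $1$. The only non-monomial relations are the cubics $g_i := a_ib_ic_i + c_{i+1}a_{i+1}b_{i+1}$; both summands have degree $3$, and reading left to right the first letters $a_i$ and $c_{i+1}$ satisfy $c_{i+1}>a_i$ in the chosen order, so $\lt(g_i)=c_{i+1}a_{i+1}b_{i+1}$, again with leading coefficient $1$. Thus the leading-term set $F$ consists of $a_nb_nc_n$, $c_0a_0$ and, for $0\le i<n$, the words $c_{i+1}a_{i+1}b_{i+1}$, $b_{i+1}c_{i+1}a_{i+1}$, $c_ic_{i+1}$, $b_{i+1}a_i$. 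A direct inspection of letter patterns shows no member of $F$ is a proper subword of another, so there are no inclusion ambiguities; moreover the only non-leading monomial occurring in any $g\in G$ is $a_ib_ic_i$, which contains no element of $F$ as a subword and is therefore normal. Consequently, once $G$ is shown to be a Gr\"{o}bner basis, it is automatically reduced.

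It remains to examine the overlap ambiguities. Since all generators of $G$ except the $g_i$ are monomials, an overlap between two monomial relations produces an S-polynomial equal to $0$ (e.g.\ $a_nb_nc_n$ with $b_nc_na_n$, or $c_ic_{i+1}$ with $c_{i+1}c_{i+2}$), so only overlaps in which $\lt(g_i)=c_{i+1}a_{i+1}b_{i+1}$ participates require work. On the left these are: the prefix $c_{i+1}a_{i+1}$ meeting the suffix of $b_{i+1}c_{i+1}a_{i+1}$, and the prefix $c_{i+1}$ meeting the suffix of $c_ic_{i+1}$. On the right: the suffix $b_{i+1}$ meeting the prefix of $b_{i+1}a_i$, and the suffix $b_{i+1}$ meeting the prefix of $b_{i+1}c_{i+1}a_{i+1}$. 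Finally there are the two overlaps of $c_na_nb_n$ with $a_nb_nc_n$ occurring when $i=n-1$. In each case I would form the S-polynomial, use the cubic relation to express $c_{i+1}a_{i+1}b_{i+1}$ through $-a_ib_ic_i$, and observe that the resulting word contains one of the monomial leading terms $b_{i+1}a_i$, $c_{i-1}c_i$, $b_{i-1}c_{i-1}a_{i-1}$ or $c_0a_0$, so it reduces to $0$. For instance, the overlap $b_{i+1}c_{i+1}a_{i+1}b_{i+1}$ gives the S-polynomial $-b_{i+1}a_ib_ic_i$, killed by $b_{i+1}a_i$; the overlap $c_{i+1}a_{i+1}b_{i+1}a_i$ gives $a_ib_ic_ia_i$, killed by $b_ic_ia_i$ when $i\ge1$ and by $c_0a_0$ when $i=0$; the overlap $c_ic_{i+1}a_{i+1}b_{i+1}$ gives $-c_ia_ib_ic_i$, where $c_ia_ib_i$ in turn rewrites (via $g_{i-1}$) to $-a_{i-1}b_{i-1}c_{i-1}$, producing a word with the subword $c_{i-1}c_i$ when $i\ge1$, while for $i=0$ the word $c_0a_0b_0c_0$ already contains $c_0a_0$.

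With all S-polynomials reduced to $0$ modulo $G$, Lemma~3.5.2 shows $G$ is a Gr\"{o}bner basis of $I$, and together with the remarks of the second paragraph, $G$ is the reduced Gr\"{o}bner basis of $B_n$. The only genuine difficulty here is organisational: making the overlap enumeration manifestly exhaustive, and handling the boundary indices $i=0$ and $i=n$ separately, since there the ``generic'' monomial relation used to finish a reduction is unavailable and must be replaced by $c_0a_0$ or $a_nb_nc_n$.
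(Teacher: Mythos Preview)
Your proof is correct and follows the same approach as the paper, namely an application of Bergman's Diamond Lemma together with the observation that the non-leading terms are normal. In fact you carry out the overlap analysis explicitly (and handle the boundary cases $i=0$ and $i=n-1$ carefully), whereas the paper merely states that the Diamond Lemma applies and defers to the references for details.
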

\begin{proof}
Let us denote the set of generators of algebra $B_n$ by $X$ and set of relations by $R$. The basis $X$ generates a free semigroup $\langle X\rangle$ with $1$ under the multiplication in $T(V)$, where $V$ is the $\mathbb{K}$-vector space with basis $X$ and $T(V)$ is the tensor algebra. We have the canonical quotient map $\pi_{B_n} : T(V) \rightarrow B_n$ of graded $\mathbb{K}$-algebras. Let $R' \subset \langle X \rangle$ be the set
\[ \{ a_nb_nc_n, c_0a_0 \} \cup \{c_{i+1}a_{i+1}b_{i+1}, b_{i+1}c_{i+1}a_{i+1}, c_ic_{i+1}, b_{i+1}a_i \}_{0 \leq i < n}  \]
and
\[ \langle R' \rangle = \{ AWB \hspace{2mm}|\hspace{2mm} A,B \in \langle X \rangle, W \in R'     \} \]
Then the following proposition is a straightforward application of Bergman's Diamond Lemma [11].
\begin{proposition}
The image under $\pi_{B_n}$ of $\langle X \rangle - \langle R' \rangle$, the set of tensors which do not contain any element of $R'$, gives a monomial $\mathbb{K}$-basis for $B_n$.
\end{proposition}
Proposition 4.4.1 implies that $R$ is the reduced Gr\"{o}bner basis of algebra $B_n$ [9]. For more details of the proof we refer to [8].
\end{proof}
Using the Gr\"{o}bner basis for $B_n$ we now start computing Anick's resolution for $B_1$, $B_2$ and will later reach the general formula for $B_n$.\\ 
Let us begin with the computation for $B_1$:
\subsubsection{Case $n=1$}
Algebra $B_1$ is given by the set of generators $\{a_0,b_0,c_0,a_1,b_1,c_1\}$ and the set of relations 
\[ \{a_1b_1c_1, c_0a_0, a_0b_0c_0 + c_1a_1b_1, b_1c_1a_1, c_0c_1, b_1a_0\}.   \]
Lemma 4.4.1 suggests that the Gr\"{o}bner basis for $B_1$ is given by the set of relations, and so the set of leading words $F$ or $1$-chain $C_1$ are
\[  a_1b_1c_1;\hspace{2mm} c_0a_0;\hspace{2mm} c_1a_1b_1;\hspace{2mm} b_1c_1a_1;\hspace{2mm} c_0c_1;\hspace{2mm} b_1a_0.  \]
We cannot construct higher chains from $c_0a_0$ and $b_1a_0$ and so we proceed with the rest to construct 2-chains. So elements of $C_2$ are 
\[ a_1b_1c_1a_1;\hspace{2mm} c_1a_1b_1c_1;\hspace{2mm} c_1a_1b_1a_0;\hspace{2mm} b_1c_1a_1b_1;\hspace{2mm} c_0c_1a_1b_1.   \]
We observe that we cannot produce higher chains from $c_1a_1b_1a_0$ and so we will form $C_3$ from the remaining elements of $C_2$. So elements of $C_3$ are,
\[ a_1b_1c_1a_1b_1c_1;\hspace{2mm} c_1a_1b_1c_1a_1b_1;\hspace{2mm} b_1c_1a_1b_1c_1a_1;\hspace{2mm} b_1c_1a_1b_1a_0;   \]
\[  c_0c_1a_1b_1c_1;\hspace{2mm} c_0c_1a_1b_1a_0   \]
Elements of $C_4$ are,
\[ a_1b_1c_1a_1b_1c_1a_1;\hspace{2mm} c_1a_1b_1c_1a_1b_1c_1;\hspace{2mm} c_1a_1b_1c_1a_1b_1a_0;  \]
\[ b_1c_1a_1b_1c_1a_1b_1;\hspace{2mm} c_0c_1a_1b_1c_1a_1b_1    \]
Elements of $C_5$ are,
\[ a_1b_1c_1a_1b_1c_1a_1b_1c_1;\hspace{2mm} c_1a_1b_1c_1a_1b_1c_1a_1b_1;\hspace{2mm} b_1c_1a_1b_1c_1a_1b_1a_0;   \]
\[ b_1c_1a_1b_1c_1a_1b_1c_1a_1;\hspace{2mm} c_0c_1a_1b_1c_1a_1b_1a_0;\hspace{2mm} c_0c_1a_1b_1c_1a_1b_1c_1.   \]
Looking at the constructions of $C_2$, $C_3$, $C_4$, $C_5$ we construct the $n$-chain.
\begin{lemma}
Elements of $C_n$ are of the form:\\ 
when $n$ is odd i.e. $n=2m+1$:
\[ a_1b_1c_1a_1b_1c_1\dots a_1b_1c_1; \hspace{2mm}\text{appearance of}\hspace{2mm} a_1b_1c_1 = m+1\hspace{2mm} \text{times}   \]
\[ c_1a_1b_1c_1a_1b_1 \dots c_1a_1b_1; \hspace{2mm}\text{appearance of}\hspace{2mm} c_1a_1b_1 = m+1\hspace{2mm} \text{times}    \]
\[ b_1c_1a_1b_1c_1a_1\dots b_1c_1a_1; \hspace{2mm}\text{appearance of}\hspace{2mm} b_1c_1a_1 = m+1\hspace{2mm} \text{times}   \]
\[ b_1c_1a_1b_1c_1a_1\dots b_1c_1a_1b_1a_0;\hspace{2mm}\text{appearance of}\hspace{2mm} b_1c_1a_1 = m\hspace{2mm} \text{times}   \]
\[ c_0c_1a_1b_1c_1a_1b_1\dots c_1a_1b_1c_1;\hspace{2mm}\text{appearance of}\hspace{2mm} c_1a_1b_1 = m\hspace{2mm} \text{times}  \]
\[ c_0c_1a_1b_1\dots c_1a_1b_1a_0; \hspace{2mm}\text{appearance of}\hspace{2mm} c_1a_1b_1 = m\hspace{2mm} \text{times}.   \]
When $n$ is even i.e. $n=2m$:
\[ a_1b_1c_1a_1b_1c_1 \dots a_1b_1c_1a_1;\hspace{2mm}\text{appearance of}\hspace{2mm} a_1b_1c_1 = m\hspace{2mm} \text{times}  \]
\[ c_1a_1b_1c_1a_1b_1 \dots c_1a_1b_1c_1;\hspace{2mm}\text{appearance of}\hspace{2mm} c_1a_1b_1 = m\hspace{2mm} \text{times}  \]
\[ c_1a_1b_1c_1a_1b_1 \dots c_1a_1b_1a_0;\hspace{2mm}\text{appearance of}\hspace{2mm} c_1a_1b_1 = m\hspace{2mm} \text{times}  \]
\[ b_1c_1a_1b_1c_1a_1 \dots b_1c_1a_1b_1;\hspace{2mm}\text{appearance of}\hspace{2mm} b_1a_1c_1 = m\hspace{2mm} \text{times}  \]
\[ c_0c_1a_1b_1 \dots c_1a_1b_1;\hspace{2mm}\text{appearance of}\hspace{2mm} c_1a_1b_1 = m\hspace{2mm} \text{times}.  \]
\end{lemma}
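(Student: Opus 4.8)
The plan is to argue by induction on $n$, directly from the recursive definition of an $n$-chain, while carrying along one piece of data the statement leaves implicit: the tail of each chain. Concretely, I would strengthen the statement to record that the six odd families (in the order written, with $n = 2m+1$) have tails $b_1c_1$, $a_1b_1$, $c_1a_1$, $a_0$, $c_1$, $a_0$, and that the five even families (with $n = 2m$) have tails $a_1$, $c_1$, $a_0$, $b_1$, $a_1b_1$. The base case is $n = 1$, where $C_1 = F = \{a_1b_1c_1,\, c_1a_1b_1,\, b_1c_1a_1,\, b_1a_0,\, c_0c_1,\, c_0a_0\}$; each of these is a $1$-chain of the form $gt$ with $g$ a generator, so its tail is the word obtained by deleting the first letter, which matches the claimed tails with $m = 0$.

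For the inductive step, suppose the claim holds for $(n-1)$-chains. Every $n$-chain is of the form $gt$ with $g$ an $(n-1)$-chain, hence, by hypothesis, $g$ is one of the listed words with a known tail $r$; and $gt$ is an $n$-chain precisely when $t$ is a normal word such that $rt$ contains exactly one occurrence of a leading word, that occurrence being a suffix of $rt$. The key point is that a normal $t$ contains no leading word, so the unique leading word of $rt$ must overlap the tail $r$ on the left; since every leading word of $B_1$ has length at most $3$ and each recorded tail has length $1$ or $2$, only finitely many leading words $w$ can occur, and for each of them $t$ is forced to be the short suffix of $w$ outside $r$. So per family one lists the admissible $w$, forms the corresponding $t$, and checks that $t$ is normal (this is where the rotations $a_1b_1c_1, b_1c_1a_1, c_1a_1b_1$ and the short words $c_0c_1, b_1a_0$ are forbidden as factors of $t$), that $w$ is the only leading word inside $rt$, and that no strictly longer admissible $t$ exists (a longer $t$ would either create a second leading word or push the unique one off the suffix). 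Doing this family by family produces exactly the transitions visible in the two lists: the first, second, third and fifth odd families extend to even families --- the second one in the two ways $t = c_1$ and $t = a_0$, the others in a single way --- while the fourth and sixth odd families cannot be extended, since no leading word begins with $a_0$; dually the first, second, fourth and fifth even families extend to odd families, with the fourth and fifth splitting into two extensions each, while the third even family is terminal. Matching the index $m$ on the two sides and recomputing the tail of each newly produced chain closes the induction.

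Two bookkeeping observations keep the argument honest and also explain the shape of the answer: the letter $c_0$ is never a factor of any tail, so a chain containing $c_0$ must begin with $c_0$ and therefore lies in one of the $c_0$-families, which is why no ``mixed'' chains arise; and the letter $b_0$ occurs in no leading word, hence is inert, which is why it appears in no chain of positive length. The main obstacle is the exhaustiveness half of the inductive step --- showing the listed extensions are the \emph{only} normal words that work. The forward direction (each listed word is genuinely an $n$-chain) is a one-line check per family, but excluding every other candidate $t$ really does need the normality constraint on $t$ together with the overlap combinatorics of $F$; the way to keep this from becoming an unreadable case explosion is to group $F$ as $\{$the three cyclic rotations of $a_1b_1c_1\} \cup \{c_0c_1,\, b_1a_0,\, c_0a_0\}$ and to organize the analysis by the first letter of $t$.
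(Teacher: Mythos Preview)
Your induction is correct, and in fact more than the paper provides. In the paper this lemma is stated without proof: the author explicitly computes $C_1$ through $C_5$ by hand, writes ``Looking at the constructions of $C_2$, $C_3$, $C_4$, $C_5$ we construct the $n$-chain'', and then simply records the pattern as a lemma. So there is nothing to compare against on the paper's side beyond pattern-matching from small cases.

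Your strengthening to track tails is exactly the right inductive load, and your transition analysis is accurate: from the six odd families only those with tail $b_1c_1$, $a_1b_1$, $c_1a_1$, $c_1$ extend (the $a_1b_1$ one splitting into $t=c_1$ and $t=a_0$), yielding the five even families; from the five even families only those with tail $a_1$, $c_1$, $b_1$, $a_1b_1$ extend (the last two each splitting in two), yielding the six odd families. The exhaustiveness argument via ``the unique leading word in $rt$ must overlap $r$'' is the correct mechanism, and your observation that no leading word begins with $a_0$ cleanly kills the terminal families. One small thing worth making explicit when you write it out: for tails of length~$2$ you must also rule out extensions coming from leading words that overlap $r$ in only its last letter (e.g.\ for tail $b_1c_1$, the leading word $c_1a_1b_1$ gives a candidate $t=a_1b_1$, but then $rt=b_1c_1a_1b_1$ contains two leading words) --- you allude to this but it is where most of the case-checking lives.
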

We have the chains, so we begin calculating $d_n$. We start with\\ 
$d_1: C_1 \otimes B_1 \rightarrow C_0 \otimes B_1$,
\[ d_1(a_1b_1c_1 \otimes 1) = a_1 \otimes b_1c_1  \]
\[ d_1(c_0a_0 \otimes 1) = c_0 \otimes a_0 \]
\[ d_1(c_1a_1b_1 \otimes 1) = c_1 \otimes a_1b_1 - a_0 \otimes b_0c_0  \]
\[ d_1(b_1c_1a_1 \otimes 1) = b_1 \otimes c_1a_1   \]
\[ d_1(c_0c_1 \otimes 1) = c_0 \otimes c_1   \]
\[ d_1(b_1a_0 \otimes 1) = b_1 \otimes a_0. \]
$d_2: C_2 \otimes B_1 \rightarrow C_1 \otimes B_1$ is given by,
\[ d_2(a_1b_1c_1a_1 \otimes 1) = a_1b_1c_1 \otimes a_1 - i_1d_1(a_1b_1c_1 \otimes a_1)   \]
\[ = a_1b_1c_1 \otimes a_1 - i_1(a_1 \otimes \overline{b_1c_1a_1})   \]
\[ = a_1b_1c_1 \otimes a_1, \hspace{2mm}\text{as}\hspace{1mm}b_1c_1a_1\hspace{1mm}\text{cannot be reduced to a normal word.} \]
\[ d_2(c_1a_1b_1c_1 \otimes 1) = c_1a_1b_1 \otimes c_1 - i_1d_1(c_1a_1b_1 \otimes c_1) \]
\[ = c_1a_1b_1 \otimes c_1 - i_1( c_1 \otimes \overline{a_1b_1c_1} - a_0 \otimes \overline{b_0c_0c_1})   \]
\[ = c_1a_1b_1 \otimes c_1 \]
\[ d_2(c_1a_1b_1a_0 \otimes 1) = c_1a_1b_1 \otimes a_0 - i_1d_1( c_1a_1b_1 \otimes a_0)   \]
\[ = c_1a_1b_1 \otimes a_0 - i_1( c_1 \otimes \overline{a_1b_1a_0} - a_0 \otimes \overline{b_0c_0a_0})    \]
\[ = c_1a_1b_1 \otimes a_0        \]
\[ d_2(b_1c_1a_1b_1 \otimes 1) = b_1c_1a_1 \otimes b_1 - i_1d_1(b_1c_1a_1 \otimes b_1)   \]
\[  = b_1c_1a_1 \otimes b_1 - i_1(b_1 \otimes \overline{c_1a_1b_1})    \]
\[ = b_1c_1a_1 \otimes b_1 + i_1(b_1 \otimes a_0b_0c_0)  \]
\[ = b_1c_1a_1 \otimes b_1 + b_1a_0 \otimes b_0c_0 + i_1( b_1 \otimes a_0b_0c_0 - d_1(b_1a_0 \otimes b_0c_0))   \]
\[ = b_1c_1a_1 \otimes b_1 + b_1a_0 \otimes b_0c_0   \]
\[ d_2(c_0c_1a_1b_1 \otimes 1) = c_0c_1 \otimes a_1b_1 - i_1d_1(c_0c_1 \otimes a_1b_1)  \]
\[ = c_0c_1 \otimes a_1b_1 - i_1( c_0 \otimes \overline{c_1a_1b_1})  \]
\[ = c_0c_1 \otimes a_1b_1 + i_1(c_0 \otimes a_0b_0c_0)   \]
\[ = c_0c_1 \otimes a_1b_1 + c_0a_0 \otimes b_0c_0 + i_1(c_0 \otimes a_0b_0c_0 - d_1(c_0a_0 \otimes b_0c_0))   \]
\[ = c_0c_1 \otimes a_1b_1 + c_0a_0 \otimes b_0c_0   \]
$d_3: C_3 \otimes B_1 \rightarrow C_2 \otimes B_1$ is given by,
\[ d_3(a_1b_1c_1a_1b_1c_1 \otimes 1) = a_1b_1c_1a_1 \otimes b_1c_1 - i_2d_2(a_1b_1c_1a_1 \otimes b_1c_1)    \]
\[ = a_1b_1c_1a_1 \otimes b_1c_1 - i_2(a_1b_1c_1 \otimes \overline{a_1b_1c_1})    \]
\[ = a_1b_1c_1a_1 \otimes b_1c_1   \]
\[ d_3(c_1a_1b_1c_1a_1b_1 \otimes 1) = c_1a_1b_1c_1 \otimes a_1b_1 - i_2d_2(c_1a_1b_1c_1 \otimes a_1b_1)    \]
\[ = c_1a_1b_1c_1 \otimes a_1b_1 - i_2( c_1a_1b_1 \otimes \overline{c_1a_1b_1})    \]
\[ = c_1a_1b_1c_1 \otimes a_1b_1 + i_2(c_1a_1b_1 \otimes a_0b_0c_0)    \]
\[ = c_1a_1b_1c_1 \otimes a_1b_1 + c_1a_1b_1a_0 \otimes b_0c_0 + i_2( c_1a_1b_1 \otimes a_0b_0c_0 - d_2(c_1a_1b_1a_0 \otimes b_0c_0))   \]
\[ = c_1a_1b_1c_1 \otimes a_1b_1 + c_1a_1b_1a_0 \otimes b_0c_0   \]
\[ d_3(b_1c_1a_1b_1c_1a_1 \otimes 1) = b_1c_1a_1b_1 \otimes c_1a_1 - i_2d_2(b_1c_1a_1b_1 \otimes c_1a_1)  \]
\[ = b_1c_1a_1b_1 \otimes c_1a_1 - i_2(b_1c_1a_1 \otimes \overline{b_1c_1a_1} + b_1a_0 \otimes \overline{b_0c_0c_1a_1})   \]
\[ = b_1c_1a_1b_1 \otimes c_1a_1   \]
\[ d_3(b_1c_1a_1b_1a_0 \otimes 1) = b_1c_1a_1b_1 \otimes a_0 - i_2d_2(b_1c_1a_1b_1 \otimes a_0)   \]
\[ = b_1c_1a_1b_1 \otimes a_0 - i_2( b_1c_1a_1 \otimes \overline{b_1a_0} + b_1a_0 \otimes \overline{b_0c_0a_0})  \]
\[ = b_1c_1a_1b_1 \otimes a_0   \]
\[ d_3(c_0c_1a_1b_1c_1 \otimes 1) = c_0c_1a_1b_1 \otimes c_1 - i_2d_2( c_0c_1a_1b_1 \otimes c_1)  \]
\[ = c_0c_1a_1b_1 \otimes c_1 - i_2(c_0c_1 \otimes \overline{a_1b_1c_1} + c_0a_0 \otimes \overline{b_0c_0c_1}   \]
\[ = c_0c_1a_1b_1 \otimes c_1   \]
\[ d_3(c_0c_1a_1b_1a_0 \otimes 1) = c_0c_1a_1b_1 \otimes a_0 - i_2d_2(c_0c_1a_1b_1 \otimes a_0)   \]
\[ = c_0c_1a_1b_1 \otimes a_0 - i_2( c_0c_1 \otimes \overline{a_1b_1a_0} + c_0a_0 \otimes \overline{b_0c_0a_0})    \]
\[ = c_0c_1a_1b_1 \otimes a_0  \]
\newpage 
$d_4: C_4 \otimes B_1 \rightarrow C_3 \otimes B_1$ is given by,
\[ d_4(a_1b_1c_1a_1b_1c_1a_1 \otimes 1) = a_1b_1c_1a_1b_1c_1 \otimes a_1 - i_3d_3(a_1b_1c_1a_1b_1c_1 \otimes a_1)   \]
\[ = a_1b_1c_1a_1b_1c_1 \otimes a_1 - i_3( a_1b_1c_1a_1 \otimes \overline{b_1c_1a_1})   \]
\[ = a_1b_1c_1a_1b_1c_1 \otimes a_1   \]
\[ d_4(c_1a_1b_1c_1a_1b_1c_1 \otimes 1) = c_1a_1b_1c_1a_1b_1 \otimes c_1 - i_3d_3(c_1a_1b_1c_1a_1b_1 \otimes c_1)   \]
\[ = c_1a_1b_1c_1a_1b_1 \otimes c_1 - i_3( c_1a_1b_1c_1 \otimes \overline{a_1b_1c_1} + c_1a_1b_1a_0 \otimes \overline{b_0c_0c_1})   \]
\[ = c_1a_1b_1c_1a_1b_1 \otimes c_1   \]
\[ d_4(b_1c_1a_1b_1c_1a_1b_1 \otimes 1) = b_1c_1a_1b_1c_1a_1 \otimes b_1 - i_3d_3(b_1c_1a_1b_1c_1a_1 \otimes b_1)     \]
\[ = b_1c_1a_1b_1c_1a_1 \otimes b_1 - i_3( b_1c_1a_1b_1 \otimes \overline{c_1a_1b_1})     \]
\[ =  b_1c_1a_1b_1c_1a_1 \otimes b_1 - i_3( b_1c_1a_1b_1 \otimes a_0b_0c_0)   \]
\[ = b_1c_1a_1b_1c_1a_1 \otimes b_1 + b_1c_1a_1b_1a_0 \otimes b_0c_0 - i_3(b_1c_1a_1b_1 \otimes a_0b_0c_0 - d_3(b_1c_1a_1b_1a_0 \otimes b_0c_0))     \]
\[ =b_1c_1a_1b_1c_1a_1 \otimes b_1 + b_1c_1a_1b_1a_0 \otimes b_0c_0  \]
\[ d_4(c_1a_1b_1c_1a_1b_1a_0 \otimes 1) = c_1a_1b_1c_1a_1b_1 \otimes a_0 - i_3d_3(c_1a_1b_1c_1a_1b_1 \otimes a_0)   \]
\[ = c_1a_1b_1c_1a_1b_1 \otimes a_0 - i_3(c_1a_1b_1c_1 \otimes \overline{a_1b_1a_0} + c_1a_1b_1a_0 \otimes \overline{b_0c_0a_0})   \]
\[ = c_1a_1b_1c_1a_1b_1 \otimes a_0    \]
\[ d_4(c_0c_1a_1b_1c_1a_1b_1 \otimes 1) = c_0c_1a_1b_1c_1 \otimes a_1b_1 - i_3d_3(c_0c_1a_1b_1c_1 \otimes a_1b_1)    \]
\[ = c_0c_1a_1b_1c_1 \otimes a_1b_1 - i_3( c_0c_1a_1b_1 \otimes \overline{c_1a_1b_1})   \]
\[ = c_0c_1a_1b_1c_1 \otimes a_1b_1 + c_0c_1a_1b_1a_0 \otimes b_0c_0 + i_3( c_0c_1a_1b_1 \otimes a_0b_0c_0 - d_3(c_0c_1a_1b_1a_0 \otimes b_0c_0))  \]
\[ = c_0c_1a_1b_1c_1 \otimes a_1b_1 + c_0c_1a_1b_1a_0 \otimes b_0c_0 .    \]
After observing the formula for $d_1$, $d_2$, $d_3$ and $d_4$ we can directly write the formula for $d_n$ which is as follows:
\begin{proposition}
\[ d_n : C_n \otimes B_1 \rightarrow C_{n-1} \otimes B_1   \]
When $n$ is odd i.e. $n = 2m+1$:
\begin{equation}
d_n(a_1b_1c_1a_1b_1c_1 \dots a_1b_1c_1 \otimes 1) = a_1b_1c_1 \dots a_1b_1c_1a_1 \otimes b_1c_1
\end{equation}
\begin{equation}
d_n(c_1a_1b_1 \dots c_1a_1b_1 \otimes 1) = c_1a_1b_1 \dots c_1a_1b_1c_1 \otimes a_1b_1 + c_1a_1b_1 \dots c_1a_1b_1a_0 \otimes b_0c_0
\end{equation}
\begin{equation}
d_n(b_1c_1a_1 \dots b_1c_1a_1 \otimes 1) = b_1c_1a_1 \dots b_1c_1a_1b_1 \otimes c_1a_1
\end{equation}
\begin{equation}
d_n(b_1c_1a_1 \dots b_1c_1a_1b_1a_0 \otimes 1) = b_1c_1a_1 \dots b_1c_1a_1b_1 \otimes a_0
\end{equation}
\begin{equation}
d_n(c_0c_1a_1b_1 \dots c_1a_1b_1c_1 \otimes 1) = c_0c_1a_1b_1c_1 \dots a_1b_1 \otimes c_1
\end{equation}
\begin{equation}
d_n(c_0c_1a_1b_1 \dots c_1a_1b_1a_0 \otimes 1) = c_0c_1a_1b_1 \dots c_1a_1b_1 \otimes a_0.
\end{equation}
\newpage 
When $n$ is even i.e. $n = 2m$:
\begin{equation}
d_n(a_1b_1c_1 \dots a_1b_1c_1a_1 \otimes 1) = a_1b_1c_1 \dots a_1b_1c_1 \otimes a_1 
\end{equation}
\begin{equation}
d_n(c_1a_1b_1 \dots c_1a_1b_1c_1 \otimes 1) = c_1a_1b_1 \dots c_1a_1b_1 \otimes c_1 
\end{equation}
\begin{equation}
d_n(c_1a_1b_1 \dots c_1a_1b_1a_0 \otimes 1) = c_1a_1b_1 \dots c_1a_1b_1 \otimes a_0 
\end{equation}
\[ d_n(b_1c_1a_1 \dots b_1c_1a_1b_1 \otimes 1) = b_1c_1a_1 \dots b_1c_1a_1 \otimes b_1 +    \]
\begin{equation}
b_1c_1a_1 \dots b_1c_1a_1b_1a_0 \otimes b_0c_0
\end{equation}
\[  d_n(c_0c_1a_1b_1 \dots c_1a_1b_1 \otimes 1) = c_0c_1a_1b_1 \dots c_1 \otimes a_1b_1 +  \]
\begin{equation}
c_0c_1a_1b_1 \dots c_1a_1b_1a_1 \otimes b_0c_0.
\end{equation}
\end{proposition}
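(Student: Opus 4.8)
The plan is to establish the formulas by induction on $n$, taking the explicitly computed differentials $d_1,d_2,d_3,d_4$ above as the base cases. The two facts from Theorem 4.2.1 that drive the argument are that each $d_n$ is a $B_1$-module homomorphism, hence determined by its values on the free generators $f\otimes 1$ with $f$ an $n$-chain, and the recursion $d_n(gt\otimes 1)=g\otimes t-i_{n-1}d_{n-1}(g\otimes t)$, where $gt$ is an $n$-chain written with its tail $t$ and $(n-1)$-chain prefix $g$. Together with the recursive description of $i_{n-1}$, this turns the computation of $d_n$ on each of the chain families of Lemma 4.4.3 into a finite, if delicate, bookkeeping problem.

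Concretely, for each family of $n$-chains I would proceed as follows. First, read off the decomposition $gt$: for example the odd-index family $a_1b_1c_1\cdots a_1b_1c_1$ decomposes with $g=a_1b_1c_1\cdots a_1b_1c_1a_1$ an $(n-1)$-chain and tail $t=b_1c_1$, and for the remaining families the tail is one of $a_1,b_1,c_1,a_1b_1,b_1c_1,c_1a_1,a_0$. Second, compute $d_{n-1}(g\otimes t)=d_{n-1}(g\otimes 1)\cdot t$ from the inductive formula for $d_{n-1}$ and reduce each coefficient word to normal form; by Remark 4.2.1 any word still containing a leading term becomes $0$, and the only genuine reduction available in $B_1$ is $c_1a_1b_1\mapsto-a_0b_0c_0$, so almost all terms vanish at this stage. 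Third, apply $i_{n-1}$ to the surviving element by unwinding $i_{n-1}(u)=\alpha g'\otimes c+i_{n-1}\bigl(u-\alpha d_{n-1}(g'\otimes c)\bigr)$: at each step one peels off the current leading term $f\otimes s$ (with $f$ an $(n-2)$-chain) together with the $(n-1)$-chain $g'$ satisfying $\overline{fs}=\overline{g'c}$. Fourth, observe the telescoping: each round of $i_{n-1}$ contributes exactly one chain-tensor term to the final answer while the terms of $d_{n-1}(g'\otimes c)$ that it produces cancel against what remains, exactly as in the table in the proof of Proposition 4.3.1, so after finitely many rounds nothing survives inside $i_{n-1}$. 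Reassembling the surviving terms with $g\otimes t$ yields the asserted formula.

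The main obstacle is the third and fourth steps: controlling the cancellations in the recursive unwinding of $i_{n-1}$. The complication specific to $B_1$ is that the reduction $c_1a_1b_1\mapsto-a_0b_0c_0$ links different chain families — the families whose chains end in $a_0$ arise precisely by applying this reduction to families ending in $\cdots b_1$ — so the families cannot be analysed in complete isolation. I expect the most economical route is to isolate this in a preliminary lemma computing $i_{n-1}$ on the few elements of the form $(\text{chain})\otimes a_0b_0c_0$ that actually occur, showing that such an $i_{n-1}$ telescopes down to a single chain-tensor term, and then feed this into the main computation. A direct check that the resulting $d_n$ satisfy $d_{n-1}d_n=0$ can serve as a consistency test, though it is automatic from Theorem 4.2.1 since the $d_n$ are exactly the differentials of that resolution.
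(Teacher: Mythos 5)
Your proposal is correct and follows the same route the paper intends: induction on $n$ starting from the explicitly computed $d_1,\dots,d_4$, using the recursion $d_n(gt\otimes 1)=g\otimes t-i_{n-1}d_{n-1}(g\otimes t)$ and the telescoping unwinding of $i_{n-1}$ as in the proof of Proposition 4.3.1. The paper's own proof consists of the single sentence that the formula follows ``with the help of induction and combinatorics,'' so your sketch in fact supplies the details the paper omits --- in particular the observation that $c_1a_1b_1\mapsto -a_0b_0c_0$ is the only nontrivial reduction and is what couples the chain families ending in $a_0$ to those ending in $b_1$.
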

\begin{proof}
For both even and odd case we can easily prove the general formula $d_n$ with the help of induction and combinatorics.
\end{proof}

\subsubsection{Case $n=2$}
Algebra $B_2$ is given by 9 generators and 10 relations and so the Gr\"{o}bner basis is given by:
\[ \{ a_2b_2c_2, c_0a_0, a_0b_0c_0 + c_1a_1b_1, a_1b_1c_1 + c_2a_2b_2, b_1c_1a_1   \}    \]
\[ \cup \{ b_2c_2a_2, c_0c_1, c_1c_2, b_1a_0, b_2a_1   \}   \]
So the elements of chain $C_1$ are 
\[ a_2b_2c_2,\hspace{2mm} c_0a_0,\hspace{2mm} c_1a_1b_1,\hspace{2mm}c_2a_2b_2,\hspace{2mm}b_1c_1a_1,  \]
\[ b_2c_2a_2,\hspace{2mm} c_0c_1,\hspace{2mm} c_1c_2,\hspace{2mm} b_1a_0,\hspace{2mm} b_2a_1   \]
Elements of chain $C_2$ are
\[ a_2b_2c_2a_2,\hspace{2mm} c_1a_1b_1a_0,\hspace{2mm} c_1a_1b_1c_1a_1,\hspace{2mm} c_2a_2b_2c_2, \]
\[ c_2a_2b_2a_1,\hspace{2mm} b_1c_1a_1b_1,\hspace{2mm} b_2c_2a_2b_2,\hspace{2mm} c_0c_1a_1b_1,\hspace{2mm} c_1c_2a_2b_2    \]
Similarly the elements of $C_3$ which are constructed from $C_2$ are,
\[ a_2b_2c_2a_2b_2c_2,\hspace{2mm} c_1a_1b_1c_1a_1b_1,\hspace{2mm} c_2a_2b_2c_2a_2b_2,\hspace{2mm} b_1c_1a_1b_1a_0,   \]
\[ b_1c_1a_1b_1c_1a_1,\hspace{2mm} b_2c_2a_2b_2c_2a_2,\hspace{2mm} b_2c_2a_2b_2a_1,\hspace{2mm} c_0c_1a_1b_1a_0,   \]
\[ c_0c_1a_1b_1c_1a_1,\hspace{2mm} c_1c_2a_2b_2a_1,\hspace{2mm} c_1c_2a_2b_2c_2   \]
The elements of $C_4$ are
\[ a_2b_2c_2a_2b_2c_2a_2,\hspace{2mm} c_1a_1b_1c_1a_1b_1a_0,\hspace{2mm} c_1a_1b_1c_1a_1b_1c_1a_1,   \]
\[ c_2a_2b_2c_2a_2b_2a_1,\hspace{2mm} c_2a_2b_2c_2a_2b_2c_2,\hspace{2mm} b_1c_1a_1b_1c_1a_1b_1,\hspace{2mm} b_2c_2a_2b_2c_2a_2b_2,    \]
\[ c_1c_2a_2b_2c_2a_2b_2,\hspace{2mm} c_0c_1a_1b_1c_1a_1b_1   \]
\newpage 
\begin{remark}
In this subsection of computation of Anick's resolution of $B_2$ as well as in remaining subsections while computing Anick's resolution of $B_n$ for $n=3, \dots$; when we will write an element of $C_n$ like some $b_{k+1}a_kb_kc_k \dots a_kb_kc_ka_{k-1}$, by '$\dots$', it will imply the consecutive appearance of $a_kb_kc_k$, like one we described in algebra $B_1$. 
\end{remark}
\begin{lemma}
We now write the general formula for elements of chain $C_n$. When $n$ is odd elements of $C_n$ are,
\[ a_2b_2c_2a_2b_2c_2 \dots a_2b_2c_2,\hspace{2mm} c_1a_1b_1c_1a_1b_1 \dots c_1a_1b_1,\hspace{2mm} c_2a_2b_2c_2a_2b_2 \dots c_2a_2b_2,   \]
\[ b_1c_1a_1 \dots b_1c_1a_1b_1a_0,\hspace{2mm} b_1c_1a_1b_1c_1a_1 \dots b_1c_1a_1,\hspace{2mm} b_2c_2a_2 \dots b_2c_2a_2,   \]
\[ b_2c_2a_2 \dots b_2c_2a_2b_2a_1,\hspace{2mm} c_0c_1a_1b_1c_1a_1b_1 \dots c_1a_1b_1a_0,\hspace{2mm} c_0c_1a_1b_1 \dots c_1a_1b_1c_1a_1,   \]
\[ c_1c_2a_2b_2 \dots c_2a_2b_2a_1,\hspace{2mm} c_1c_2a_2b_2 \dots c_2a_2b_2c_2.   \]
When $n$ is even, elements of $C_n$ are,
\[ a_2b_2c_2 \dots a_2b_2c_2a_2,\hspace{2mm} c_1a_1b_1 \dots c_1a_1b_1a_0,\hspace{2mm} c_1a_1b_1 \dots c_1a_1b_1c_1a_1,   \]
\[ b_1c_1a_1 \dots b_1c_1a_1b_1,\hspace{2mm} b_2c_2a_2 \dots b_2c_2a_2b_2,\hspace{2mm} c_0c_1a_1b_1 \dots c_1a_1b_1,  \]
\[ c_1c_2a_2b_2 \dots c_2a_2b_2,\hspace{2mm} c_2a_2b_2 \dots c_2a_2b_2c_2,\hspace{2mm} c_2a_2b_2 \dots c_2a_2b_2a_1.  \]
\end{lemma}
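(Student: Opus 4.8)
The plan is to prove both formulas by induction on $n$, using the explicit enumerations of $C_1,C_2,C_3,C_4$ obtained in the preceding subsection as the base of the induction. Recall the defining recursion: an $n$-chain is a word $gt$ in which $g\in C_{n-1}$ has tail $r$, the factor $t$ is a normal word, and $rt$ contains exactly one leading term of $B_2$ as a subword, occurring at the very end of $rt$; the tail of $gt$ is then $t$. Thus everything is controlled by a single finite piece of bookkeeping: for each word $r$ that can arise as the tail of a chain of $B_2$ --- and, just as for $B_1$, every such $r$ has length at most two --- one lists the normal words $t$ for which $rt$ contains a unique leading term, placed at its end.

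The first step is therefore to extract this overlap data from the reduced Gr\"obner basis of Lemma 4.4.1, that is, from the way the ten leading terms of $B_2$ overlap one another; one must keep in mind that the two remaining summands $a_1b_1c_1$ and $a_0b_0c_0$ of the relations $a_1b_1c_1+c_2a_2b_2$ and $a_0b_0c_0+c_1a_1b_1$ are normal words, not leading terms, and so must be treated as ordinary factors (never as obstructions) inside chains. I would record the outcome as a finite directed graph $\Gamma$ whose vertices are the admissible tails and with an edge $r\to t$ for each legal extension; then an $n$-chain is precisely a walk of length $n-1$ in $\Gamma$ starting from the tail of a $1$-chain. Inspecting $\Gamma$ one sees it is built from a handful of short cycles: one period-two cycle carrying the family $a_2b_2c_2\,a_2b_2c_2\cdots$; two further period-two cycles coming from the cyclic rotations $c_2a_2b_2$ and $b_2c_2a_2$, each with a side-branch to the absorbing tail $a_1$ via the leading term $b_2a_1$; the analogous (and, since $a_1b_1c_1$ is not a leading term, slightly smaller) cycle among the subscript-$1$ generators with its side-branch to the absorbing tail $a_0$ via $b_1a_0$; and two entry vertices, fed by the $1$-chains $c_0c_1$ and $c_1c_2$, that flow into the subscript-$1$ resp.\ subscript-$2$ cycles. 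Enumerating all walks of length $n-1$ then yields the forms claimed --- $11$ of them for odd $n$, $9$ for even $n$ --- with the parity alternation being exactly the phase of these cycles; as a sanity check these counts agree with $|C_3|=11$ and $|C_4|=9$ computed above.

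With $\Gamma$ in hand the induction is routine: assuming $C_{n-1}$ is the displayed list, one applies the out-edges of $\Gamma$ at the tail of each displayed form to check that every extension is again a displayed form of $C_n$, of the opposite parity, and conversely that every displayed $n$-chain comes from a unique displayed $(n-1)$-chain; together with the base cases this proves the lemma. The one genuinely delicate part is the construction of $\Gamma$ in the first step --- deciding precisely which extensions are legal. The leading terms of $B_2$ overlap in several near-cyclic patterns, so for each candidate $t$ one must verify that every \emph{illegal} $t$ introduces a second leading term somewhere in the interior of $rt$ (so that $\deg_F(rt)=1$ fails), while the legal ones never do, and that the normal but non-leading blocks $a_1b_1c_1$ and $a_0b_0c_0$ neither obstruct nor manufacture chains. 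Once that finite verification is carried out, nothing further is needed.
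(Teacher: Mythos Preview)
Your approach is correct and, in fact, considerably more thorough than what the paper does. In the paper this lemma is simply \emph{stated} after the explicit lists of $C_1,\dots,C_4$; no proof is given at all --- the author relies on the reader to recognise the pattern from those small cases, just as was done for the analogous Lemma~4.4.2 in the $B_1$ case. So there is no ``paper's own proof'' to compare against.

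Your graph-theoretic organisation is the standard and right way to make this rigorous: the graph $\Gamma$ you describe is essentially the Ufnarovskii overlap graph for the set $F$ of leading terms, and it is a general fact (implicit in Anick's original paper and explicit in \cite{VA}) that $n$-chains correspond to length-$(n{-}1)$ walks in this graph starting from the tails of $1$-chains. Your identification of the delicate point --- that $a_1b_1c_1$ and $a_0b_0c_0$ are \emph{normal} words, not leading terms, so the subscript-$1$ cycle has one fewer rotation than the subscript-$2$ cycle --- is exactly right and is the reason the odd/even lists have $11$ and $9$ members rather than being symmetric. The finite verification you flag (that each candidate extension $t$ introduces exactly one leading term, at the end) is genuinely the whole content; once $\Gamma$ is written down correctly the induction is mechanical.

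One small sharpening: rather than asserting that all tails have length at most two and then checking it case by case, you can observe directly that the leading terms of $B_2$ all have length two or three, so the tail of any chain is the suffix of some leading term minus its first letter, hence of length one or two. This bounds the vertex set of $\Gamma$ a priori and removes any circularity from the argument.
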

Now we will start our computations of Anick's resolution.
\[ d_1: C_1 \otimes B_2 \rightarrow C_0 \otimes B_2  \]
\[ d_1(a_2b_2c_2 \otimes 1) = a_2 \otimes b_2c_2 - i_0d_0(a_2 \otimes b_2c_2)   \]
\[ = a_2 \otimes b_2c_2 - i_0( 1 \otimes \overline{a_2b_2c_2})   \]
\[ = a_2 \otimes b_2c_2  \]
\[ d_1(c_0a_0 \otimes 1) = c_0 \otimes a_0  \]
\[ d_1(c_1a_1b_1 \otimes 1) = c_1 \otimes a_1b_1 - i_0d_0( c_1 \otimes a_1b_1)   \]
\[ = c_1 \otimes a_1b_1 - i_0(1 \otimes \overline{c_1a_1b_1})   \]
\[ = c_1 \otimes a_1b_1 + a_0 \otimes b_0c_0  \]
\[ d_1(c_2a_2b_2 \otimes 1) = c_2 \otimes a_2b_2 + a_1 \otimes b_1c_1  \]
\[ d_1(b_1c_1a_1 \otimes 1) = b_1 \otimes c_1a_1 - i_0d_0(b_1 \otimes c_1a_1)  \]
\[ = b_1 \otimes c_1a_1 - i_0(1 \otimes \overline{b_1c_1a_1})  \]
\[ = b_1 \otimes c_1a_1   \]
\[ d_1(b_2c_2a_2 \otimes 1) = b_2 \otimes c_2a_2   \]
\[ d_1(c_0c_1 \otimes 1) = c_0 \otimes c_1  \]
\[ d_1(c_1c_2 \otimes 1) = c_1 \otimes c_2 \]
\[ d_1(b_1a_0 \otimes 1) = b_1 \otimes a_0   \]
\[ d_1(b_2a_1 \otimes 1) = b_2 \otimes a_1.  \]
\newpage 
\[ d_2: C_2 \otimes B_2 \rightarrow C_1 \otimes B_2  \]
are given by,
\[  d_2(a_2b_2c_2a_2 \otimes 1) = a_2b_2c_2 \otimes a_2 - i_1d_1(a_2b_2c_2 \otimes a_2) \]
\[ = a_2b_2c_2 \otimes a_2 - i_1(a_2 \otimes \overline{b_2c_2a_2})   \]
\[ = a_2b_2c_2 \otimes a_2  \]
\[ d_2(c_1a_1b_1a_0 \otimes 1) = c_1a_1b_1 \otimes a_0 - i_1d_1(c_1a_1b_1 \otimes a_0)  \]
\[ = c_1a_1b_1 \otimes a_0 - i_1( c_1 \otimes \overline{a_1b_1a_0} + a_0 \otimes \overline{b_0c_0a_0})   \]
\[ = c_1a_1b_1 \otimes a_0  \]
\[ d_2(c_1a_1b_1c_1a_1 \otimes 1) = c_1a_1b_1 \otimes c_1a_1 - i_1d_1(c_1a_1b_1 \otimes c_1a_1)  \]
\[ = c_1a_1b_1 \otimes c_1a_1 - i_1(c_1 \otimes \overline{a_1b_1c_1a_1} + a_0 \otimes \overline{b_0c_0c_1a_1})   \]
\[ = c_1a_1b_1 \otimes c_1a_1  \]
\[ d_2(c_2a_2b_2c_2 \otimes 1) = c_2a_2b_2 \otimes c_2 - i_1d_1(c_2a_2b_2 \otimes c_2)  \]
\[ = c_2a_2b_2 \otimes c_2 - i_1( c_2 \otimes \overline{a_2b_2c_2} + a_1 \otimes \overline{b_1c_1c_2})   \]
\[ = c_2a_2b_2 \otimes c_2  \]
\[ d_2(c_2a_2b_2a_1 \otimes 1) = c_2a_2b_2 \otimes a_1 - i_1d_1(c_2a_2b_2 \otimes a_1)   \]
\[ = c_2a_2b_2 \otimes a_1 - i_1( c_2 \otimes \overline{a_2b_2a_1} + a_1 \otimes \overline{b_1c_1a_1})    \]
\[ = c_2a_2b_2 \otimes a_1   \]
\[ d_2(b_1c_1a_1b_1 \otimes 1) = b_1c_1a_1 \otimes b_1 - i_1d_1(b_1c_1a_1 \otimes b_1)   \]
\[ = b_1c_1a_1 \otimes b_1  - i_1(b_1 \otimes \overline{c_1a_1b_1})   \]
\[ = b_1c_1a_1 \otimes b_1 + i_1(b_1 \otimes a_0b_0c_0)   \]
\[ = b_1c_1a_1 \otimes b_1 + b_1a_0 \otimes b_0c_0 + i_1(b_1 \otimes a_0b_0c_0 - d_1(b_1a_0 \otimes b_0c_0))  \]
\[ = b_1c_1a_1 \otimes b_1 + b_1a_0 \otimes b_0c_0   \]
Similarly we can formulate for $d_2(b_2c_2a_2b_2 \otimes 1)$ like the previous one,
\[ d_2(b_2c_2a_2b_2 \otimes 1) = b_2c_2a_2 \otimes b_2 + b_2a_1 \otimes b_1c_1   \]
\[ d_2(c_0c_1a_1b_1 \otimes 1) = c_0c_1 \otimes a_1b_1 - i_1d_1(c_0c_1 \otimes a_1b_1)  \]
\[ = c_0c_1 \otimes a_1b_1 - i_1(c_0 \otimes \overline{c_1a_1b_1})   \]
\[ = c_0c_1 \otimes a_1b_1 + c_0a_0 \otimes b_0c_0 + i_1( c_0 \otimes a_0b_0c_0 - d_1(c_0a_0 \otimes b_0c_0))   \]
\[ = c_0c_1 \otimes a_1b_1 + c_0a_0 \otimes b_0c_0   \]
\newpage 
\[ d_2(c_1c_2a_2b_2 \otimes 1) = c_1c_2 \otimes a_2b_2 - i_1d_1(c_1c_2 \otimes a_2b_2)   \]
\[ = c_1c_2 \otimes a_2b_2 - i_1(c_1 \otimes \overline{c_2a_2b_2})   \]
\[ = c_1c_2 \otimes a_2b_2 + i_1(c_1 \otimes a_1b_1c_1)   \]
\[ = c_1c_2 \otimes a_2b_2 + c_1a_1b_1 \otimes c_1 + i_1(c_1 \otimes a_1b_1c_1 - d_1(c_1a_1b_1 \otimes c_1))    \]
\[ = c_1c_2 \otimes a_2b_2 + c_1a_1b_1 \otimes c_1.   \]
Now 
\[ d_3: C_3 \otimes B_2 \rightarrow C_2 \otimes B_2  \]
are given by,
\[ d_3(a_2b_2c_2a_2b_2c_2 \otimes 1) = a_2b_2c_2a_2 \otimes b_2c_2 - i_2d_2(a_2b_2c_2a_2 \otimes b_2c_2)  \]
\[ = a_2b_2c_2a_2 \otimes b_2c_2 - i_2( a_2b_2c_2 \otimes \overline{a_2b_2c_2})   \]
\[ = a_2b_2c_2a_2 \otimes b_2c_2   \]
\[ d_3(c_1a_1b_1c_1a_1b_1 \otimes 1) = c_1a_1b_1c_1a_1 \otimes b_1 - i_2d_2(c_1a_1b_1c_1a_1 \otimes b_1)   \]
\[ = c_1a_1b_1c_1a_1 \otimes b_1 - i_2(c_1a_1b_1 \otimes \overline{c_1a_1b_1})   \]
\[ = c_1a_1b_1c_1a_1 \otimes b_1 + i_2(c_1a_1b_1 \otimes a_0b_0c_0)   \]
\[ = c_1a_1b_1c_1a_1 \otimes b_1 + c_1a_1b_1a_0 \otimes b_0c_0   \]
\[ d_3(c_2a_2b_2c_2a_2b_2 \otimes 1) = c_2a_2b_2c_2 \otimes a_2b_2 - i_2d_2(c_2a_2b_2c_2 \otimes a_2b_2)   \]
\[ =  c_2a_2b_2c_2 \otimes a_2b_2 - i_2(c_2a_2b_2 \otimes \overline{c_2a_2b_2})  \]
\[ = c_2a_2b_2c_2 \otimes a_2b_2 + i_2(c_2a_2b_2 \otimes a_1b_1c_1)  \]
\[ = c_2a_2b_2c_2 \otimes a_2b_2 + c_2a_2b_2a_1 \otimes b_1c_1 + i_2(c_2a_2b_2 \otimes a_1b_1c_1 - d_1(c_2a_2b_2a_1 \otimes b_1c_1))    \]
\[ = c_2a_2b_2c_2 \otimes a_2b_2 + c_2a_2b_2a_1 \otimes b_1c_1   \]
\[ d_3(b_1c_1a_1b_1a_0 \otimes 1) = b_1c_1a_1b_1 \otimes a_0 - i_2d_2(b_1c_1a_1b_1 \otimes a_0)   \]
\[ = b_1c_1a_1b_1 \otimes a_0 - i_2(b_1c_1a_1 \otimes \overline{b_1a_0} + b_1a_0 \otimes \overline{b_0c_0a_0})   \]
\[ = b_1c_1a_1b_1 \otimes a_0   \]
\[ d_3(b_1c_1a_1b_1c_1a_1 \otimes 1) = b_1c_1a_1b_1 \otimes c_1a_1 - i_2d_2(b_1c_1a_1b_1 \otimes c_1a_1)  \]
\[ = b_1c_1a_1b_1 \otimes c_1a_1 - i_2( b_1c_1a_1 \otimes \overline{b_1c_1a_1} + b_1a_0 \otimes \overline{b_0c_0c_1a_1})   \]
\[ = b_1c_1a_1b_1 \otimes c_1a_1   \]
\[ d_3(b_2c_2a_2b_2c_2a_2 \otimes 1) = b_2c_2a_2b_2 \otimes c_2a_2   \]
\[  d_3(b_2c_2a_2b_2a_1 \otimes 1) = b_2c_2a_2b_2 \otimes a_1 - i_2d_2(b_2c_2a_2b_2 \otimes a_1)  \]
\[ = b_2c_2a_2b_2 \otimes a_1 - i_2( b_2c_2a_2 \otimes \overline{b_2a_1} + b_2a_1 \otimes \overline{b_1c_1a_1})   \]
\[ = b_2c_2a_2b_2 \otimes a_1   \]
\newpage 
\[ d_3(c_0c_1a_1b_1a_0 \otimes 1) = c_0c_1a_1b_1 \otimes a_0 - i_2d_2(c_0c_1a_1b_1 \otimes a_0)   \]
\[ = c_0c_1a_1b_1 \otimes a_0 - i_2(c_0c_1 \otimes \overline{a_1b_1a_0} + c_0a_0 \otimes \overline{b_0c_0a_0})   \]
\[ = c_0c_1a_1b_1 \otimes a_0   \]
\[ d_3(c_0c_1a_1b_1c_1a_1 \otimes 1) = c_0c_1a_1b_1 \otimes c_1a_1 - i_2d_2(c_0c_1a_1b_1 \otimes c_1a_1)  \]
\[ = c_0c_1a_1b_1 \otimes c_1a_1 - i_2(c_0c_1 \otimes \overline{a_1b_1c_1a_1} + c_0a_0 \otimes \overline{b_0c_0c_1a_1})  \]
\[ = c_0c_1a_1b_1 \otimes c_1a_1  \]
\[ d_3(c_1c_2a_2b_2a_1 \otimes 1) = c_1c_2a_2b_2 \otimes a_1 - i_2d_2(c_1c_2a_2b_2 \otimes a_1)  \]
\[ = c_1c_2a_2b_2 \otimes a_1 - i_2(c_1c_2 \otimes \overline{a_2b_2a_1} + c_1a_1b_1 \otimes \overline{c_1a_1})   \]
\[ =  c_1c_2a_2b_2 \otimes a_1 - c_1a_1b_1c_1a_1 \otimes 1 - i_2( c_1a_1b_1 \otimes c_1a_1 - d_2(c_1a_1b_1c_1a_1 \otimes 1)) \]
\[ = c_1c_2a_2b_2 \otimes a_1 - c_1a_1b_1c_1a_1 \otimes 1  \]
\[ d_3(c_1c_2a_2b_2c_2 \otimes 1) = c_1c_2a_2b_2 \otimes c_2 - i_2d_2(c_1c_2a_2b_2 \otimes c_2) \]
\[ = c_1c_2a_2b_2 \otimes c_2 - i_2(c_1c_2 \otimes \overline{a_2b_2c_2} + c_1a_1b_1 \otimes \overline{c_1c_2})  \]
\[ = c_1c_2a_2b_2 \otimes c_2   \]
So after observing the formulas for $d_1$, $d_2$ and $d_3$, we can easily guess the general formula for $d_n$ which is given by:
\begin{proposition}
\[ d_n: C_n \otimes B_2 \rightarrow C_{n-1} \otimes B_2  \]
when $n$ is even,
\begin{equation}
d_n(a_2b_2c_2 \dots a_2b_2c_2a_2 \otimes 1) = a_2b_2c_2 \dots a_2b_2c_2 \otimes a_2
\end{equation}
\begin{equation}
d_n(c_1a_1b_1 \dots c_1a_1b_1a_0 \otimes 1) = c_1a_1b_1 \dots c_1a_1b_1 \otimes a_0
\end{equation}
\begin{equation}
d_n(c_1a_1b_1 \dots c_1a_1b_1c_1a_1 \otimes 1) = c_1a_1b_1 \dots c_1a_1b_1 \otimes c_1a_1
\end{equation}
\begin{equation}
d_n(c_2a_2b_2 \dots c_2a_2b_2c_2 \otimes 1) = c_2a_2b_2 \dots c_2a_2b_2 \otimes c_2
\end{equation}
\begin{equation}
d_n(c_2a_2b_2 \dots c_2a_2b_2a_1 \otimes 1) = c_2a_2b_2 \dots c_2a_2b_2 \otimes a_1
\end{equation}
\begin{equation}
d_n(b_1c_1a_1 \dots b_1c_1a_1b_1 \otimes 1) = b_1c_1a_1 \dots b_1c_1a_1 \otimes b_1 + b_1c_1a_1b_1 \dots c_1a_1b_1a_0 \otimes b_0c_0
\end{equation}
\begin{equation}
d_n(b_2c_2a_2 \dots b_2c_2a_2b_2 \otimes 1) = b_2c_2a_2 \dots b_2c_2a_2 \otimes b_2 + b_2c_2a_2b_2 \dots c_2a_2b_2a_1 \otimes b_1c_1
\end{equation}
\begin{equation}
d_n(c_0c_1a_1b_1 \dots c_1a_1b_1 \otimes 1) = c_0c_1a_1b_1 \dots c_1a_1 \otimes b_1 + c_0c_1a_1b_1 \dots c_1a_1b_1a_0 \otimes b_0c_0
\end{equation}
except,
\begin{equation}
d_2(c_0c_1a_1b_1 \otimes 1) = c_0c_1 \otimes a_1b_1 + c_0a_0 \otimes b_0c_0
\end{equation}
\newpage 
\begin{equation}
d_n(c_1c_2a_2b_2 \dots c_2a_2b_2 \otimes 1) = c_1c_2a_2b_2 \dots c_2 \otimes a_2b_2
\end{equation}
except
\begin{equation}
d_2(c_1c_2a_2b_2 \otimes 1) = c_1c_2 \otimes a_2b_2 + c_1a_1b_1 \otimes c_1.
\end{equation}
When $n$ is odd,
\begin{equation}
d_n(a_2b_2c_2 \dots a_2b_2c_2 \otimes 1) = a_2b_2c_2 \dots a_2b_2c_2a_2 \otimes b_2c_2
\end{equation}
\begin{equation}
d_n(c_1a_1b_1 \dots c_1a_1b_1 \otimes 1) = c_1a_1b_1 \dots c_1a_1 \otimes b_1 + c_1a_1b_1 \dots c_1a_1b_1a_0 \otimes b_0c_0
\end{equation}
\begin{equation}
d_n(c_2a_2b_2 \dots c_2a_2b_2 \otimes 1) = c_2a_2b_2 \dots c_2 \otimes a_2b_2 + c_2a_2b_2 \dots c_2a_2b_2a_1 \otimes b_1c_1
\end{equation}
\begin{equation}
d_n(b_1c_1a_1 \dots b_1c_1a_1b_1a_0 \otimes 1) = b_1c_1a_1 \dots b_1c_1a_1b_1 \otimes a_0
\end{equation}
\begin{equation}
d_n(b_1c_1a_1 \dots b_1c_1a_1 \otimes 1) = b_1c_1a_1 \dots b_1 \otimes c_1a_1
\end{equation}
\begin{equation}
d_n(b_2c_2a_2 \dots b_2c_2a_2 \otimes 1) = b_2c_2a_2 \dots b_2 \otimes c_2a_2
\end{equation}
\begin{equation}
d_n(b_2c_2a_2 \dots b_2c_2a_2b_2a_1 \otimes 1) = b_2c_2a_2 \dots b_2c_2a_2b_2 \otimes a_1
\end{equation}
\begin{equation}
d_n(c_0c_1a_1b_1 \dots c_1a_1b_1a_0 \otimes 1) = c_0c_1a_1b_1 \dots c_1a_1b_1 \otimes a_0 
\end{equation}
\begin{equation}
d_n(c_0c_1a_1b_1 \dots c_1a_1b_1c_1a_1 \otimes 1) = c_0c_1a_1b_1 \dots c_1a_1b_1 \otimes c_1a_1
\end{equation}
\begin{equation}
d_n(c_1c_2a_2b_2 \dots c_2a_2b_2c_2 \otimes 1) = c_1c_2a_2b_2 \dots c_2a_2b_2 \otimes c_2
\end{equation}
\begin{equation}
d_n(c_1c_2a_2b_2 \dots c_2a_2b_2a_1 \otimes 1) = c_1c_2a_2b_2 \dots c_2a_2b_2 \otimes a_1
\end{equation}
except,
\begin{equation}
d_n(c_1c_2a_2b_2a_1 \otimes 1) = c_1c_2a_2b_2 \otimes a_1 - c_1a_1b_1c_1a_1 \otimes 1.
\end{equation}
\end{proposition}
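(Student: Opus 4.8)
The plan is to prove the formula by induction on $n$, in exactly the manner used for $B_1$ above, starting from the two recursions that define Anick's resolution (Theorem 4.2.1): for an $n$-chain $gt$ with tail $t$ and $(n-1)$-chain $g$,
\[ d_n(gt \otimes 1) = g \otimes t - i_{n-1}d_{n-1}(g \otimes t), \]
while for $u \in \ker d_{n-2}$ with leading term $f \otimes s$, writing $\overline{fs} = \overline{g'c}$ with $f$ an $(n-2)$-chain and $g'$ an $(n-1)$-chain,
\[ i_{n-1}(u) = \alpha\, g' \otimes c + i_{n-1}\bigl(u - \alpha\, d_{n-1}(g' \otimes c)\bigr). \]
This is really a double induction: the outer one on $n$ (assuming the closed form for $d_{n-1}$), the inner one on the leading-term order needed to make the $i_{n-1}$-recursion terminate, which is exactly the fact checked in the proof of Theorem 4.2.1. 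The base cases $n=1,2,3$ are the explicit computations of $d_1$, $d_2$, $d_3$ carried out above; in particular these already establish the exceptional values $d_2(c_0c_1a_1b_1\otimes 1)$, $d_2(c_1c_2a_2b_2\otimes 1)$ and $d_3(c_1c_2a_2b_2a_1\otimes 1)$, so all that remains is the inductive step for large $n$, run separately over the finitely many shapes of $n$-chain listed in Lemma 4.4.3.

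For a fixed shape I would write the $n$-chain as $gt$ with $g$ its unique $(n-1)$-chain prefix and $t$ the one- or two-letter tail dictated by Lemma 4.4.3, substitute the inductive formula for $d_{n-1}(g \otimes t)$, and put the resulting second tensor factors in $C_{n-1} \otimes B_2$ into normal form. The point is that the only nontrivial reductions that ever occur here are $\overline{c_1a_1b_1} = -a_0b_0c_0$ and $\overline{c_2a_2b_2} = -a_1b_1c_1$; every other word appearing (such as $b_1c_1a_1$, $a_1b_1c_1$, $b_2c_2a_2$, $a_2b_2c_2$) is either already normal or contains a leading term of the Gr\"{o}bner basis and hence contributes nothing after reduction. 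One then evaluates $i_{n-1}$ by iterating its recursion, and — exactly as in the earlier tally of cancellations for $\mathbb{K}\langle x,y\mid x^2=xy\rangle$ and for $B_1$ — each application of $i_{n-1}$ emits a single term into the formula for $d_n$ and cancels a pair of the surviving terms; since each step strictly lowers the leading term, after boundedly many steps every term has cancelled and the claimed expression for $d_n$ is what is left.

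The hard part, and the only place where real care is needed, is the behaviour at the two ends of the index range, which is precisely where the exceptions come from. In the generic middle of a chain the reductions telescope cleanly, but the chains of type $c_0c_1a_1b_1\cdots$ and $c_1c_2a_2b_2\cdots$ eventually reach the relations whose leading terms are $c_1a_1b_1$ and $c_2a_2b_2$, and the reduced words $-a_0b_0c_0$ and $-a_1b_1c_1$ are themselves already normal in $B_2$; this makes the $i_{n-1}$-recursion terminate one step earlier than the generic pattern would predict, producing the extra summands $c_0a_0 \otimes b_0c_0$, $c_1a_1b_1 \otimes c_1$ and $-c_1a_1b_1c_1a_1 \otimes 1$ in the low-degree exceptional formulas, and also forcing the change of pattern when a $c_1c_2a_2b_2\cdots$-chain runs out of index-$2$ blocks and must be continued with index-$1$ and index-$0$ blocks. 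Verifying that for every $n$ beyond the exceptional degrees these boundary effects wash out and the clean formulas are restored must be done case by case over the shapes in Lemma 4.4.3; by contrast the cancellation in the interior is entirely routine and identical in form to the $B_1$ computation, so I would present only the boundary analysis in detail and merely indicate the interior telescoping.
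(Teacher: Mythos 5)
Your proposal is correct and follows essentially the same route as the paper, which likewise reduces the proposition to induction on $n$ with the explicit $d_1$, $d_2$, $d_3$ computations as base cases and the recursive definitions of $d_n$ and $i_{n-1}$ from Theorem 4.2.1 driving the inductive step; the paper in fact offers only the one-line remark that the formulas ``can easily be verified with the help of induction,'' so your identification of the two nontrivial reductions $\overline{c_1a_1b_1}=-a_0b_0c_0$ and $\overline{c_2a_2b_2}=-a_1b_1c_1$ and your account of where the exceptional low-degree formulas originate supply more detail than the paper itself does.
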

We can easily verify these formulas with the help of induction.\\ 
We just write the $n$-chains and the general formula, $d_n$ for $B_3$ before proceeding to write the formula of Anick's resolution structure for $B_n$.
\subsubsection{Case $n=3$}
\begin{lemma}
The elements of chain $C_n$, when $n$ is odd are,
\[ a_3b_3c_3 \dots a_3b_3c_3,\hspace{2mm} c_1a_1b_1 \dots c_1a_1b_1,\hspace{2mm} c_2a_2b_2 \dots c_2a_2b_2,   \]
\[ c_3a_3b_3 \dots c_3a_3b_3,\hspace{2mm} b_1c_1a_1 \dots b_1c_1a_1,\hspace{2mm} b_1c_1a_1 \dots b_1c_1a_1b_1a_0, \]
\[ b_2c_2a_2 \dots b_2c_2a_2,\hspace{2mm} b_2c_2a_2 \dots b_2c_2a_2b_2a_1,\hspace{2mm} b_3c_3a_3 \dots b_3c_3a_3,   \]
\[ b_3c_3a_3 \dots b_3c_3a_3b_3a_2,\hspace{2mm} c_0c_1a_1b_1 \dots c_1a_1b_1a_0,\hspace{2mm} c_0c_1a_1b_1 \dots c_1a_1b_1c_1a_1,    \]
\[ c_1c_2a_2b_2 \dots c_2a_2b_2c_2a_2,\hspace{2mm} c_1c_2a_2b_2 \dots c_2a_2b_2a_1,   \]
\[ c_2c_3a_3b_3 \dots c_3a_3b_3c_3,\hspace{2mm} c_2c_3a_3b_3 \dots c_3a_3b_3a_2.    \]
\end{lemma}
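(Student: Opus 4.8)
The plan is to prove the statement by induction on $n$, building the $n$-chains of $B_3$ from the $(n-1)$-chains straight from the recursive definition of a chain, just as was done by hand for $B_1$ and $B_2$ in the two previous subsubsections. For the base case $n=1$, Lemma 4.4.1 says the reduced Gr\"{o}bner basis of $B_3$ is its defining relation set, so $C_1 = F$ is the set of leading monomials; with the given DEGLEX order (in each relation $a_ib_ic_i + c_{i+1}a_{i+1}b_{i+1}$ one has $c_{i+1} > a_i$, hence the leading monomial is $c_{i+1}a_{i+1}b_{i+1}$) this gives
\[ F = \{\, a_3b_3c_3,\ c_0a_0 \,\} \cup \{\, c_{i+1}a_{i+1}b_{i+1},\ b_{i+1}c_{i+1}a_{i+1},\ c_ic_{i+1},\ b_{i+1}a_i \,\}_{0 \le i < 3}, \]
which matches the statement at $n=1$. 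The one structural fact to isolate here is that $a_3b_3c_3 \in F$ while $a_1b_1c_1, a_2b_2c_2 \notin F$; this asymmetry is what will make the top level $i=3$ carry one more periodic family than the lower levels.

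The inductive step just runs the recursion: an $n$-chain is a word $gt$ with $g$ an $(n-1)$-chain of tail $r$, $t$ a nonempty normal word, and $rt$ containing exactly one element of $F$ as a subword, occurring at its very end. Assuming the $(n-1)$-chains are the families in the statement, I would for each of them read off the tail $r$ — always one of finitely many short words such as $b_ic_i,\ c_ia_i,\ a_ib_i,\ a_i,\ b_i,\ c_i,\ a_{i-1}$ — and then enumerate every admissible continuation: every $v\in F$ having a nonempty prefix equal to a suffix of $r$, which fixes the appended block $t$ and the word $gt$, retaining only those for which $\deg_F(rt)=1$. The decisive point is that the condition $\deg_F(rt)=1$ kills the ``double overlap'' continuations: a chain ending in $\ldots c_3a_3b_3$ cannot be continued by the obstruction $b_3c_3a_3$, because the block $a_3b_3c_3a_3$ this would produce contains the two obstructions $a_3b_3c_3$ and $b_3c_3a_3$, so its only continuations are by $a_3b_3c_3$ (appending $c_3$) or by the sink $b_3a_2$ (appending $a_2$); whereas at levels $1,2$, where $a_ib_ic_i\notin F$, the block $a_ib_ic_ia_i$ carries only one obstruction, so there $\ldots c_ia_ib_i$ does continue by $b_ic_ia_i$. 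Tabulating the continuations this way, one checks that each listed $(n-1)$-family extends to exactly the listed $n$-families; for $n$ odd this reproduces the sixteen forms in the statement, and the small cases $n=1,3$ are verified directly.

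It helps to organise the outcome. The chains terminate only at the ``sink'' leading terms $c_0a_0,\ b_1a_0,\ b_2a_1,\ b_3a_2$, because no element of $F$ begins with $a_0,\ a_1$ or $a_2$; these give the families ending $\ldots b_ia_{i-1}$ (and the degenerate family that is just $c_0a_0$). The rest cycle periodically inside one level: such a family has word $(\text{block})^{m+1}$ for $n=2m+1$, where ``block'' is one of the cyclic rotations $a_ib_ic_i$, $b_ic_ia_i$, $c_ia_ib_i$ of the relation $a_ib_ic_i$, so the period is $2$ in the step count and the parity of $n$ decides whether the word stops just after a full block or one letter short — this is what splits the families into the even-$n$ and odd-$n$ lists. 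Level $3$ also admits the rotation beginning with $a_3b_3c_3$ (as $a_3b_3c_3\in F$ is itself a $1$-chain) while levels $1,2$ do not, so there are $3$ periodic families at level $3$ and $2$ at each of levels $1,2$; finally each ``bridge'' obstruction $c_{i-1}c_i$ feeds once into level $i$, contributing one further periodic family and one further sink-terminated family. Altogether $3+2+2=7$ periodic, $3$ sink-terminated and $6$ bridge families, i.e.\ $7+3+6=16$, which is the count in the statement.

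The real obstacle is the continuation analysis of the second paragraph: one must run through \emph{all} overlaps among the fourteen leading monomials, testing $\deg_F(rt)=1$ each time, since several innocent-looking overlaps are excluded by that test while a few less obvious ones survive, and the verdict genuinely differs between the top level and the two lower levels — a single missed or spurious continuation would corrupt the family list. Once the continuation rules are pinned down the remainder is bookkeeping; the only point needing care is following the tail of a chain through the period-$2$ rotations against the parity of $n$, which is precisely what produces the different endings $\ldots a_ib_ic_i$, $\ldots c_ia_ib_i$, $\ldots b_ic_ia_i$ of the odd-$n$ periodic families.
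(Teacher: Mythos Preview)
The paper gives no proof for this lemma; as with the analogous Lemmas~4.4.2 and~4.4.3 for $B_1$ and $B_2$, it is simply asserted as a pattern read off from the first few cases. Your inductive approach via the continuation analysis is exactly the right way to establish such a statement, and it is considerably more careful than anything the paper offers.

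That said, your continuation analysis contains a gap --- and in fact the same gap is present in the paper's statement. You assert that each bridge obstruction $c_{i-1}c_i$ ``feeds once into level $i$'', but a bridge can also feed into the \emph{next} bridge: the $1$-chain $c_0c_1$ has tail $r=c_1$, and $c_1c_2\in F$ begins with $c_1$, so taking $t=c_2$ gives $rt=c_1c_2$ with $\deg_F(rt)=1$, and $c_0c_1c_2$ is a genuine $2$-chain by the very definition you are using. Likewise $c_1c_2c_3$ is a $2$-chain and $c_0c_1c_2c_3$ is a $3$-chain, and each of these then continues into the periodic families at the higher level (e.g.\ $c_0c_1c_2a_2b_2\cdots$, $c_0c_1c_2c_3a_3b_3\cdots$). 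None of these appear in the lemma's list, nor in your count of sixteen. So your own method, carried out completely, would not reproduce the stated list but a strictly larger one: the sentence ``one checks that each listed $(n-1)$-family extends to exactly the listed $n$-families'' fails precisely at the bridges. The paper's $B_2$ computation already omits $c_0c_1c_2$ from the listed $2$-chains, so this is an error inherited from the paper rather than one you introduced; but it is a genuine gap in the argument as written, and it means the lemma as stated is incomplete.
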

\newpage 
\begin{proposition}
 When $n$ is odd, $d_n: C_n \otimes B_3 \rightarrow C_{n-1} \otimes B_3$ is given by,
\begin{equation}
d_n(a_3b_3c_3 \dots a_3b_3c_3 \otimes 1) = a_3b_3c_3 \dots a_3 \otimes b_3c_3
\end{equation}
\begin{equation}
d_n(c_1a_1b_1 \dots c_1a_1b_1 \otimes 1) = c_1a_1b_1 \dots c_1a_1 \otimes b_1 + c_1a_1b_1 \dots c_1a_1b_1a_0 \otimes b_0c_0
\end{equation}
\begin{equation}
d_n(c_2a_2b_2 \dots c_2a_2b_2 \otimes 1) = c_2a_2b_2 \dots c_2a_2 \otimes b_2 + c_2a_2b_2 \dots c_2a_2b_2a_1 \otimes b_1c_1
\end{equation}
\begin{equation}
d_n(c_3a_3b_3 \dots c_3a_3b_3 \otimes 1) = c_3a_3b_3 \dots c_3a_3 \otimes b_3 + c_3a_3b_3 \dots c_3a_3b_3a_2 \otimes b_2c_2
\end{equation}
\begin{equation}
d_n(b_1c_1a_1 \dots b_1c_1a_1 \otimes 1) = b_1c_1a_1 \dots b_1 \otimes c_1a_1
\end{equation}
\begin{equation}
d_n(b_2c_2a_2 \dots b_2c_2a_2 \otimes 1) = b_2c_2a_2 \dots b_2 \otimes c_2a_2
\end{equation}
\begin{equation}
d_n(b_3c_3a_3 \dots b_3c_3a_3 \otimes 1) = b_3c_3a_3 \dots b_3 \otimes c_3a_3
\end{equation}
\begin{equation}
d_n(b_1c_1a_1 \dots b_1c_1a_1b_1a_0 \otimes 1) = b_1c_1a_1 \dots b_1c_1a_1b_1 \otimes a_0
\end{equation}
\begin{equation}
d_n(b_2c_2a_2 \dots b_2c_2a_2b_2a_1 \otimes 1) = b_2c_2a_2 \dots b_2c_2a_2b_2 \otimes a_1
\end{equation}
\begin{equation}
d_n(b_3c_3a_3 \dots b_3c_3a_3b_3a_2 \otimes 1) = b_3c_3a_3 \dots b_3c_3a_3b_3 \otimes a_2
\end{equation}
\begin{equation}
d_n(c_0c_1a_1b_1 \dots c_1a_1b_1a_0 \otimes 1) = c_0c_1a_1b_1 \dots a_1b_1 \otimes a_0
\end{equation}
\begin{equation}
d_n(c_0c_1a_1b_1 \dots c_1a_1b_1c_1a_1 \otimes 1) = c_0c_1a_1b_1 \dots c_1a_1b_1 \otimes c_1a_1
\end{equation}
\begin{equation}
d_n(c_1c_2a_2b_2 \dots c_2a_2b_2a_1 \otimes 1) = c_1c_2a_2b_2 \dots c_2a_2b_2 \otimes a_1
\end{equation}
\begin{equation}
d_n(c_1c_2a_2b_2 \dots c_2a_2b_2c_2a_2 \otimes 1) = c_1c_2a_2b_2 \dots c_2a_2b_2 \otimes c_2a_2
\end{equation}
\begin{equation}
d_n(c_2c_3a_3b_3 \dots c_3a_3b_3c_3 \otimes 1) = c_2c_3a_3b_3 \dots c_3a_3b_3 \otimes c_3
\end{equation}
\begin{equation}
d_n(c_2c_3a_3b_3 \dots c_3a_3b_3a_2 \otimes 1) = c_2c_3a_3b_3 \dots c_3a_3b_3 \otimes a_2.
\end{equation}
\end{proposition}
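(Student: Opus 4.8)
The plan is to prove the sixteen identities by induction on $n$, following verbatim the arguments already used for $B_1$ in Proposition~4.4.2 and for $B_2$ in Proposition~4.4.3. By the recursive description of Anick's resolution (Theorem~4.2.1), if $gt$ is an $n$-chain with tail $t$ --- one of the sixteen types listed in Lemma~4.4.4 --- then
\[ d_n(gt \otimes 1) = g \otimes t - i_{n-1}\,d_{n-1}(g \otimes t), \]
so the whole computation splits into two mechanical steps: evaluate $d_{n-1}(g \otimes t)$ by the inductive formulas (now with the shorter chain $g$ as free generator and the tail absorbed into the algebra factor), then resolve the resulting element of $\ker d_{n-2}$ by iterating
\[ i_{n-1}(u) = \alpha\, g' \otimes c + i_{n-1}\bigl(u - \alpha\, d_{n-1}(g' \otimes c)\bigr) \]
until every summand cancels. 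The base of the induction is the explicit list of $d_1$ and $d_2$ (and $d_3$ if convenient) for $B_3$, a finite check over the ten Gr\"{o}bner basis elements from Lemma~4.4.1 and the short lists of $2$- and $3$-chains, carried out exactly as in the $B_1$ and $B_2$ cases above.

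What makes the induction run is that $a_3b_3c_3$, $b_{i+1}c_{i+1}a_{i+1}$, $c_ic_{i+1}$ and $b_{i+1}a_i$ all belong to $F$: whenever a reduction $\overline{fs}$ occurring inside an application of $i_{n-1}$ contains one of them as a subword it equals $0$ by the convention of Remark~4.2.1, and that $i_{n-1}$-term disappears at once. The only surviving reductions are those driven by the three ``ladder'' rewrites $a_ib_ic_i \mapsto -\,c_{i+1}a_{i+1}b_{i+1}$ for $i=0,1,2$ (and $a_0b_0c_0 \mapsto -\,c_1a_1b_1$), and these are exactly what produce the secondary summands $\otimes\, b_0c_0$, $\otimes\, b_1c_1$, $\otimes\, b_2c_2$ attached to the chains of types $c_1a_1b_1\cdots$, $c_2a_2b_2\cdots$, $c_3a_3b_3\cdots$ and to the mixed chains $c_0c_1\cdots$, $c_1c_2\cdots$, $c_2c_3\cdots$. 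Tracking them, one finds that each pass of $i_{n-1}$ emits one free generator into the formula for $d_n$ while two further terms die against a value of $d_{n-1}$; the telescoping obeys the same counting table as in the $B_1$ computation, now with three interlocking copies of that ladder running at once.

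The main obstacle is bookkeeping rather than ideas: the families beginning $c_0c_1$, $c_1c_2$, $c_2c_3$ couple two adjacent ladders, and --- as already happened for $B_2$, where $d_2(c_0c_1a_1b_1\otimes 1)$, $d_2(c_1c_2a_2b_2\otimes 1)$ and $d_3(c_1c_2a_2b_2a_1\otimes 1)$ needed separate treatment --- one must check whether the smallest instances of these families fire a cross-level relation before the uniform pattern stabilises, recording any exceptional formulas that result. I would therefore (i) carry out the generic induction step for each of the sixteen chain types with $n$ large, verifying in every case that the $i_{n-1}$-resolution telescopes as claimed, and then (ii) return to the finitely many low-degree chains (essentially $n \le 3$) and check them by hand. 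The even-$n$ case, omitted from the statement, follows by the identical argument with the chains ending in a single generator and those ending in a two-letter tail interchanging roles, just as for $B_1$ and $B_2$.
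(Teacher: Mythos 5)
Your proposal is correct and follows essentially the same route as the paper, which itself disposes of this proposition with the single remark that the formula ``can easily be proved using the methods of induction'' following the $B_1$ and $B_2$ computations; your outline simply makes that induction (recursive definition of $d_n$ and $i_{n-1}$, vanishing of reductions containing a monomial leading term, telescoping of the surviving $i_{n-1}$-terms) explicit. One small slip: with the chosen DEGLEX order the leading term of $a_ib_ic_i + c_{i+1}a_{i+1}b_{i+1}$ is $c_{i+1}a_{i+1}b_{i+1}$, so the ladder rewrite runs $c_{i+1}a_{i+1}b_{i+1} \mapsto -a_ib_ic_i$ (downward in the index), not in the direction you wrote --- although the secondary summands $\otimes\, b_{i-1}c_{i-1}$ that you then attribute to it are exactly the ones the correct direction produces.
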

\begin{lemma}
When $n$ is even the elements of $C_n$ are,
\[ a_3b_3c_3 \dots a_3b_3c_3a_3,\hspace{2mm} c_1a_1b_1 \dots c_1a_1b_1a_0,\hspace{2mm} c_1a_1b_1 \dots c_1a_1b_1c_1a_1,  \]
\[ c_2a_2b_2 \dots c_2a_2b_2a_1,\hspace{2mm} c_2a_2b_2 \dots c_2a_2b_2c_2a_2,\hspace{2mm} c_3a_3b_3 \dots c_3a_3b_3c_3a_3,  \]
\[ c_3a_3b_3 \dots c_3a_3b_3a_2,\hspace{2mm} b_1c_1a_1 \dots b_1c_1a_1b_1,\hspace{2mm} b_2c_2a_2 \dots b_2c_2a_2b_2,   \]
\[ b_3c_3a_3 \dots b_3c_3a_3b_3,\hspace{2mm} c_0c_1a_1b_1 \dots c_1a_1b_1,\hspace{2mm} c_1c_2a_2b_2 \dots c_2a_2b_2,   \]
\[ c_2c_3a_3b_3 \dots c_3a_3b_3.   \]
\end{lemma}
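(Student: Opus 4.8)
The plan is to derive this lemma by applying one further step of the recursive definition of chains, with Lemma~4.4.4 (the description of $C_n$ for odd $n$) as the input. Recall that an $n$-chain is by definition a word $gt$ in which $g$ is an $(n-1)$-chain, $t$ is a normal word, and $\deg_F(rt)=1$ with the element of $F$ occurring at the right-hand end of $rt$, where $r$ is the tail of $g$; the tail of $gt$ is $t$. Consequently the even chains are exactly the words $gt$ obtained from the odd chains $g$ of Lemma~4.4.4 by appending such an admissible $t$. The only other ingredient is the set $F=C_1$ of leading terms, which by Lemma~4.4.1 consists of the fourteen words $a_3b_3c_3$, $c_0a_0$, the words $c_ia_ib_i$ and $b_ic_ia_i$ for $i\in\{1,2,3\}$, and the words $c_ic_{i+1}$ and $b_{i+1}a_i$ for $i\in\{0,1,2\}$. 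Strictly this lemma and Lemma~4.4.4 together form one induction on $n$ (odd chains built from even, even from odd), so I would present them as a single argument whose odd half is already in place; what follows is the step that produces the even chains from the odd ones.

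The concrete work is this. The tail of each odd chain is read straight off Proposition~4.4.4: in the formula $d_n(gt\otimes1)=g\otimes t-\cdots$ the first summand $g\otimes t$ displays the tail $t$. For each of the sixteen odd families one then takes its tail $r$, considers for every $u\in F$ each factorization $u=p\,t$ in which $p$ is a nonempty suffix of $r$ (this forces $t$ nonempty and normal), discards the candidates with $\deg_F(rt)>1$, and records the surviving words $gt$. Six of the families --- those whose tail is the single letter $a_0$, $a_1$ or $a_2$ --- admit no extension at all, since no element of $F$ begins with any of these letters; these are the chains that terminate. Of the remaining ten, the three headed by $(c_ia_ib_i)^k$ (tail $b_i$) extend in two ways, by $c_ia_i$ and by $a_{i-1}$, while the other seven extend in a single way once the $\deg_F$ check has eliminated the spurious candidates; the count $1+2+2+2+1+1+1+1+1+1=13$ agrees with the statement, and going through the cases shows that the thirteen words produced are precisely the ones listed. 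For instance, the tail $b_1$ of $c_1a_1b_1\dots c_1a_1b_1$ extends by $c_1a_1$ to $c_1a_1b_1\dots c_1a_1b_1c_1a_1$ and by $a_0$ to $c_1a_1b_1\dots c_1a_1b_1a_0$. The base case $n=2$ is the same computation with $C_1=F$ itself in place of an odd list, and one checks that the $13$ two-chains obtained are the single-block specializations of the families in the statement.

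The step that genuinely needs care is the verification that $\deg_F(rt)=1$ for the retained extensions and $\deg_F(rt)\ge2$ for the discarded ones; the rest is bookkeeping. The subtlety is that several cyclic rotations of one relation can lie in $F$ --- all three rotations $a_3b_3c_3$, $b_3c_3a_3$, $c_3a_3b_3$ do, for the top index, whereas only two do for $i=1,2$ --- so a candidate extension that looks fine because it creates one $F$-suffix may create a second $F$-subword just to its left. The model case: appending $t=a_3b_3$ to the tail $b_3c_3$ gives $rt=b_3c_3a_3b_3$, which contains both $b_3c_3a_3$ and $c_3a_3b_3$, so $\deg_F(rt)=2$ and this candidate is rejected --- which is why the $(a_3b_3c_3)^k$ chain picks up only a trailing $a_3$, not a trailing $a_3b_3$. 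Once one has checked, family by family, that each retained extension contributes exactly one $F$-subword and each rejected candidate contributes at least two --- and noted that every tail produced is again a normal word, so the construction is well-posed --- the enumeration is complete; distinctness of the thirteen families and the impossibility of any chain outside the list follow at once from the explicit form of $F$.
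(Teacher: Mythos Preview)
Your proposal is correct and follows exactly the approach implicit in the paper: the paper states this lemma without proof, relying on the pattern established by the explicit low-degree chain computations done for $B_1$ and $B_2$, and your argument---building the even chains recursively from the odd ones of Lemma~4.4.4 via the definition of chains---is precisely the verification the paper leaves to the reader. One small presentational point: reading off the tails from Proposition~4.4.4 is slightly circular (that proposition presupposes the chain structure), but as you note, the tails are determined directly by the chain definition, so this is not a logical gap.
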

\begin{proposition} 
\[ d_n: C_n \otimes B_3 \rightarrow C_{n-1} \otimes B_3  \]
are given by,
\begin{equation}
d_n(a_3b_3c_3 \dots a_3b_3c_3a_3 \otimes 1) = a_3b_3c_3 \dots a_3b_3c_3 \otimes a_3
\end{equation}
\begin{equation}
d_n(c_1a_1b_1 \dots c_1a_1b_1a_0 \otimes 1) = c_1a_1b_1 \dots c_1a_1b_1 \otimes a_0
\end{equation}
\newpage 
\begin{equation}
d_n(c_1a_1b_1 \dots c_1a_1b_1c_1a_1 \otimes 1) = c_1a_1b_1 \dots c_1a_1b_1 \otimes c_1a_1
\end{equation}
\begin{equation}
d_n(c_2a_2b_2 \dots c_2a_2b_2a_1 \otimes 1) = c_2a_2b_2 \dots c_2a_2b_2 \otimes a_1
\end{equation}
\begin{equation}
d_n(c_2a_2b_2 \dots c_2a_2b_2c_2a_2 \otimes 1) = c_2a_2b_2 \dots c_2a_2b_2 \otimes c_2a_2
\end{equation}
\begin{equation}
d_n(c_3a_3b_3 \dots c_3a_3b_3a_2 \otimes 1) = c_3a_3b_3 \dots c_3a_3b_3 \otimes a_2
\end{equation}
\begin{equation}
d_n(c_3a_3b_3 \dots c_3a_3b_3c_3a_3 \otimes 1) = c_3a_3b_3 \dots c_3a_3b_3 \otimes c_3a_3
\end{equation}
\begin{equation}
d_n(b_1c_1a_1 \dots b_1c_1a_1b_1 \otimes 1) = b_1c_1a_1 \dots b_1c_1a_1 \otimes b_1 + b_1c_1a_1b_1 \dots c_1a_1b_1a_0 \otimes b_0c_0
\end{equation}
\begin{equation}
d_n(b_2c_2a_2 \dots b_2c_2a_2b_2 \otimes 1) = b_2c_2a_2 \dots b_2c_2a_2 \otimes b_2 + b_2c_2a_2b_2 \dots c_2a_2b_2a_1 \otimes b_1c_1
\end{equation}
\begin{equation}
d_n(b_3c_3a_3 \dots b_3c_3a_3b_3 \otimes 1) = b_3c_3a_3 \dots b_3c_3a_3 \otimes b_3 + b_3c_3a_3b_3 \dots c_3a_3b_3 \otimes b_2c_2
\end{equation}
\end{proposition}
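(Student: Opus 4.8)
The plan is to induct on $n$. Since the odd-case formula is already in place (Proposition 4.4.4), we may induct on even $n$ alone, invoking the odd expression for $d_{n-1}$ at each stage, and otherwise mimic the computations already done for $B_1$ and $B_2$. First I would record the normal forms forced by the Gr\"{o}bner basis of Lemma 4.4.1 in the chosen DEGLEX order, namely
\[
\overline{c_1a_1b_1}=-a_0b_0c_0,\qquad \overline{c_2a_2b_2}=-a_1b_1c_1,\qquad \overline{c_3a_3b_3}=-a_2b_2c_2,
\]
together with the fact that $a_3b_3c_3$, $c_0a_0$, $b_ic_ia_i$, $c_ic_{i+1}$ and $b_{i+1}a_i$ all reduce to $0$; these are the only head-reductions that occur in the words below. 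The base case $n=2$ would be a direct computation of exactly the kind displayed for $B_2$: list the $2$-chains of $B_3$, apply $d_2(f\otimes 1)=g\otimes t-i_1d_1(g\otimes t)$ with $d_1$ from Proposition 4.4.4, and record the finitely many exceptional formulas, of the sort that appear in Proposition 4.4.3.

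For the inductive step, given an even $n=2m\ge 4$ and an even $n$-chain $f$ from Lemma 4.4.5, I would write $f=gt$ with $g$ its (odd) $(n-1)$-chain prefix and $t$ its tail, and apply the recursion of Theorem 4.2.1,
\[
d_n(f\otimes 1)\;=\;g\otimes t\;-\;i_{n-1}\,d_{n-1}(g\otimes t).
\]
Here $d_{n-1}(g\otimes t)$ is obtained from the odd-case formula (Proposition 4.4.4) by multiplying its right tensor factors by $t$ and normalizing; it lies in $\ker d_{n-2}$ and is a short $\mathbb{K}$-linear combination of terms $\phi\otimes w$ with $\phi$ an $(n-2)$-chain.

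Then I would unwind $i_{n-1}$ on this combination through its own recursion $i_{n-1}(u)=\alpha\,h\otimes c+i_{n-1}(u-\alpha\,d_{n-1}(h\otimes c))$: isolate the leading term $\phi\otimes w$ of the current argument $u$, locate the unique $(n-1)$-chain $h$ with $\overline{hc}=\overline{\phi w}$, record $\alpha\,h\otimes c$, and recurse on $u-\alpha\,d_{n-1}(h\otimes c)$. Exactly as in the table in the proof of Proposition 4.3.1, the terms thrown up by successive $d_{n-1}$'s cancel in pairs, so the argument of $i_{n-1}$ strictly decreases in leading term and becomes $0$ after finitely many steps; the recorded summands together with $g\otimes t$ should then be precisely the claimed formula. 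The extra summands of the form $(\,\cdot\,)\otimes b_{i-1}c_{i-1}$ in the $b_ic_ia_i\cdots$-chain formulas will come out of the reductions $\overline{c_ia_ib_i}=-a_{i-1}b_{i-1}c_{i-1}$, while the top strand $i=3$ should behave like the $a_1b_1c_1$-strand of $B_1$ because $\overline{a_3b_3c_3}=0$.

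The hard part will be the bookkeeping rather than anything conceptual: one must verify that $u-\alpha\,d_{n-1}(h\otimes c)$ genuinely drops in leading term at every stage and that the pairwise cancellations are exact, so that the $i_{n-1}$-recursion terminates cleanly with output of the asserted shape --- this is the slightly lengthy verification that Anick's original argument omits. I would manage the case analysis by grouping the chains of Lemma 4.4.5 into blocks (the $a_ib_ic_i$-type strands, the $c_ia_ib_i$- and $b_ic_ia_i$-type chains, the $c_{i-1}c_i$-type chains), since within each block the relevant reductions and chain-matchings are uniform; granting the bookkeeping, the induction closes and the Proposition follows.
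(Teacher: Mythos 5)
Your proposal is correct and takes essentially the same route as the paper, which for this proposition offers only the one-line remark that the formula "can easily be proved using the methods of induction" on the pattern of Proposition 4.3.1 and the $B_1$, $B_2$ computations. Your outline (base case $n=2$ by direct computation, inductive step via $d_n(f\otimes 1)=g\otimes t-i_{n-1}d_{n-1}(g\otimes t)$ with the head-reductions $\overline{c_{i}a_{i}b_{i}}=-a_{i-1}b_{i-1}c_{i-1}$ and the pairwise-cancellation bookkeeping for the $i_{n-1}$-recursion) is exactly the argument the paper intends, just spelled out in more detail than the paper itself provides.
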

Again we can easily prove the formula of $d_n$ using the methods of induction. 

\subsubsection{The general formula}
We recall algebra $B_n$, which consists of a set of generators\\  $\{a_0,b_0,c_0,a_1,b_1,c_1,\dots,a_n,b_n,c_n \}$ and a set of relations given by
\[ \{a_nb_nc_n, c_0a_0 \} \cup \{a_ib_ic_i + c_{i+1}a_{i+1}b_{i+1},b_{i+1}c_{i+1}a_{i+1},c_ic_{i+1},b_{i+1}a_i\}_{0 \leq i < n}    \]
According to lemma 4.4.1, the reduced Gr\"{o}bner basis of algebra $B_n$ is given by the set of relations defined above. The set of leading terms, denoted by $F$, are:
\[ a_nb_nc_n,\hspace{3mm} c_0a_0,\hspace{3mm} \{c_ia_ib_i\}_{1 \leq i \leq n},\hspace{3mm}  \{b_ic_ia_i\}_{1 \leq i \leq n},\hspace{3mm} \{c_ic_{i+1}\}_{0 \leq i \leq n-1},\hspace{3mm} \{b_ia_{i-1}\}_{1 \leq i \leq n}.   \]
\begin{lemma}
Total number of elements in $C_n$, when $n$ is odd, are $5n+1$. The elements are:
\[ a_nb_nc_n\dots a_nb_nc_n,\hspace{3mm} \{c_ia_ib_i \dots c_ia_ib_i\}_{1 \leq i \leq n},\hspace{3mm} \{b_ic_ia_i \dots b_ic_ia_i\}_{1 \leq i \leq n} \]
\[  \{b_ic_ia_i \dots b_ic_ia_ib_ia_{i-1}\}_{1 \leq i \leq n},\hspace{3mm} \{c_{i-1}c_ia_ib_i \dots c_ia_ib_ia_{i-1}\}_{1 \leq i \leq n},\] \[ \{c_{i-1}c_ia_ib_i \dots c_ia_ib_ic_ia_i\}_{1 \leq i \leq n-1},\hspace{3mm}   c_{n-1}c_na_nb_n \dots c_na_nb_nc_n. \]
\end{lemma}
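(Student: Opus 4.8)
The plan is to prove the statement by induction on the chain degree, following closely the explicit computations already performed for $B_1$, $B_2$ and $B_3$: the displayed list for the general $B_n$ is what one reads off from those three on letting the index $i$ run over all the available values $1,\dots,n$ and extracting the pattern that stabilises. Two conventions should be recalled first. The ``$n$'' in ``$C_n$'' and in ``$n$ odd'' is the chain degree, whereas the ``$n$'' in ``$5n+1$'' and in the list is the parameter of the algebra; and, in the convention fixed earlier for the ellipsis ``$\dots$'', each displayed word stands for a whole family of chains --- one member for each admissible length of its repeated block --- so that ``$5n+1$'' counts families. Moreover the list is the \emph{stable} one, reached once the degree is large enough for a repeated block to occur; the low degrees (the directly computed $C_1$, and $C_2$ if needed) are inspected by hand, and one checks that the relevant degree already lies in the stable range (immediate for $B_1$, and clear for $B_n$ once the overlap structure below is understood).

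The first real step is to tabulate, once and for all, the admissible one-step extensions among the $4n+2$ leading terms recalled above. By the inductive definition of chains: if a chain currently ends in a leading term $w$ with tail $t$, an extension is a normal word $s$ such that $ts$ contains exactly one element of $F$, occurring at its right end, and that element is the next leading term. A finite case analysis produces a directed ``overlap graph'' of the following shape. For each fixed index $i$ the leading terms $c_ia_ib_i$ and $b_ic_ia_i$ overlap cyclically, forcing the repeated blocks $c_ia_ib_ic_ia_ib_i\cdots$ and $b_ic_ia_ib_ic_ia_i\cdots$; at the top index the further leading term $a_nb_nc_n$ (together with $c_na_nb_n$ and $b_nc_na_n$) contributes the extra block $a_nb_nc_na_nb_nc_n\cdots$, but, precisely because $a_nb_nc_n$ is now a leading term, certain extensions that are legal for $i<n$ become illegal for $i=n$ (they would place $a_nb_nc_n$ in the interior of the chain), which accounts for the slightly different shape of the top families. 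Each link $c_{i-1}c_i$ feeds into the index-$i$ cycle from the left, giving the families $c_{i-1}c_ia_ib_i\cdots$; the leading term $c_ia_ib_i$ may also be followed by the dead end $b_ia_{i-1}$, giving the families $\cdots c_ia_ib_ia_{i-1}$ that exit the cycle; and $b_ia_{i-1}$, like $c_0a_0$, admits no further extension. All other combinations of leading terms give no new edge.

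Given this table, the induction step is purely mechanical, exactly as for $n=1,2,3$: assuming the degree-$m$ chains form the displayed $5n+1$-element list when $m$ is odd, and the companion $4n+1$-element list when $m$ is even, one applies the extensions of the previous paragraph to every member and checks that the result is exactly the list in degree $m+1$ --- no word missing, no word extra, no repetitions. Counting the displayed families then gives $1+n+n+n+n+(n-1)+1=5n+1$. The hard part is precisely this matching: one must handle the condition ``$ts$ contains exactly one element of $F$, at its right end'' with care, both to rule out the extensions that would bury a leading term in the interior of a chain (as in the $i=n$ phenomenon above) and to confirm that the repeated middle block of each chain type is pinned down to exactly the cyclic word displayed, with the stated parity of its length. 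Once the overlap table has been established and shown complete, the induction and the count are routine, and the whole argument is the direct generalisation of the already-treated cases $B_1$, $B_2$ and $B_3$.
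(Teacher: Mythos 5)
The paper offers no proof of this lemma at all: it is asserted by extrapolation from the hand-computed low-degree chains of $B_1$, $B_2$, $B_3$. So your framework --- induction on the chain degree driven by a once-and-for-all table of admissible one-step extensions between leading terms --- is the right way to turn that extrapolation into an argument, and your bookkeeping is sound: you correctly separate the two roles of the letter $n$ (chain degree versus algebra parameter), you correctly observe that $5n+1$ counts families rather than individual chains, the tally $1+n+n+n+n+(n-1)+1=5n+1$ matches the computed values $6,11,16$ for $n=1,2,3$, and the $i=n$ anomaly you isolate (extensions of $c_na_nb_n$ by $c_na_n$ are blocked because $a_nb_nc_n$ is itself a leading term) is exactly the right phenomenon.

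However, the one substantive claim you make about the outcome of the ``finite case analysis'' --- that apart from the edges you list, \emph{all other combinations of leading terms give no new edge} --- is false, and it hides a genuine gap. The leading term $c_{i-1}c_i$ has tail $c_i$, and $c_ic_{i+1}$ is also a leading term; taking $t=c_{i+1}$ gives $rt=c_ic_{i+1}$, which is a normal letter appended to the tail, contains exactly one element of $F$, and contains it at the end. By the paper's own definition of an $n$-chain this makes $c_{i-1}c_ic_{i+1}$ a $2$-chain (e.g.\ $c_0c_1c_2$ for $B_2$), and it propagates: $c_0c_1c_2a_2b_2$ would be a $3$-chain, and runs $c_jc_{j+1}\cdots c_k$ followed by the index-$k$ cycle generate whole families absent from the displayed list. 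So your ``purely mechanical'' matching step would already fail at degree $2$ for $n\geq 2$: either you must supply an argument that excludes these words from being chains (nothing in the definitions as stated does so), or the enumeration --- and with it the count $5n+1$ --- has to be corrected. Since the overlap table is the entire content of the proof, you need to actually write it out and verify its completeness rather than assert it; as it stands, the assertion is where the proof breaks.
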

\begin{lemma}
When $n$ is even, the elements of $C_n$ are:
\[ a_nb_nc_n \dots a_nb_nc_na_n,\hspace{3mm} \{c_ia_ib_i \dots c_ia_ib_ic_ia_i\}_{1 \leq i \leq n-1},\hspace{3mm} c_na_nb_n \dots c_na_nb_nc_n,   \]
\[ \{c_ia_ib_i \dots c_ia_ib_ia_{i-1}\}_{1 \leq i \leq n},\hspace{3mm} \{b_ic_ia_i \dots b_ic_ia_ib_i\}_{1 \leq i \leq n},\] \[ \{c_{i-1}c_ia_ib_i \dots c_ia_ib_i\}_{1 \leq i \leq n}.   \]
Total number of elements in $C_n$ are $4n+1$.
\end{lemma}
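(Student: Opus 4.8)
The plan is to prove this lemma together with its odd-degree counterpart, the preceding lemma, by a single induction on the chain degree, imitating the explicit computations already carried out for $B_1$, $B_2$ and $B_3$. The only inputs needed are Lemma~4.4.1, which identifies the leading-term set $F$ of $B_n$ with its defining relations, and the inductive description of chains recalled earlier: an $(m{+}1)$-chain is a word $gt$ with $g$ an $m$-chain of tail $r$, $t$ a normal word, $\deg_F(rt)=1$, and the unique element of $F$ inside $rt$ sitting at the right end of $rt$. Since $g$ and $t$ are uniquely recovered from $gt$ (the tail being part of the data of a chain), the number of $(m{+}1)$-chains equals the sum, over all $m$-chains $g$, of the number of normal words $t$ for which $gt$ is again a chain, so the whole statement will follow once we control these extensions.

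Next I would organise $F$ into the ``cyclic'' words $c_ia_ib_i$ and $b_ic_ia_i$ for $1\le i\le n$, the ``linking'' words $b_ia_{i-1}$ and $c_{i-1}c_i$ for $1\le i\le n$, and the two exceptional words $a_nb_nc_n$ and $c_0a_0$. The combinatorial core is a single bounded computation: because every leading term has length $2$ or $3$, there are only finitely many overlaps between a tail $r$ (a proper suffix of a leading term, hence of length $1$ or $2$) and a leading term $w$, and for each one tests whether appending the part of $w$ not covered by $r$ keeps $\deg_F(rt)=1$. The outcome is a small ``succession graph'': at a generic level $i$ the only moves are the alternation $c_ia_ib_i\leftrightarrow b_ic_ia_i$, with $c_ia_ib_i$ also permitted to pass to $b_ia_{i-1}$; a level is entered only through $c_{i-1}c_i\to c_ia_ib_i$; the letter $a_{i-1}$ produced by $b_ia_{i-1}$ (or by $c_0a_0$) cannot be continued; and at the top level $a_nb_nc_n$ generates one additional strand alternating with $c_na_nb_n$ and $b_nc_na_n$. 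Consequently every chain stays within a single level, is preceded by at most a $c_{i-1}c_i$ prefix, and is terminated by at most one final $b_ia_{i-1}$, which is exactly the shape of the families listed in the two lemmas.

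Given this structure, the inductive step is bookkeeping. For each of the $4n+1$ even-degree families one reads off its tail, namely $a_n$; $c_ia_i$ for $1\le i\le n-1$; $c_n$; $a_{i-1}$ for $1\le i\le n$; $b_i$ for $1\le i\le n$; $a_ib_i$ for $1\le i\le n$, and lists its admissible extensions using the succession graph. The families with tail $b_i$ and those with tail $a_ib_i$ each split into two (continue the level, or step to $b_ia_{i-1}$ and stop, with $c_n$ playing the stopping role instead when $i=n$), the families with tail $a_{i-1}$ are terminal, and every remaining family extends in exactly one way; one then checks that the resulting $(m{+}1)$-chains are precisely the $5n+1$ odd-degree families, the count coming out as $1+(n-1)+1+0\cdot n+2n+2n=5n+1$. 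The symmetric computation in the odd-to-even direction returns the $4n+1$ families listed in the present lemma, whose number is $1+(n-1)+1+n+n+n=4n+1$, and the base case $m=2$ is the same analysis starting from $C_1=F$, exactly as performed by hand for $n=1,2,3$.

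The one genuinely delicate point, where the argument must be run with care, is the succession analysis of the second paragraph: one has to be certain the list of admissible moves is complete, and that every chain really does localise to a single level up to its short $c_{i-1}c_i$ prefix, so that no family of chains is overlooked. The boundary levels behave differently from the generic one and deserve separate treatment: $i=1$ because $a_{i-1}=a_0$ and the extra leading terms $c_0c_1$, $c_0a_0$ occur there, and $i=n$ because $a_nb_nc_n\in F$ interferes with the alternation between $c_na_nb_n$ and $b_nc_na_n$ and produces the two exceptional families $a_nb_nc_n\cdots a_n$ and $c_na_nb_n\cdots c_n$. Once these finitely many special configurations are settled, the induction closes and the count $4n+1$ follows.
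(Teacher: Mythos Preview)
Your approach via a ``succession graph'' on tails is exactly the right idea and is in the spirit of the paper, which does not actually prove this lemma: it simply extrapolates the pattern from the explicit lists computed for $B_1$, $B_2$, $B_3$ and states the general form without argument. Your plan to run a joint induction on the chain degree, extending each family by analysing which leading terms can be appended to its tail, is the natural way to turn that extrapolation into a proof.

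However, the succession analysis you sketch is incomplete at precisely the point you yourself flag as delicate. You assert that ``a level is entered only through $c_{i-1}c_i\to c_ia_ib_i$'', but from a tail $c_i$ with $1\le i\le n-1$ the move $c_i\to c_ic_{i+1}$ is equally admissible: $c_ic_{i+1}\in F$, it is normal as a one-letter extension, and $\deg_F(c_ic_{i+1})=1$ with the leading term sitting at the end. Hence the $1$-chain $c_{i-1}c_i$ extends not only to $c_{i-1}c_ia_ib_i$ but also to the $2$-chain $c_{i-1}c_ic_{i+1}$, and iterating gives chains such as $c_0c_1c_2\cdots c_k$ and their further extensions into level~$k$. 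These cross several levels and do not fit the shape ``single level, preceded by at most one $c_{i-1}c_i$ prefix''. For $B_2$, for instance, $c_0c_1c_2$ is a $2$-chain under the paper's own definition of chains, yet it is absent from the list of $C_2$ given there; one can also see it must be present because $c_0c_1\otimes c_2\in\ker d_1$ in the monomial resolution and nothing else in the paper's $C_2$ can hit it.

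So the gap is real: the lists in the lemma (and in the paper's $B_2$, $B_3$ computations) omit the families generated by iterated $c_{i-1}c_i\to c_ic_{i+1}$ moves, and the count $4n+1$ is too small for $n\ge 2$. Your inductive machinery is sound, but to make the argument go through you must enlarge the succession graph to include these transitions and correspondingly enlarge the family list before the bookkeeping step.
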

\begin{proposition}
When $n$ is odd,
\[ d_n : C_n \otimes B_n \rightarrow C_{n-1} \otimes B_n  \]
are given by
\[ d_n(a_nb_nc_n \dots a_nb_nc_n \otimes 1) = a_nb_nc_n \dots a_nb_nc_na_n \otimes b_nc_n.   \]
For $1 \leq i \leq n$,
\[ d_n(c_ia_ib_i \dots c_ia_ib_i \otimes 1) = c_ia_ib_i \dots c_ia_i \otimes b_i + c_ia_ib_i \dots c_ia_ib_ia_{i-1} \otimes b_{i-1}c_{i-1}   \]
\[ d_n(b_ic_ia_i \dots b_ic_ia_i \otimes 1) = b_ic_ia_i \dots b_i \otimes c_ia_i  \]
\[ d_n(b_ic_ia_i \dots b_ic_ia_ib_ia_{i-1} \otimes 1) = b_ic_ia_i \dots b_ic_ia_ib_i \otimes a_{i-1} \]
\[ d_n(c_{i-1}c_ia_ib_i \dots c_ia_ib_ia_{i-1} \otimes 1) = c_{i-1}c_ia_ib_i \dots c_ia_ib_i \otimes a_{i-1},  \]
except for $2 \leq i \leq n$,
\[ d_3(c_{i-1}c_ia_ib_ia_{i-1} \otimes 1) = c_{i-1}c_ia_ib_i \times a_{i-1} - c_{i-1}a_{i-1}b_{i-1}c_{i-1}a_{i-1} \otimes 1.  \]
For $1 \leq i \leq n-1$,
\[ d_n(c_{i-1}c_ia_ib_i \dots c_ia_ib_ic_ia_i \otimes 1) = c_{i-1}c_ia_ib_i \dots c_ia_ib_i \otimes c_ia_i  \]
\[ d_n(c_{n-1}c_na_nb_n \dots c_na_nb_nc_n \otimes 1) = c_{n-1}c_na_nb_n \dots c_na_nb_n \otimes c_n.  \]
\end{proposition}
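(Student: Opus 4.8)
The plan is to prove the formulas by induction on the homological (chain) degree $n$ — not to be confused with the index $n$ of the algebra $B_n$, which stays fixed — mirroring verbatim the explicit computations for $B_1$, $B_2$ and $B_3$ in Propositions 4.4.2--4.4.5; those, together with the $n=3$ exceptional formula displayed in the statement, serve as the base of the induction. The two ingredients are the recursive rules of Theorem 4.2.1: $d_n(gt\otimes 1)=g\otimes t-i_{n-1}d_{n-1}(g\otimes t)$ for an $n$-chain $gt$ with tail $t$, and $i_{n-1}(u)=\alpha\,g\otimes c+i_{n-1}\big(u-\alpha\,d_{n-1}(g\otimes c)\big)$, where $f\otimes s$ is the leading term of $u$, $\overline{fs}=\overline{gc}$, $f$ is an $(n-2)$-chain and $g$ an $(n-1)$-chain. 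The observation that makes this tractable is that, among the leading terms $F$ of $B_n$, the only reduction which is not $0$ is $\overline{c_ia_ib_i}=-a_{i-1}b_{i-1}c_{i-1}$, coming from the relation $a_{i-1}b_{i-1}c_{i-1}+c_ia_ib_i$ with $1\le i\le n$, whereas $\overline{a_nb_nc_n}=0$ and each of $c_0a_0$, $b_ic_ia_i$, $c_ic_{i+1}$, $b_ia_{i-1}$ reduces to $0$; hence every ``$\otimes b_{i-1}c_{i-1}$''-type summand in the claimed formulas is forced by this single reduction and nothing else.

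Concretely, I would fix the list of $n$-chains from Lemma 4.4.6 and treat its seven families one at a time. For a chain $gt$ in a given family I write $d_n(gt\otimes 1)=g\otimes t-i_{n-1}d_{n-1}(g\otimes t)$, substitute the inductive formula for $d_{n-1}$ applied to $g\otimes t$ (recalling that the right-hand tensor factor is automatically reduced in $B_n$, which is exactly where $\overline{c_ia_ib_i}=-a_{i-1}b_{i-1}c_{i-1}$ enters), and then evaluate $i_{n-1}$ on the result. The evaluation of $i_{n-1}$ is the telescoping procedure illustrated in Example 4.3.1: the current leading term $f\otimes s$ uniquely determines which $(n-1)$-chain $g$ and scalar $\alpha$ to use, and replacing $u$ by $u-\alpha\,d_{n-1}(g\otimes c)$ — expanded via the induction hypothesis — kills that leading term, peels off one summand $\alpha\,g\otimes c$ which will appear in the final formula for $d_n$, and leaves an element whose leading term is strictly smaller in the order $f\otimes t<g\otimes s\iff ft<gs$. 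Since that order is a well order the recursion terminates, and summing the peeled-off summands yields the asserted value of $d_n(gt\otimes 1)$; the exact count of how many summands cancel at each step is the bookkeeping already recorded in the table inside Example 4.3.1, transported to the present relations.

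The hard part will be the ``mixed'' families $c_{i-1}c_ia_ib_i\dots$, because there the index-lowering reduction $\overline{c_ia_ib_i}=-a_{i-1}b_{i-1}c_{i-1}$ can be followed by a second such reduction, splicing the index-$i$ block onto an index-$(i-1)$ block. This splicing is precisely what produces the extra summand $-c_{i-1}a_{i-1}b_{i-1}c_{i-1}a_{i-1}\otimes 1$ in the exceptional formula $d_3(c_{i-1}c_ia_ib_ia_{i-1}\otimes 1)$, which occurs only for $2\le i\le n$, i.e. when $i-1\ge 1$ so that a genuine second block exists; for $i=1$ the lowering stops at $c_0a_0$ and no extra term survives (indeed $b_0c_0a_0$ already reduces to $0$ there). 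One must check that in chain degree $\ge 5$ this feedback has already been reabsorbed one level down — that is, that $d_{n-1}$ on the corresponding even-degree mixed family is the ``clean'' single-term map seen in Proposition 4.4.3 — so that the cascade defining $d_n$ leaves no leftover. I would establish this by a case analysis on the position of the leftmost $c$-block in the word and on whether its index is $0$, tracking the tails generated by $d_{n-1}$ and invoking the defining condition of $n$-chains (Definition 4.1.1: $rt$ must contain exactly one element of $F$, at its end) to rule out any further splicing. Finally, the even-degree formulas — for the $n$-chains classified in Lemma 4.4.7 — come out of the same induction run in parallel, the two parities feeding one another through the single recursion $d_n(gt\otimes 1)=g\otimes t-i_{n-1}d_{n-1}(g\otimes t)$.
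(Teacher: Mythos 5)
Your proposal is correct and follows essentially the same route as the paper, which proves Propositions 4.4.6 and 4.4.7 by induction bootstrapped from the explicit computations for $B_1$, $B_2$, $B_3$ (Propositions 4.4.2--4.4.5) via the recursive formulas for $d_n$ and $i_{n-1}$ in Theorem 4.2.1. In fact your write-up supplies more of the actual mechanism than the paper's one-sentence proof does — in particular the observation that $\overline{c_ia_ib_i}=-a_{i-1}b_{i-1}c_{i-1}$ is the only nonzero reduction among the leading terms, which is exactly what governs which families acquire a second summand and where the exceptional $-c_{i-1}a_{i-1}b_{i-1}c_{i-1}a_{i-1}\otimes 1$ term comes from.
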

\begin{proposition}
When $n$ is even,
\[ d_n : C_n \otimes B_n \rightarrow C_{n-1} \otimes B_n  \]
are given by
\[ d_n(a_nb_nc_n \dots a_nb_nc_na_n \otimes 1) = a_nb_nc_n \dots a_nb_nc_n \otimes a_n  \]
For $1 \leq i \leq n-1$,
\[ d_n(c_ia_ib_i \dots c_ia_ib_ic_ia_i \otimes 1) = c_ia_ib_i \dots c_ia_ib_i \otimes c_ia_i.  \]
\[ d_n(c_na_nb_n \dots c_na_nb_nc_n \otimes 1) = c_na_nb_n \dots c_na_nb_n \otimes c_n  \]
For $1 \leq i \leq n$,
\[ d_n(b_ic_ia_i \dots b_ic_ia_ib_i \otimes 1) = b_ic_ia_i \dots b_ic_ia_i \otimes b_i + b_ic_ia_ib_i \dots c_ia_ib_ia_{i-1} \otimes b_{i-1}c_{i-1}.   \]
For $1 \leq i \leq n-2$,
\[ d_n(c_{i-1}c_ia_ib_i \dots c_ia_ib_i \otimes 1) = c_{i-1}c_ia_ib_i \dots c_ia_i \otimes b_i\] 
\[+ c_{i-1}c_ia_ib_i \dots c_ia_ib_ia_{i-1} \otimes b_{i-1}c_{i-1}. \]
\[ d_n(c_{n-1}c_na_nb_n \dots c_na_nb_n \otimes 1) = c_{n-1}c_na_nb_n \dots c_n \otimes a_nb_n\] 
\[ + c_{n-1}c_na_nb_n \dots c_na_nb_na_{n-1} \otimes b_{n-1}c_{n-1}. \]
\end{proposition}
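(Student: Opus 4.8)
The plan is to prove the even-$n$ formula together with its odd-$n$ companion (the preceding proposition) by a single induction on the homological degree $n$, mirroring exactly the computation already carried out for $B_1$ in Proposition 4.4.2 and verified for $B_2$, $B_3$. The input is Anick's resolution (Theorem 4.2.1): the recursions $d_{n+1}(gt\otimes 1)=g\otimes t-i_nd_n(g\otimes t)$ for an $(n{+}1)$-chain $gt$ with tail $t$, and $i_n(u)=\alpha g\otimes c+i_n\!\big(u-\alpha d_n(g\otimes c)\big)$, where the $(n{-}1)$-chain $f$ in the leading term $f\otimes s$ of $u$ and the $n$-chain $g$ satisfy $\overline{fs}=\overline{gc}$. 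First I would record the base cases $d_0,d_1,d_2$ directly from $d_0(x\otimes 1)=1\otimes x$, $i_0(1\otimes x_{i_1}\cdots x_{i_m})=x_{i_1}\otimes x_{i_2}\cdots x_{i_m}$; the only subtlety there is the word reduction, where $\overline{c_ia_ib_i\cdots}$ picks up a term $a_{i-1}b_{i-1}c_{i-1}\cdots$ because of the Gr\"obner basis relation $c_ia_ib_i=-(a_{i-1}b_{i-1}c_{i-1})$ (for $1\le i\le n$), while $\overline{a_nb_nc_n}=0$, $\overline{b_ic_ia_i}=0$, $\overline{c_0a_0}=0$, $\overline{b_ia_{i-1}}=0$. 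These reductions, together with the explicit chain descriptions in Lemmas 4.4.8 and 4.4.9, are the only facts about $B_n$ that enter.

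For the inductive step, assume the formula for $d_{n-1}$. Using Lemmas 4.4.8--4.4.9 I would, family by family among the $5n+1$ (odd) or $4n+1$ (even) types of $n$-chains, read off the decomposition $w=gt$ with $g$ an $(n{-}1)$-chain and $t$ its tail (for instance $b_ic_ia_i\cdots b_ic_ia_ib_i$ splits as $g=b_ic_ia_i\cdots b_ic_ia_i$, $t=b_i$), apply $d_n(w\otimes 1)=g\otimes t-i_{n-1}d_{n-1}(g\otimes t)$, expand $d_{n-1}(g\otimes t)$ by the inductive formula, reduce each word $xy$ that appears to normal form, and then apply $i_{n-1}$ term by term. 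The pattern to establish is exactly the one tabulated in Proposition 4.4.2: at each application of $i_{n-1}$, subtracting $\alpha\, d_{n-1}(g'\otimes c)$ cancels precisely two monomial terms of the current element against terms of $d_{n-1}(g'\otimes c)$ (the leading term, plus one more generated by the relation $a_ib_ic_i+c_{i+1}a_{i+1}b_{i+1}$), strictly lowering the leading term in the order $f\otimes t<g\otimes s\iff ft<gs$, and contributing exactly one surviving free generator $\alpha\,g'\otimes c$. Iterating, a short table (``two cancellations, one survivor'' per step) shows the process terminates after boundedly many steps with every term exhausted, and the accumulated survivors assemble into exactly the right-hand sides claimed.

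The remaining work is the boundary behaviour. Where a relation is ``missing'' --- i.e.\ $c_0a_0$ and $a_nb_nc_n$ have no companion summand, and the short chain $c_{i-1}c_ia_ib_ia_{i-1}$ in homological degree $3$ does not fit the generic pattern --- the cancellation degrades, producing the \emph{except} clauses ($d_2(c_0c_1a_1b_1\otimes 1)$, $d_2(c_1c_2a_2b_2\otimes 1)$, $d_3(c_{i-1}c_ia_ib_ia_{i-1}\otimes 1)$, and their analogues). I would dispose of these by direct inspection, as was done for $B_2$ and $B_3$, since there are only finitely many such low-degree exceptions.

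The main obstacle is the bookkeeping inside the $i_{n-1}$ recursion: confirming at each intermediate stage that the element genuinely has a strictly smaller leading term (the step flagged as ``slightly lengthy'' in the proof of Theorem 4.2.1), correctly identifying the $(n{-}1)$-chain $g'$ and tail $c$ with $\overline{g'c}$ matching the normalized leading monomial, and checking that the ``two cancellations, one survivor'' mechanism is uniform across all $5n+1$ / $4n+1$ chain families and degrades in a controlled way at the boundary. Once this single-step cancellation lemma is isolated in the form already used for $B_1$, the rest of the proof is a finite, essentially mechanical, combinatorial verification.
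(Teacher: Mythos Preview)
Your proposal is correct and follows essentially the same approach as the paper: an induction on the homological degree $n$ using the Anick recursions from Theorem~4.2.1, guided by the explicit computations already carried out for $B_1$, $B_2$, $B_3$ (Propositions~4.4.2--4.4.5). In fact your write-up is considerably more detailed than the paper's own proof, which simply states ``We prove the above propositions using the method of induction'' and refers back to those earlier cases.
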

\begin{proof}
We prove the above propositions using the method of induction. Proposition 4.4.6 and 4.4.7 are influenced by proposition 4.4.2, 4.4.3, 4.4.4 and 4.4.5 where we construct the formula of $d_n$ for algebra $B_1$, $B_2$, $B_3$ and prove them using the method of induction.
\end{proof}

\newpage 
\section{Categories, Functors and Natural Transformations}
\subsection{Metacategory}
\subsubsection{Axioms for Categories}
  Without using any set theoretic approach and only using axioms we are going to describe categories viz \textit{metacategories}. For that we first require to define \textit{metagraph}.\\
  \begin{definition} A \textbf{metagraph} consists of objects say $a,b,c,\dots$ and arrows $f,g,h,\dots$ along with following two operations: \\
  \textit{Domain}, which assigns to each arrow $f$ an object $a$,\hspace{1mm}say $a=\dom f$\\
  \textit{Codomain}, which assigns to each arrow $f$ an object $b$,\hspace{1mm}say $b=\codom f$\\
  \end{definition}
  We can display these operations by means of an arrow diagram as follows\\
\[  \begin{tikzcd}
     a \arrow{r}{f} & b
  \end{tikzcd}\]
  Now we define a \textbf{metacategory}.\\
 \begin{definition} A \textbf{metacategory} is a metagraph with additional two operations viz.\\
  \textit{Identity}, which assigns to each object $a$ an arrow $Id_a = 1_a : a \rightarrow a$\\
  \textit{Composition}, which assigns to each pair $\langle g,f \rangle$ of arrows,\hspace{1mm}an arrow $g \circ f:\dom f \rightarrow \codom g$, provided $\dom g = \codom f$\end{definition} 
  We can represent this operation by means of a diagram,\\  
  \[
  \begin{tikzcd}
   & b \arrow{dr}{g}     \\
   a \arrow{ur}{f} \arrow{rr}{g \circ f}  && c
   \end{tikzcd}
   \]
   These operations in a \textit{metacategory} are subject to the following axioms:\\
   \textit{Associativity}, for given objects and arrows in the following configuration\\
   \[  \begin{tikzcd}
    a \arrow{r}{f} & b \arrow{r}{g} & c \arrow{r}{h} & d
    \end{tikzcd}
   \]
  one always has the equality\\
  \begin{equation}
            h \circ( g \circ f) = (h \circ g) \circ f 
  \end{equation}
  This axiom assures that the \textit{associative law} holds for the operation of composition whenever it makes sense.\\
%  \[ \begin{tikzcd}
 %  a \arrow{r}{h \circ(g \circ f)=(h \circ g) \circ f} \arrow{rd}{g \circ f} \arrow{d}{f} & d \arrow{u}{h} \\
  % b \arrow{ur}{h \circ g} \arrow{r}{g}  &c 
   %\end{tikzcd} 
%   \]
\textit{Unit law}, for all arrows $f : a \rightarrow b$ and $g:b \rightarrow c$ composition with the identity arrow gives\\
\begin{equation} 1_b \circ f=f  \hspace{10mm} \text{and} \hspace{10mm}  g \circ 1_b=g       \end{equation}
  This axiom assures that the identity arrow $1_b$ of each object $b$ acts as an identity for the operation of composition whenever it makes sense.
  
  %\[ \begin{tikzcd}
  %a \arrow{r}{f} \arrow {f}  & b \arrow{d}{1_b} \searrow{g}\\
   %                               b \arrow{r}{g}   &c                  
  %\end{tikzcd}
  % \]

\vspace{10mm} 

\begin{example} Let us give an example of \textit{metacategory}. We take \textit{metacategory of sets} where \textit{objects} are all sets and \textit{arrows} are the function defined on them (set functions) along with identity function and composition of functions (Usual composition). For example a function $f: X \rightarrow Y$ consists of $X$ as domain, $Y$ as codomain and with a suitable set of ordered pairs $\langle x,fx \rangle$ which assigns for each $x \in X$, an element $fx \in Y$.\\ 
 Similarly we can define \textit{metacategory of all groups} where objects are all groups $G,H,K,\dots$ and arrows are those functions $f:G \rightarrow H$ which are homomorphism of groups.\end{example}
 % Definition of matecategories through arrows only and stating three axioms based on arrows to get a metacategory.
 \subsection{Category}
 Now a category will mean any interpretation of category axioms (Associativity and Unital law) within Set theory thus making it distinguished from \textit{metacategories}.\\
 \begin{definition} A \textbf{category} $\mathcal{C}$ consists of three ingredients: a class obj($\mathcal{C}$) of \textit{objects}, a set of \textit{morphisms} $Hom(A,B)$ for every ordered pair $(A,B)$ of objects and \textit{composition} $Hom(A,B) \times Hom(B,C) \rightarrow Hom(A,C)$ denoted by $(f,g) \mapsto g \circ f=gf$, for every ordered triple $A,B,C$ of objects.
 \end{definition}
 These ingredients are subject to following axioms:\\
 \begin{itemize}
\item the Hom sets are pairwise disjoint i.e. each $f \in Hom(A,B)$ has a unique \textit{Domain} $A$ and a unique \textit{Codomain} $B$.
\item for each object $A$,there exists an \textit{identity morphism} $1_A \in Hom(A,A)$ such that $f \circ 1_A=f1_A=f$ and $1_B \circ f=1_Bf=f$; for each $f : A \rightarrow B$
\item composition is associative i.e. for given morphisms\\ 
\begin{tikzcd} A \arrow{r}{f} & B \arrow{r}{g} & C \arrow{r}{h} & D \end{tikzcd}, we have $h(gf)=(hg)f$.
\end{itemize}
\vspace{5mm}

\begin{example}
 The category $\mathcal{C(X)}$ of partially ordered set $\mathcal{X}$, where $\obj \mathcal{C(X)}$ =\hspace{1mm} $\mathcal{X}$,\hspace{1mm}
 $Hom_\mathcal{C(X)}(i,j)$ consists of a unique element say $l_j^i$ if $i \leq j$ and is $\emptyset$, otherwise.\hspace{1mm}
 Composition is given by $l_k^jl_j^i=l_k^i$ when $ i \leq j \leq k$, because of reflexivity we have $1_i=l_i^i$ and composition makes sense because $\leq$ is transitive.
 \end{example}
 \begin{example}
 Monoid is a category of one object. Thus each monoids is determined by the set of all it's arrows, by the identity arrow and by the rule of composition of arrows. Since any two arrows have a composite so monoids may be described as a set $X$ with a binary operation $X \times X \rightarrow X$ which is associative and has an identity.
 \end{example}
 \begin{example}
  A group is a category of one object and morphisms are homomorphisms and every morphism has a (two-sided) inverse under composition.
  \end{example}
  \begin{example}
  \textbf{Grp}, category of all groups with morphism as group homomorphisms.
  \end{example}
  \begin{example}
  \textbf{Ab}, category of all abelian groups with morphism as group homomorphisms. 
  \end{example}
  \begin{example}
  \textbf{Rng}, category of rings with object as all rings with morphism as homomorphism (preserving units) between two rings.
  \end{example}
  \begin{example}
 \textbf{Top}, category of topological spaces where \textit{$\obj$(\textbf{Top})} is all  topological spaces and morphism are continuous maps between them.
 \end{example}
 \begin{example}
  \textbf{Toph}, where \textit{$\obj$(\textbf{Toph})} is all topological spaces and morphism given by \\
  $Hom_{Toph}(X,Y)$ \hspace{2mm}=\hspace{2mm}\{the set of homotopy classes of continuous mapping from $X$ to $Y$\} .
  \end{example}
  \begin{example}
 \textbf{Top$_\ast$}, in this category\hspace{1mm} \textit{$\obj$(\textbf{$Top_\ast$})} consists of all ordered pairs $(X,x_0)$ where $X$ is a non-empty topological spaces and $x_0 \in X$ and morphisms are continuous maps $f:(X,x_0) \rightarrow (Y,y_0)$ where $f:X \rightarrow Y$ with $f(x_0)=y_0$ i.e. base point preserving maps.
\end{example}

\subsection{Functor}
\begin{definition}
If $\mathcal{C}$ and $\mathcal{D}$ are two categories,then a \textbf{functor} $T:\mathcal{C} \rightarrow \mathcal{D}$ is a function such that:
\begin{enumerate}
\item if $a \in \obj(\mathcal{C})$, then $T(a) \in \obj(\mathcal{D})$.\\
\item if $f: a \rightarrow a'$ in $\mathcal{C}$, then $T(f): T(a) \rightarrow T(a')$ in $\mathcal{D}$.\\
\item if \begin{tikzcd} a \arrow{r}{f} & a' \arrow{r}{g} & a'' \end{tikzcd} in $\mathcal{C}$, then \begin{tikzcd} T(a) \arrow{r}{T(f)} & T(a') \arrow{r}{T(g)} & T(a'') \end{tikzcd} in $\mathcal{D}$ and
\[ T(gf)= T(g)T(f).  \]
\item $T(1_a)=1_{T(a)}$ \hspace{2mm} for each $a \in \obj(\mathcal{C})$.\\
We call this functor a \textbf{covariant functor}.
\end{enumerate}
\end{definition}
\begin{example}
 Power set functor $\mathcal{P}:\textbf{Set} \rightarrow \textbf{Set}$. The object function assigns to each set $X$ the usual power set $\mathcal{P}X$, where elements are all subsets $S \subset X$; it's arrow function assigns to each $f: X \rightarrow Y$ the map $\mathcal{P}f: \mathcal{P}X \rightarrow \mathcal{P}Y$ which sends each $S \subset X$ to its image $fS \subset Y$. Since both $\mathcal{P}(1_X)=1_{\mathcal{P}X}$ and $\mathcal{P}(gf)=\mathcal{P}(g)\mathcal{P}(f)$, this defines a functor $\mathcal{P}:\textbf{Set} \rightarrow \textbf{Set}$.
 \end{example}
 \begin{example}
 If $\mathcal{C}$ is a category, then the \textbf{identity functor} $1_\mathcal{C}: \mathcal{C} \rightarrow \mathcal{C}$ is defined by $1_\mathcal{C}A=A$ for all objects $A$ and $1_\mathcal{C}f=f$ for all morphisms $f$. It is the simplest example of a functor.
 \end{example}
 \begin{example}
 If $\mathcal{C}$ is a category and $A \in \obj(\mathcal{C})$, then the \textbf{$\Hom$ functor}\hspace{1mm} $T_A:\mathcal{C}\rightarrow \textbf{Sets}$, denoted by $\Hom(A,-)$ is defined by
\[ T_A(B)=\Hom(A,B) \hspace{2mm}\text{for all}\hspace{2mm} B \in \obj(\mathcal{C}) \]
and if $f:B \rightarrow B'$ in $\mathcal{C}$, then  $T_A(f): \Hom(A,B) \rightarrow \Hom(A,B')$ is given by 
\[  T_A(f): h \mapsto fh\]
We call $T_A(f)=\Hom(A,f)$ the induced map and denoted it by $f_\ast$.
\end{example}
\begin{example}
Singular homology in a given dimension $n \in \mathbb{N}$ assigns to each topological space $X$ an abelian group $H_n(X)$, the $n$-th homology group of $X$ and also to each continuous map $f:X \rightarrow Y$ of spaces a corresponding homomorphism $H_n(f): H_n(X) \rightarrow H_n(Y)$ of groups thus making $H_n$ a functor \textbf{Top} $\rightarrow$ \textbf{Ab}.
\end{example}
\begin{example}
 Continuing with the previous example homotopic maps $f,g:X \rightarrow Y$ yield the same homomorphism $H_n(X) \rightarrow H_n(Y)$, so we can also regard $H_n$ as a functor \textbf{Toph} $\rightarrow$ \textbf{Grp}, defined on homotopy category.
 \end{example}
 \begin{example}
 For any commutative ring $K$, we can attach the general linear group of $n \times n$ non-singular matrices with entries from $K$ viz $\GL_n(K)$. For any ring homomorphism $f:K \rightarrow K'$ we can associate a homomorphism $\GL_n(f):\GL_n(K) \rightarrow \GL_n(K')$ of groups and thus defines the functor $\GL_n$:\textbf{CRng} $\rightarrow$ \textbf{Grp}, where \textbf{CRng} is the category of commutative rings.
\end{example}
\begin{definition}
 A functor which simply forgets some or all of the structure of an algebraic object is known as \textbf{forgetful functor}. Let us give an example to clarify this:
\end{definition}
\begin{example}
A forgetful functor $F$:\textbf{Grp} $\rightarrow$ \textbf{Set} assigns to each group $G$, the set $F\hspace{1mm}G$ of its elements (thus forgetting the multiplication and hence the group structure) and assigns to each morphism $f:G \rightarrow G'$ of groups the same function $f$, regarded just as a function between sets.
\end{example}
\begin{definition}
A functor $T:A \rightarrow B$ is  \textbf{full} when to every pair $a,a'$ of objects of $A$ and to every arrow $g:Ta \rightarrow Ta'$ of $B$, there is an arrow $f:a \rightarrow a'$ of $A$ with $g=T\hspace{1mm}f$.\\
It is very clear that composite of two full functors is also a full functor.
\end{definition}
\begin{definition}
A functor $T:\mathcal{A} \rightarrow \mathcal{B}$ is \textbf{faithful} if for all $a,b \in \obj(\mathcal{A})$, the functions $\Hom_\mathcal{A}(a,b) \rightarrow \Hom_\mathcal{B}(Ta,Tb)$ given by $f \mapsto T\hspace{1mm}f$ are injections. \\
Again composite of two faithful functors is a faithful functor.
\end{definition}
\begin{example}
The previous forgetful functor \textbf{Grp} $\rightarrow$ \textbf{Set} is a faithful functor. However it is not a full functor.
\end{example}
\begin{definition}
If $\mathcal{C}$ and $\mathcal{D}$ are two categories, then a \textbf{contravariant functor} $T:\mathcal{C} \rightarrow \mathcal{D}$ is a function such that:
\begin{enumerate}
\item if $a \in \obj(\mathcal{C})$, then $T(a) \in \obj(\mathcal{D})$.\\
\item if $f: a \rightarrow a'$ in $\mathcal{C}$, then $T(f): T(a') \rightarrow T(a)$ in $\mathcal{D}$.\\
\item if \begin{tikzcd} a \arrow{r}{f} & a' \arrow{r}{g} & a'' \end{tikzcd} in $\mathcal{C}$, then \begin{tikzcd} T(a'') \arrow{r}{T(g)} & T(a') \arrow{r}{T(f)} & T(a) \end{tikzcd} in $\mathcal{D}$ and
\[ T(gf)= T(f)T(g).  \]
\item $T(1_a)=1_{T(a)}$ \hspace{2mm} for each $a \in \obj(\mathcal{C})$.
\end{enumerate}
\end{definition}
\begin{example}
Let us give an example of a contravariant functor.\\ 
 If $\mathcal{C}$ is a category and $B \in \obj(\mathcal{C})$, then the \textbf{contravariant $\Hom$ functor} $T^B:\mathcal{C}\rightarrow \textbf{Sets}$ denoted by $\Hom(-,B)$, defined as
\[  T^B(C)=\Hom(C,B) \hspace{3mm} \text{for all} \hspace{2mm} C \in \obj(\mathcal{C})\]
and if $f:C \rightarrow C'$ in $\mathcal{C}$, then $T^B(f):\Hom(C',B) \rightarrow \Hom(C,B)$ is given by
\[  T^B(f): h \mapsto hf\]
\end{example}
\subsection{Natural Transformation}
\begin{definition}
Given two functors $S,T:\mathcal{C} \rightarrow \mathcal{B}$, a \textbf{natural transformation} is a function $\tau : S \rightarrow T$ which assigns to each object $c$ in $\mathcal{C}$, an arrow $\tau_c=\tau \hspace{1mm}c: Sc \rightarrow Tc$ of $\mathcal{B}$ in such a way that every arrow $f: c \rightarrow c'$ in $\mathcal{C}$ yields a commutative diagram,\\
\[\begin{tikzcd}
 S \hspace{1mm}c \arrow{r}{\tau \hspace{1mm}c} \arrow{d}{S \hspace{1mm}f}
  & T \hspace{1mm}c \arrow{d}{T \hspace{1mm}f}\\
  S \hspace{1mm}c' \arrow{r}{\tau \hspace{1mm}c'} & T \hspace{1mm}c'
\end{tikzcd} \]
\end{definition}
When it holds, we also say that $\tau_c: Sc \rightarrow Tc$ is \textit{natural} in $c$.\hspace{1mm}A natural transformation is also called a \textit{morphism of functors}.
\begin{example}
Let $\det_K M$ be the determinant of the $n \times n$ matrix $M$ with entries from the commutative ring $K$, while $K^\ast$ denotes the group of units of $K$. Thus $M$ is non-singular when $\det_K M$ is a unit and $\det_K$ is a morphism $\GL_n K \rightarrow K^\ast$ of groups. As the determinant is defined by the same formula for all rings $K$, each morphism $f:K \rightarrow K'$ of rings leads to a commutative diagram
\[ \begin{tikzcd}
  \GL_nK \arrow{r}{\det_K} \arrow{d}{\GL_n f}
  & K^\ast \arrow{d}{f^\ast}\\
 \GL_n K' \arrow{r}{\det_{K'}} & K'^\ast
\end{tikzcd}
\]
this states that the transformation $ \det :\GL_n \rightarrow (\hspace{1mm})^\ast $ is natural between two functors \textbf{CRng} $\rightarrow$ \textbf{Grp}.
\end{example}
\begin{example}
For each group $G$, the projection $p_G: G \rightarrow G/[G,G]$ to the factor commutator group defines a transformation $p$ from the identity functor on \textbf{Grp} to the factor commutator functor \textbf{Grp} $\rightarrow$ \textbf{Ab} $\rightarrow$ \textbf{Grp}. Moreover, $p$ is natural because each group homomorphism $f: G \rightarrow H$ defines the evident homomorphism $f'$ for which we get the following commutative diagram
\[ \begin{tikzcd}
G \arrow{r}{p_G} \arrow{d}{f}
  & \frac{G}{[G,G]} \arrow{d}{f'}\\
  H \arrow{r}{p_H} & \frac{H}{[H,H]}
  \end{tikzcd}
  \] 
\end{example}
\subsection{Product of Categories}
We are going to construct a new category $\mathcal{B} \times \mathcal{C}$ from given categories $\mathcal{B}$ and $\mathcal{C}$ in the following way:\\
An object of  $\mathcal{B} \times \mathcal{C}$ is a pair $\langle b,c \rangle$ where $b\in \mathcal{B}$ and $c \in \mathcal{C}$; an arrow $\langle b,c \rangle \rightarrow \langle b',c' \rangle$ of  $\mathcal{B} \times \mathcal{C}$ is a pair $\langle f,g \rangle$ of arrows $f:b \rightarrow b'$ and $g:c \rightarrow c'$ and the composite of two such arrows
\[ \begin{tikzcd}
 \langle b,c \rangle \arrow{r}{\langle f,g \rangle} & \langle b',c' \rangle \arrow{r}{\langle f',g' \rangle} & \langle b'',c''\rangle
\end{tikzcd}\]
is defined by \begin{equation}\langle f',g'\rangle \circ \langle f,g \rangle = \langle f' \circ f , g' \circ g \rangle  \end{equation}
\begin{definition}
Functors $P$,$Q$\\
\[\begin{tikzcd} \mathcal{B} \times \mathcal{C} \arrow{r}{P} & \mathcal{B}  \end{tikzcd} ;\begin{tikzcd}  \mathcal{B} \times \mathcal{C} \arrow{r}{Q} & \mathcal{C} \end{tikzcd} \]
called the \textit{projections} of product and are defined on arrows by,
\[ P\langle f,g\rangle=f ,\hspace{3mm} Q\langle f,g\rangle=g. \]
\end{definition}
\vspace{2mm}
\textbf{Universality of $P$ and $Q$}:\\
Given any category $\mathcal{D}$ and two functors,
\[\begin{tikzcd} \mathcal{D} \arrow{r}{R} & \mathcal{B}  \end{tikzcd} ;\begin{tikzcd}  \mathcal{D} \arrow{r}{T} & \mathcal{C} \end{tikzcd} \]
there is a unique functor $F:\mathcal{D} \rightarrow \mathcal{B} \times \mathcal{C}$ with $P\hspace{1mm}F=R$ and $Q\hspace{1mm}F=T$; these two conditions require that $Fh$ for any arrow $h \in \mathcal{D}$, must be $\langle Rh,Th \rangle$.\\ 
  Conversely, this value for $Fh$ makes $F$ a functor with the required properties.\\
 %\[\begin{tikzcd} 
 % \mathcal{D} \arrow{dl}{R}               \arrow{d}{F}                                  \arrow{dr}{T}\\
 %             \mathcal{B} \arrow{l}{P} &  \mathcal{B}\times \mathcal{C} \arrow{r}{Q} &  \mathcal{C}
 %end{tikzcd}\]

 Two functors $U:\mathcal{B} \rightarrow \mathcal{B'}$ and $V: \mathcal{C} \rightarrow \mathcal{C'}$ have a product $ U \times V: \mathcal{B} \times \mathcal{C} \rightarrow \mathcal{B'} \times \mathcal{C'}$ which can be described explicitely on objects and arrows as 
 \[ (U \times V)\langle b,c \rangle=\langle Ub,Vc \rangle ;\hspace{2mm} (U \times V)\langle f,g \rangle= \langle Uf,Vg\rangle\]
 The uniqueness of the functor $U \times V$ can be seen from the following commutative diagrams,
 \[ \begin{tikzcd} 
 \mathcal{B} \times \mathcal{C} \arrow{r}{P} \arrow{d}{U \times V} & \mathcal{B} \arrow{d}{U}\\
 \mathcal{B'} \times \mathcal{C'} \arrow{r}{P'} & \mathcal{B'} 
 \end{tikzcd} \hspace{8mm}  \begin{tikzcd} 
 \mathcal{B} \times \mathcal{C} \arrow{r}{Q} \arrow{d}{U \times V} & \mathcal{C} \arrow{d}{V}\\
 \mathcal{B'} \times \mathcal{C'} \arrow{r}{Q'} & \mathcal{C'} 
 \end{tikzcd}\]
 Therefore the product $\times$ is a pair of functions: To each pair $\langle \mathcal{B},\mathcal{C}\rangle$ of categories a new category $\mathcal{B} \times \mathcal{C}$; to each pair of functors $\langle U,V\rangle$ a new functor $U \times V$, also as $(U' \times V')\circ (U \times V)=U'U \times V'V$ thus the composite $U' \circ U$ and $V' \circ V$ are defined. Hence the operation $\times$ itself is a functor $\times:\textbf{Cat} \rightarrow \textbf{Cat}$.
 \begin{definition}
 Functor $G: \mathcal{B} \times \mathcal{C} \rightarrow \mathcal{D}$ from a product category is called a \textbf{bifunctor} on $\mathcal{B}$ and $\mathcal{C}$.
 \end{definition}
 \begin{example}
 The cartesian product $X \times Y$ of two sets $X$ and $Y$ is a bifunctor \textbf{Set} $\times$ \textbf{Set} $\rightarrow$ \textbf{Set}.
 \end{example}
 We now state a proposition regarding the product categories and bifunctors:\\
 \begin{proposition}
 Let $\mathcal{B}$, $\mathcal{C}$ and $\mathcal{D}$ be categories. For all objects $c \in \mathcal{C}$ and $b \in \mathcal{B}$, let
 \[ L_c:\mathcal{B} \rightarrow \mathcal{D},\hspace{3mm} M_b: \mathcal{C} \rightarrow \mathcal{D}\]
 be functors such that $M_b(c)=L_c(b)$ for all $b,c$. Then there exists a bifunctor $S:\mathcal{B} \times \mathcal{C} \rightarrow \mathcal{D}$ with $S(-,c)=L_c$ for all $c$ and $S(b,-)=M_b$ for all $b$ if and only if for every pair of arrows $f:b \rightarrow b'$ and $g:c \rightarrow c'$ one can obtain
  \begin{equation} 
  M_{b'}g \circ L_cf=L_{c'}f \circ M_bg
 \end{equation} 
 \end{proposition}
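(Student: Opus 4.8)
The plan is to prove the two implications separately, in both cases exploiting the single observation that in $\mathcal{B}\times\mathcal{C}$ an arrow $\langle f,g\rangle\colon\langle b,c\rangle\to\langle b',c'\rangle$ factors in two ways through the ``mixed'' objects,
\[ \langle f,g\rangle=\langle 1_{b'},g\rangle\circ\langle f,1_c\rangle=\langle f,1_{c'}\rangle\circ\langle 1_b,g\rangle, \]
which is immediate from the composition law $\langle f',g'\rangle\circ\langle f,g\rangle=\langle f'\circ f,\,g'\circ g\rangle$ for the product category.

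For the forward implication I would start from a bifunctor $S$ with $S(-,c)=L_c$ and $S(b,-)=M_b$ for all $b,c$, apply $S$ to both factorizations above, and use that $S$ preserves composition. Since $S\langle f,1_c\rangle=L_cf$, $S\langle 1_{b'},g\rangle=M_{b'}g$, and symmetrically $S\langle f,1_{c'}\rangle=L_{c'}f$, $S\langle 1_b,g\rangle=M_bg$, equating $S\langle 1_{b'},g\rangle\circ S\langle f,1_c\rangle$ with $S\langle f,1_{c'}\rangle\circ S\langle 1_b,g\rangle$ yields exactly $M_{b'}g\circ L_cf=L_{c'}f\circ M_bg$. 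No further work is needed here.

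For the converse, assuming the displayed identity, the construction is forced. Put $S\langle b,c\rangle:=L_c(b)=M_b(c)$ on objects (the two values agree by the standing hypothesis $M_b(c)=L_c(b)$), and on an arrow $\langle f,g\rangle\colon\langle b,c\rangle\to\langle b',c'\rangle$ set $S\langle f,g\rangle:=M_{b'}g\circ L_cf=L_{c'}f\circ M_bg$, the two presentations being equal by hypothesis. Preservation of identities is immediate, since $S\langle 1_b,1_c\rangle=M_b(1_c)\circ L_c(1_b)=\id\circ\id=\id_{S\langle b,c\rangle}$, and taking $g=1_c$ resp.\ $f=1_b$ recovers $S(-,c)=L_c$ resp.\ $S(b,-)=M_b$.

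The one step that requires care --- and which I expect to be the main (indeed only) obstacle --- is verifying that $S$ preserves composition: for $\langle f,g\rangle\colon\langle b,c\rangle\to\langle b',c'\rangle$ and $\langle f',g'\rangle\colon\langle b',c'\rangle\to\langle b'',c''\rangle$ one computes, using functoriality of the individual $L_c$'s and $M_b$'s and choosing, at each stage, whichever of the two equal presentations of $S\langle-,-\rangle$ makes adjacent factors collapse into a single $L$- or $M$-image,
\[ S\langle f',g'\rangle\circ S\langle f,g\rangle=M_{b''}g'\circ L_{c'}f'\circ L_{c'}f\circ M_bg=M_{b''}g'\circ L_{c'}(f'f)\circ M_bg, \]
\[ S\langle f'f,\,g'g\rangle=M_{b''}(g'g)\circ L_c(f'f)=M_{b''}g'\circ\bigl(M_{b''}g\circ L_c(f'f)\bigr), \]
and these two agree once we know $M_{b''}g\circ L_c(f'f)=L_{c'}(f'f)\circ M_bg$, which is the given compatibility identity applied to the \emph{composite} arrow $f'f\colon b\to b''$ together with $g\colon c\to c'$. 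The subtle point is precisely that the compatibility identity must be available for arbitrary arrows, so that it can be invoked for $f'f$ rather than just for $f$ and $f'$ individually; once that is granted, everything else is routine bookkeeping, and the two hypotheses of functoriality established above show that the constructed $S$ is a bifunctor with the required restrictions.
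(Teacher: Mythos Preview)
Your proof is correct and follows essentially the same approach as the paper: both arguments hinge on the two factorizations $\langle f,g\rangle=\langle 1_{b'},g\rangle\circ\langle f,1_c\rangle=\langle f,1_{c'}\rangle\circ\langle 1_b,g\rangle$ in the product category, applying $S$ to obtain the necessary direction and using the common value to define $S\langle f,g\rangle$ for the converse. In fact you go further than the paper, which for the converse simply asserts that ``one can verify'' the construction yields a bifunctor; your explicit check of composition preservation (invoking the compatibility identity for the composite arrow $f'f$ together with $g$) fills in exactly what the paper leaves to the reader.
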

 \begin{proof}
 For the necessary part what we need to show is that
 \[ S(b',g)S(f,c)=S(f,c')S(b,g)\]
 and this condition can be viewed as a commutative diagram as follows
 \[\begin{tikzcd} 
  S(b,c) \arrow{r}{S(b,g)} \arrow{d}{S(f,c)} & S(b,c') \arrow{d}{S(f,c')} \\
  S(b',c) \arrow{r}{S(b',g)}  & S(b',c')
      \end{tikzcd}\]
     and this is just condition (85) rewritten, and we can achieve the above part simply applying functor $S$ to the following equation which can be found by writing $b$, $c$ for their corresponding arrows and from definition (84) of composite in $\mathcal{B} \times \mathcal{C}$,
     \[ \langle b',g \rangle \circ \langle f,c \rangle=\langle b'f,gc \rangle = \langle f,g \rangle =\langle fb,c'g \rangle=\langle f,c'\rangle \circ \langle b,g \rangle \]
  Conversely, one can verify that given all $L_c$ and $M_b$ condition (85) defines $S(f,g)$ for every pair $f,g$ as well as the definition (84) yields a bifunctor $S$ with its required properties.
  \end{proof}
  Our next aim is to define a natural transformation between bifunctors $S,S':\mathcal{B} \times \mathcal{C} \rightarrow \mathcal{D}$.\hspace{2mm}Let $\alpha$ be that function which assigns to each pair of objects $b \in \mathcal{B}$, $c \in \mathcal{C}$ an arrow
  \begin{equation} \alpha(b,c): S(b,c) \rightarrow S'(b,c) \end{equation}
  in $\mathcal{D}$. We call $\alpha$ \textit{natural} in $b$ if for each $c \in \mathcal{C}$ the components $\alpha(b,c)$ for all $b$ define
  \[ \alpha(-,c): S(-,c) \rightarrow S'(-,c)\]
  a natural transformation of functors $\mathcal{B} \rightarrow \mathcal{D}$.\\ 
  Based on such definition we state another proposition which can be verified easily.
  \begin{proposition}
  For bifunctors $S,S'$, the function $\alpha$ written in (86) is a natural transformation $\alpha:S \rightarrow S'$ of bifunctors if and only if $\alpha(b,c)$ is natural in $b$ for each $c \in \mathcal{C}$ and natural in $c$ for each $b \in \mathcal{B}$.
\end{proposition}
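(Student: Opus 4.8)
The plan is to unwind what it means for $\alpha$ to be a natural transformation of the functors $S,S'\colon\mathcal{B}\times\mathcal{C}\to\mathcal{D}$, and then to exploit the fact that an arbitrary arrow of the product category factors as an arrow moving only in the $\mathcal{B}$-coordinate followed by one moving only in the $\mathcal{C}$-coordinate --- exactly the factorisation already used in the proof of Proposition 5.6.1. Recall that $\alpha\colon S\to S'$ is a natural transformation precisely when, for every arrow $\langle f,g\rangle\colon\langle b,c\rangle\to\langle b',c'\rangle$ of $\mathcal{B}\times\mathcal{C}$, the square
\[
\begin{tikzcd}
S(b,c) \arrow{r}{\alpha(b,c)} \arrow{d}{S\langle f,g\rangle} & S'(b,c) \arrow{d}{S'\langle f,g\rangle} \\
S(b',c') \arrow{r}{\alpha(b',c')} & S'(b',c')
\end{tikzcd}
\]
commutes. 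Throughout I write $L_c=S(-,c)$, $L'_c=S'(-,c)$, $M_b=S(b,-)$, $M'_b=S'(b,-)$ for the partial functors, so that by functoriality of $S$ (and $S'$) on the product category one has $S\langle f,1_c\rangle=L_cf$ and $S\langle 1_b,g\rangle=M_bg$, and similarly for $S'$.

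For the forward implication I would simply restrict the square above to the two families of arrows $\langle f,1_c\rangle$ and $\langle 1_b,g\rangle$. The square for $\langle f,1_c\rangle$ then reads $L'_cf\circ\alpha(b,c)=\alpha(b',c)\circ L_cf$, which is literally the naturality square of $\alpha(-,c)\colon L_c\to L'_c$, so $\alpha(b,c)$ is natural in $b$ for each fixed $c$; the square for $\langle 1_b,g\rangle$ is the naturality square of $\alpha(b,-)\colon M_b\to M'_b$, so it is natural in $c$ for each fixed $b$.

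For the converse I would take an arbitrary $\langle f,g\rangle$ and use the identity in $\mathcal{B}\times\mathcal{C}$
\[
\langle f,g\rangle=\langle 1_{b'},g\rangle\circ\langle f,1_c\rangle,
\]
which is an instance of the composition rule (84) and is already displayed in the proof of Proposition 5.6.1. Applying the functors $S$ and $S'$ yields $S\langle f,g\rangle=S\langle 1_{b'},g\rangle\circ S\langle f,1_c\rangle$ and the analogous factorisation of $S'\langle f,g\rangle$. I would then paste the naturality square for $\langle f,1_c\rangle$ (commuting by naturality in $b$ at the object $c$) on top of the naturality square for $\langle 1_{b'},g\rangle$ (commuting by naturality in $c$ at the object $b'$); the outer rectangle of the pasted diagram is exactly the naturality square for $\langle f,g\rangle$, hence it commutes, and $\alpha$ is a natural transformation of bifunctors.

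There is no genuine obstacle here: the only points needing care are the functoriality identities $S\langle f,1_c\rangle=L_cf$, $S\langle 1_b,g\rangle=M_bg$ (and their $S'$ analogues), and the observation that the intermediate object $S(b',c)$ in the factorisation is unambiguous because $M_{b'}(c)=L_c(b')$. One could equally run the argument through the mirror factorisation $\langle f,g\rangle=\langle f,1_{c'}\rangle\circ\langle 1_b,g\rangle$, whose equality with the first is precisely condition (85); this makes the proof visibly symmetric in the two variables.
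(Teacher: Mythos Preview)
Your proof is correct and is exactly the argument the paper has in mind: the paper omits the proof entirely, saying only that the proposition ``can be verified easily,'' but the placement immediately after Proposition~5.6.1 and its factorisation $\langle f,g\rangle=\langle b',g\rangle\circ\langle f,c\rangle=\langle f,c'\rangle\circ\langle b,g\rangle$ makes clear that the intended verification is precisely the pasting-of-squares argument you give. There is nothing to add.
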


\newpage
\section{Some Topics in Homological Algebra}
\subsection{Some useful definitions }
\begin{definition}[Chain complex]
A \textbf{chain complex} $(C_.,d_.)$ is a sequence of abelian groups (or modules) and homomorphisms
\[ \begin{tikzcd}
 \dots \arrow{r}{d_{n+1}} & C_n \arrow{r}{d_n} & C_{n-1} \arrow{r}{d_{n-1}} & C_{n-2} \arrow{r} & \dots
\end{tikzcd} \]
with the property $d_n \circ d_{n+1}= d_nd_{n+1} = 0$ for all $n$. Homomorphisms $d_n$ are called \textit{differentials}.
\end{definition}
\begin{definition}[Exact sequence]
A chain complex defined above is called \textbf{exact} if
\[ \ker d_n = \Ima d_{n+1} \hspace{4mm}\text{for all}\hspace{1mm}n.  \]
\end{definition}
\begin{definition}[Co-chain complex]
A \textbf{co-chain complex} $(C^.,d^.)$ is a sequence of abelian groups (or modules) and homomorphisms
\[ \begin{tikzcd}
 \dots \arrow{r}{d^{n-1}} & C^n \arrow{r}{d^n} & C^{n+1} \arrow{r}{d^{n+1}} & C^{n+2} \arrow{r} & \dots
\end{tikzcd} \]
with the property $d^n \circ d^{n-1} = 0$ for all $n$.
\end{definition}
\begin{definition}[Homology and Cohomology]
A \textbf{homology} of a chain complex is the group $H_n(C_.) = \ker d_n / \Ima d_{n+1}$.\\ 
Similarly a \textbf{cohomology} of a co-chain complex is the group\\   $H^n(C^.) = \ker d^n / \Ima d^{n-1}$.
\end{definition}
\begin{definition}
$C_n$ are called \textbf{n-dimensional chains}. Similarly $C^n$ are called \textbf{n-dimensional co-chains}. $\ker d_n = Z_n$ are called \textbf{n-dimensional cycles}. Similarly $\ker d^n = Z^n$ are called \textbf{n-dimensional co-cycles}.\\  $\Ima d_{n+1} = B_n$ are called \textbf{boundaries} and $\Ima d^{n-1} = B^n$ are called \textbf{co-boundaries}.
\end{definition}
\begin{definition}[Chain homotopy]
A chain map $\phi : (C_.,d_.^C) \rightarrow (D_.,d_.^D)$ is \textbf{null homotopic} if there are maps $h_n : C_n \rightarrow D_{n+1}$ such that $\phi = d_{n+1}^D h_n + h_{n-1}d_n^C$. The maps $\{h_n\}$ are called a chain contraction.\\ 
We say that two chain maps $\phi, \psi : C_. \rightarrow D_.$ are \textbf{chain homotopic} if their difference is null homotopic.
\end{definition}
Next we will state an important theorem without the proof.
\begin{theorem}
Let 
\[ \begin{tikzcd}
0 \arrow{r} & K_. \arrow{r}{f} & L_. \arrow{r}{g} & M_. \arrow{r} & 0
\end{tikzcd}   \]
be a short exact sequence of chain complexes $K_.,L_.,M_.$. Then the long sequence of their homologies
\[ \begin{tikzcd}
\dots \arrow{r} & H_n(K_.) \arrow{r}{H_n(f)} & H_n(L_.) \arrow{r}{H_n(g)} & H_n(M_.) \arrow{r}{\delta} & H_{n-1}(K_.) \\
             \arrow{r}{H_{n-1}(f)} & H_{n-1}(L_.) \arrow{r}{H_{n-1}(g)} & H_{n-1}(M_.) \arrow{r} & \dots
\end{tikzcd}\]
where $\delta$ is the connecting homomorphism, is exact.
\end{theorem}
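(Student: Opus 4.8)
Write $d^{K},d^{L},d^{M}$ for the differentials of the three complexes. The plan is to construct the connecting homomorphism $\delta\colon H_n(M_.)\to H_{n-1}(K_.)$ by an explicit diagram chase and then verify exactness at each of the three positions $H_n(L_.)$, $H_n(M_.)$, $H_{n-1}(K_.)$ separately.

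First I would define $\delta$. Given a class in $H_n(M_.)$ represented by a cycle $m\in M_n$, use surjectivity of $g_n$ to choose $\ell\in L_n$ with $g_n(\ell)=m$. Then $g_{n-1}(d^{L}\ell)=d^{M}g_n(\ell)=d^{M}m=0$, so $d^{L}\ell\in\ker g_{n-1}=\Ima f_{n-1}$, and by injectivity of $f_{n-1}$ there is a unique $k\in K_{n-1}$ with $f_{n-1}(k)=d^{L}\ell$. A short computation $f_{n-2}(d^{K}k)=d^{L}f_{n-1}(k)=d^{L}d^{L}\ell=0$ together with injectivity of $f_{n-2}$ shows $k$ is a cycle, and I set $\delta[m]=[k]$. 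Next I would check that $\delta$ is well defined: replacing $\ell$ by another lift changes it by an element of $\Ima f_n$, hence changes $k$ by a $d^{K}$-boundary; replacing $m$ by a homologous cycle $m+d^{M}m'$ and adjusting the lift accordingly again changes $k$ only by a boundary. Linearity of $\delta$ is then immediate from the construction.

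Finally I would establish exactness, which amounts to six inclusions: $\Ima H_n(f)\subseteq\ker H_n(g)$ and the reverse at $H_n(L_.)$; $\Ima H_n(g)\subseteq\ker\delta$ and the reverse at $H_n(M_.)$; $\Ima\delta\subseteq\ker H_{n-1}(f)$ and the reverse at $H_{n-1}(K_.)$. Each is a routine diagram chase using exactness of the given sequence in every degree; for example, to get $\ker\delta\subseteq\Ima H_n(g)$ one takes $[m]$ with the associated $k$ of the form $d^{K}k'$, and observes that $\ell-f_n(k')$ is a cycle in $L_n$ whose $g$-image represents $[m]$.

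I expect the only genuinely delicate point to be the well-definedness of $\delta$; everything else is a direct chase once that is in hand. An alternative would be to derive the statement from the snake lemma applied to the short exact sequences relating cycles and boundaries in each degree, but the self-contained chase above seems the cleanest route for the present exposition.
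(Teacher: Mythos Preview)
Your proposal is correct and is the standard diagram-chase proof of the long exact sequence in homology; the construction of $\delta$ and the six inclusions you outline are exactly what is needed, and your identification of well-definedness of $\delta$ as the only point requiring care is accurate.

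Note, however, that the paper does not actually prove this theorem: its ``proof'' consists solely of a reference to theorem~6.10 of [6] (Rotman's \emph{An Introduction to Homological Algebra}). So rather than matching or diverging from the paper's argument, your proposal supplies the content that the paper delegates to the literature. The chase you sketch is essentially the one found in that reference (and in virtually every homological algebra text), so in spirit your approach coincides with what the paper is pointing to.
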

\begin{proof}
For proof of the above theorem, we refer to theorem 6.10 of [6].
\end{proof}

\subsection{Free resolution}
\begin{definition}
A \textbf{resolution} of a module $M$ is an exact sequence of modules $M_n$ with differentials of the form
\[ \begin{tikzcd}
\dots \arrow{r}{d_{n+2}} & M_{n+1} \arrow{r}{d_{n+1}} & M_n \arrow{r}{d_n} & M_{n-1} \arrow{r}{d_{n-1}} & \dots \\
 \hspace{36mm}\dots  \arrow{r}{d_2} & M_1 \arrow{r}{d_1} & M_0 \arrow{r} & M \arrow{r} & 0.
\end{tikzcd} \]
We call a resolution \textbf{free} if each of $M_n$ are free, and \textbf{projective} if each of $M_n$ are projective.
\end{definition}
\begin{remark}
As each free module is a projective module[21], so any free resolution is a projective resolution.
\end{remark}
\subsection{One example of free resolution}
\begin{example}[Koszul complex in commutative algebra]
Let $R$ be a commutative ring. Let $M = R^k$ be a finitely generated $R$-module. Let $K_n = \Lambda _R^n M$ be its exterior algebra. Let us fix an element $m \in M$. Left exterior multiplication by $m$ produce a differential on $K_n$ since it's square is equal to $0$. We call this complex \textit{Koszul complex} [20] and denote it by $K(m)_.$.\\ 
\begin{remark}
If $e_1,e_2,\dots,e_k$ is a basis of $M$ then the set of exterior products $e_{i_1}\wedge e_{i_2} \wedge \dots \wedge e_{i_n}$ with $1 \leq i_1 < i_2 < \dots < i_n \leq n$ form a basis of $\Lambda _R^n M$.
\end{remark}
Let $R= \mathbb{K}[x,y,z]$. Our claim is that Koszul complex for $m = (x,y,z) \in M = R^3$ is a free resolution.\\ 
Now,
\[ \Lambda _R^0(R^3) = R\cdot 1 = R    \]
\[ \Lambda _R^1(R^3) = Re_1 \oplus Re_2 \oplus Re_3 \simeq R^3      \]
\[ \Lambda _R^2(R^3) = R(e_1 \wedge e_2) \oplus R(e_2 \wedge e_3) \oplus R(e_1 \wedge e_3) \simeq R^3  \]
\[ \Lambda _R^3(R^3) = R(e_1 \wedge e_2 \wedge e_3) \simeq R.    \]
So our chain complex $K(m)_.$ is of the form
\[ \begin{tikzcd} 
0 \arrow{r}{h_{-1}} & R \arrow{r}{h_0} & R^3 \arrow{r}{h_1} & R^3 \arrow{r}{h_2} & R \arrow{r}{h_3} & 0.
\end{tikzcd} \]
Next
\[ h_0 : R \rightarrow R^3   \]
is given by
\[  r \mapsto m \wedge r   \]
\[   =(xe_1+ye_2+ze_3) \wedge r  \]
\[   = xre_1 +yre_2 + zre_3     \]
Hence $\ker h_0 = \{ r \hspace{2mm} |\hspace{2mm} h_0(r) = 0 \}$ is given by conditions $xr=yr=zr=0$, which implies that $\ker h_0 = \{ 0\}$ and it is equal to $\Ima h_{-1}$.\\ 
We also observed that $\Ima h_0 = \langle (x,y,z) \rangle$ i.e. the elements of $\Ima h_0$ are generated by $x,y,z$.\\
Next,
\[ h_1 : R^3 \rightarrow R^3    \]
given by
\[ r_1e_1 + r_2e_2 + r_3e_3 \mapsto m \wedge (r_1e_1 + r_2e_2 + r_3e_3)   \]
\[ =(xe_1 +ye_2 +ze_3) \wedge (r_1e_1 + r_2e_2 + r_3e_3)    \]
\[ = (xr_2-yr_1)e_1 \wedge e_2 + (yr_3-zr_2)e_2 \wedge e_3 + (xr_3-zr_1)e_1 \wedge e_3.  \]
Our job is to find $\ker h_1$ which is given by the relations,
\[  xr_2 - yr_1 = 0  \]
\[  yr_3 - zr_2 = 0    \]
\[  xr_3 - zr_1 = 0   \]
from these three relations we get $(r_1 : r_2) = (x :y)$; $(r_2:r_3)=(y:z)$; $(r_1:r_3)=(x:z)$ which all together implies $(r_1:r_2:r_3)=(x:y:z)$. Hence $\ker h_1 = \langle (x,y,z) \rangle = \Ima h_0$.\\ 
We get that $\Ima h_1 = \{ r_1 : r_2 : r_3 \}$ where $r_1 = \langle (x,y) \rangle$, $r_2 = \langle (y,z) \rangle$, $r_3 = \langle (x,z) \rangle$.\\ 
Next we have
\[  h_2 : R^3 \rightarrow R   \]
given by
\[  r_1(e_1 \wedge e_2) + r_2(e_2 \wedge e_3) + r_3(e_1 \wedge e_3) \mapsto m \wedge (r_1(e_1 \wedge e_2) + r_2(e_2 \wedge e_3) + r_3(e_1 \wedge e_3))  \]
\[ = (xr_2 - yr_3 + zr_1)e_1 \wedge e_2 \wedge e_3.  \]
So $\ker h_2$ is given by the relation $xr_2 - yr_3 + zr_1 = 0$. So we have $yr_3=xr_2+zr_1$. From this relation it is very clear that $r_3$ can not depend on $y$ as otherwise we will have $y^2$ term in left hand side but in right hand side there is no $y^2$ term. If $r_3$ only depends on $x$ that implies that $r_1$ must be only $0$, later we will see it's impossible. Similarly $r_3$ can not only depends on $z$ as otherwise $r_2$ must be only $0$ and we will reach the impossibility. The only option is that $r_3$ is depending on both $x,z$.\\ 
Next from the relation $zr_1 = yr_3 - xr_2$ using the similar argument we get that $r_1$ can not depend on $z$. Similarly if $r_1$ only depends on $x$ and only depends on $y$ then $r_3$ and $r_2$ must be only $0$ always respectively which is not possible. Also it makes clear that in the previous argument why $r_1$ is not only $0$. So only suitable option is that $r_1$ depends on both $x,y$.\\ 
Similarly from the relation $xr_2 = yr_3 - zr_1$ and using the similar arguments we get $r_2$ is depending on both $y,z$. Also none of $r_1,r_2,r_3$ depend on all of $x,y,z$ as otherwise we will have $x^2,y^2,z^2$ terms which can not be matched with other terms inside the relation.\\ 
All together it implies that $\ker h_2 = \{ r_1 :r_2:r_3 \}$ where $r_1 =\langle (x,y)\rangle$, $r_2=\langle (y,z)\rangle$ and $r_3 = \langle (x,z) \rangle$ and hence $\ker h_2 = \Ima h_1$.\\ 
Also from the map $h_2$ we understand that $\Ima h_2 = R$. Our $\ker h_3 = R$ and thus we get $\ker h_3 = \Ima h_2$. Hence we have obtained a short exact sequence
\[ \begin{tikzcd} 
0 \arrow{r}{h_{-1}} & R \arrow{r}{h_0} & R^3 \arrow{r}{h_1} & R^3 \arrow{r}{h_2} & R \arrow{r}{h_3} & 0.
\end{tikzcd} \]
implies it is a free resolution.
\end{example}
\newpage  
\section{Further Research Directions}
In this section, we mention about two applications of Anick resolution which will link Gr\"{o}bner bases with core homological algebra as well as with homotopical algebra.

\subsection{Construction of derived functor from Anick resolution}
\begin{definition}
Let $\mathcal{P}$, $\mathcal{Q}$ be abelian categories[5] and let $\mathcal{F} : \mathcal{P} \rightarrow \mathcal{Q}$ be a covariant functor. Let
\[ \begin{tikzcd}  
0 \arrow{r} & A \arrow{r} & B \arrow{r} & C \arrow{r} & 0
 \end{tikzcd}\]
 be a short exact sequence of objects in $\mathcal{P}$. Then $\mathcal{F}$ is \textbf{right exact} if
 \[ \begin{tikzcd} 
 \mathcal{F}(A) \arrow{r} & \mathcal{F}(B) \arrow{r} & \mathcal{F}(C) \arrow{r} & 0
 \end{tikzcd}\]
 is exact.
\end{definition}
\begin{example}
Let $\mathcal{C}$, $\mathcal{D}$ be categories of abelian groups. Let us fix an abelian group $H$. Then 
\[ \mathcal{F}_H : G \rightarrow G \otimes_{\mathbb{Z}} H \]
is an example of a right exact functor from \textbf{Ab} to \textbf{Ab}.
\end{example}
\begin{definition}
Let $\mathcal{F}$ be a right exact functor from category $\mathcal{C}$ to category $\mathcal{D}$. Then \textbf{higher derived functor} of $\mathcal{F}$ is a collection of functors $\mathcal{F} = \mathcal{F}_0$, $\mathcal{F}_1$, $\mathcal{F}_2$, $\dots$ such that if
\[ \begin{tikzcd}  
0 \arrow{r} & A \arrow{r} & B \arrow{r} & C \arrow{r} & 0
 \end{tikzcd}\]
 is a short exact sequence, where $A, B, C \in \mathcal{C}$, then
 \[ \begin{tikzcd}
 \dots \arrow{r} & \mathcal{F}_1(A) \arrow{r} & \mathcal{F}_1(B) \arrow{r} & \mathcal{F}_1(C) \arrow{r} & \mathcal{F}_0(A)\\ 
  \arrow{r} & \mathcal{F}_0(B) \arrow{r} & \mathcal{F}_0(C) \arrow{r} & 0
 \end{tikzcd}\]
 is a long exact sequence.
\end{definition}
Now we will show how to compute higher derived functors. Given $A \in \mathcal{C}$, our aim is to compute $\mathcal{F}_i(A)$. We find a projective resolution $(P_., d_.)$ of $A$.
\[ \dots \rightarrow P_2 \rightarrow P_1 \rightarrow P_0 \rightarrow A \rightarrow 0   \]
Applying functor $\mathcal{F}$ on the above resolution gives us a chain complex $( \mathcal{F}(P_.), \mathcal{F}(d_.))$. By definition,
\[ \mathcal{F}_i(A) = H_i(\mathcal{F}(P_.), \mathcal{F}(d_.))  \]
\[  \mathcal{F}_0(A) = \mathcal{F}(A). \]
\begin{lemma}
The functor $\mathcal{F}_i(A)$ doesn't depend on the resolution $(P_.,d_.)$.
\end{lemma}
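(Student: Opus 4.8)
The plan is to show that any two projective resolutions of $A$ produce chain complexes that, after applying $\mathcal{F}$, have canonically isomorphic homology; the whole argument is powered by the \emph{Comparison Theorem} for projective resolutions. So the first step I would carry out is this: given two projective resolutions $(P_\bullet,d_\bullet)\to A$ and $(Q_\bullet,d'_\bullet)\to A$, the identity $\id_A$ lifts to a chain map $f_\bullet\colon P_\bullet\to Q_\bullet$, and any two such lifts are chain homotopic. Both halves are proved by induction on degree: assuming $f_{n-1}$ is built, the composite $P_n\xrightarrow{d_n}P_{n-1}\xrightarrow{f_{n-1}}Q_{n-1}$ lands in $\ker d'_{n-1}=\Ima d'_n$ by exactness of $Q_\bullet$, and since $P_n$ is projective the epimorphism $Q_n\twoheadrightarrow\Ima d'_n$ admits a section against this map, giving $f_n$; applying the identical diagram chase to the difference of two lifts yields the contracting homotopy.

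Next I would apply the Comparison Theorem twice to obtain chain maps $f_\bullet\colon P_\bullet\to Q_\bullet$ and $g_\bullet\colon Q_\bullet\to P_\bullet$ over $\id_A$. Then $g_\bullet f_\bullet$ and $\id_{P_\bullet}$ are two lifts of $\id_A$ along $P_\bullet\to A$, hence chain homotopic, and likewise $f_\bullet g_\bullet\simeq\id_{Q_\bullet}$. Now I invoke that $\mathcal{F}$, being an additive functor between abelian categories, transports a chain homotopy to a chain homotopy: applying $\mathcal{F}$ to an identity of the form $\phi-\psi=d_{n+1}h_n+h_{n-1}d_n$ and using additivity to get $\mathcal{F}(d_{n+1}h_n+h_{n-1}d_n)=\mathcal{F}(d_{n+1})\mathcal{F}(h_n)+\mathcal{F}(h_{n-1})\mathcal{F}(d_n)$ shows $\mathcal{F}(\phi)$ and $\mathcal{F}(\psi)$ are chain homotopic in $(\mathcal{F}(P_\bullet),\mathcal{F}(d_\bullet))$. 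Hence $\mathcal{F}(g_\bullet)\mathcal{F}(f_\bullet)\simeq\id$ and $\mathcal{F}(f_\bullet)\mathcal{F}(g_\bullet)\simeq\id$ on the respective complexes.

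Finally, since chain homotopic maps induce equal maps on homology, $H_i(\mathcal{F}(f_\bullet))$ and $H_i(\mathcal{F}(g_\bullet))$ are mutually inverse isomorphisms
\[ H_i(\mathcal{F}(P_\bullet),\mathcal{F}(d_\bullet))\;\cong\;H_i(\mathcal{F}(Q_\bullet),\mathcal{F}(d'_\bullet)), \]
so $\mathcal{F}_i(A)$ does not depend on the chosen resolution; one last check — that for three resolutions the three comparison isomorphisms agree, again because any two lifts of $\id_A$ are homotopic — makes the identification canonical. The main obstacle is genuinely the Comparison Theorem, and within it the point where projectivity is used to lift through the epimorphisms $Q_n\twoheadrightarrow\Ima d'_n$ and to build the homotopy; once that is in hand, the rest is the formal transport of homotopies through the additive functor $\mathcal{F}$ and reading off homology. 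I would also remark that the same reasoning shows each $\mathcal{F}_i$ is a functor (a morphism $A\to A'$ lifts to a chain map of chosen resolutions, unique up to homotopy), which is precisely what legitimizes feeding Anick's resolution — a concrete, generally non-minimal, free and hence projective resolution — into this construction in place of the bar resolution.
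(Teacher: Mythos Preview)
Your argument is correct and is exactly the standard proof via the Comparison Theorem for projective resolutions. The paper, however, does not give a proof at all: it simply writes ``For proof, we refer to [6]'' (Rotman's \emph{An Introduction to Homological Algebra}), so your proposal actually supplies what the paper omits, and the argument you sketch is precisely the one found in that reference.
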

\begin{proof}
For proof, we refer to [6].
\end{proof}
\begin{remark}
Short exact sequence gives rise to long exact sequence of homologies.
\end{remark}
This is stated in theorem 6.1.1. Hence $\mathcal{F}_i$ are derived functor of $\mathcal{F}$. 
\begin{definition}
Let $\mathcal{C}$ be a category of right $R$ modules where $R$ is an algebra over a field $\mathbb{K}$ and let $\mathcal{D}$ be category of vector spaces over $\mathbb{K}$. Let $N$ is a given left $R$-module. Then $\mathcal{F}_N : M \rightarrow M \otimes_R N$ is a right exact functor from $\mathcal{C}$ to $\mathcal{D}$. We compute derived functors of $\mathcal{F}_N$. They are called
\[ \mathcal{F}_{N,i}(M) = \Tor_i^R(M,N)  \]
\end{definition}
Now we will show how to compute some derived functors from our Anick resolution. Let $A$ be an augmented algebra over $\mathbb{K}$. This makes $\mathbb{K}$ a left and right module. Our aim is to construct the structure of 
\[ \Tor_i^A(\mathbb{K},\mathbb{K}) \]
We have our Anick resolution of $\mathbb{K}$
\begin{equation} \begin{tikzcd} 
\dots \arrow{r}{d_2} & C_1 \otimes_{\mathbb{K}} A \arrow{r}{d_1} & C_0 \otimes_{\mathbb{K}} A \arrow{r}{d_0} & \mathbb{K} \arrow{r} & 0
\end{tikzcd} \end{equation}
We tensor each element and differentials of the above resolution with $\mathbb{K}$ over $A$. Hence we get 
\[ ( C_i \otimes_{\mathbb{K}} A ) \otimes_A \mathbb{K} = C_i \otimes_{\mathbb{K}} \mathbb{K} \simeq C_i. \]
Hence we obtain the following sequence
\[ \begin{tikzcd} 
\dots \arrow{r}{d_3 \otimes_A \mathbb{K}} & C_2 \arrow{r}{d_2 \otimes_A \mathbb{K}} & C_1 \arrow{r}{d_1 \otimes_A \mathbb{K}} & C_0.
\end{tikzcd}\]
Let $c \in C_i \otimes 1 \simeq C_i$. Then $d_i(c)$ is of the form
\[ d_i(c) = \sum_l c^l \otimes \sigma ^l  \]
for some $c^l$ and $\sigma^l$. Then
\[ d_i \otimes_A \mathbb{K} (c) = \sum_l v^l \cdot \epsilon ( r^l)  \]
for some $v^l, r^l$ where $\epsilon: A \rightarrow \mathbb{K}$ is the augmentation map. In particular if $A$ is graded i.e. $A = \oplus_i A_i$ with $A_0 = \mathbb{K}$ then
\[ \epsilon|_{A_i} = 0 \hspace{2mm} \text{for} \hspace{2mm} i \neq 0.  \]
\begin{definition}
A resolution is said to be \textbf{minimal} if 
\[ d_i \otimes_A \mathbb{K} = 0 \hspace{2mm}\text{for all}\hspace{1mm}i.  \]
\end{definition}
\begin{corollary}
If we have a resolution of the form given in equation (87) and if it is minimal, then 
\[   \Tor_i^A(\mathbb{K},\mathbb{K}) \simeq C_i. \]
\end{corollary}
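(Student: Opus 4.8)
The plan is to obtain the statement as an immediate consequence of the definition of $\Tor$ together with the two inputs already at hand. First I would observe that Anick's resolution,
\[ \cdots \xrightarrow{d_2} C_1 \otimes_{\mathbb{K}} A \xrightarrow{d_1} C_0 \otimes_{\mathbb{K}} A \xrightarrow{d_0} \mathbb{K} \to 0, \]
is an exact sequence of $A$-modules in which every term $C_i \otimes_{\mathbb{K}} A$ is free, hence projective; thus it is a projective resolution of $\mathbb{K}$ regarded as a left $A$-module via the augmentation $\epsilon$. By the lemma recorded above, the higher derived functors of $\mathcal{F}_{\mathbb{K}} = (-)\otimes_A \mathbb{K}$ do not depend on the chosen projective resolution, so $\Tor_i^A(\mathbb{K},\mathbb{K})$ may be computed from this particular one.

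Next I would apply $\mathcal{F}_{\mathbb{K}}$ to the deleted resolution and use the identification already carried out above, namely $(C_i \otimes_{\mathbb{K}} A)\otimes_A \mathbb{K} \cong C_i \otimes_{\mathbb{K}} \mathbb{K} \cong C_i$. This produces the chain complex
\[ \cdots \xrightarrow{d_3 \otimes_A \mathbb{K}} C_2 \xrightarrow{d_2 \otimes_A \mathbb{K}} C_1 \xrightarrow{d_1 \otimes_A \mathbb{K}} C_0 \to 0, \]
whose $i$-th homology is by definition $\Tor_i^A(\mathbb{K},\mathbb{K})$. Here the hypothesis enters: minimality of the resolution means exactly that $d_i \otimes_A \mathbb{K} = 0$ for all $i$, so every differential of this complex vanishes, and therefore
\[ \Tor_i^A(\mathbb{K},\mathbb{K}) \;=\; \frac{\ker\bigl(d_i \otimes_A \mathbb{K}\bigr)}{\Ima\bigl(d_{i+1}\otimes_A \mathbb{K}\bigr)} \;=\; \frac{C_i}{0} \;\cong\; C_i, \]
which is the desired conclusion.

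I do not expect a genuine obstacle here: the two nontrivial facts --- that Anick's construction yields an honest free resolution, and that $\Tor$ is computed by any projective resolution --- are precisely the theorem of Section~4.2 and the lemma above, both of which may be assumed. The only points requiring care are the bookkeeping that identifies $\mathcal{F}_{\mathbb{K}}$ applied to $C_\bullet \otimes_{\mathbb{K}} A$ with the complex $(C_\bullet, d_\bullet \otimes_A \mathbb{K})$, together with the elementary observation that a chain complex all of whose differentials are zero has homology equal to its own terms. Once minimality is granted the conclusion is forced, so the remaining work is purely formal.
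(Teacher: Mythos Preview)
Your proof is correct and follows exactly the approach the paper sets up: the corollary is stated immediately after the computation identifying $(C_i\otimes_{\mathbb{K}}A)\otimes_A\mathbb{K}\cong C_i$ and the definition of minimality as $d_i\otimes_A\mathbb{K}=0$, so that the homology of the tensored complex is trivially $C_i$. The paper gives no separate proof, treating it as the evident consequence of the preceding discussion, which is precisely what you have written out.
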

\begin{remark}
For algebra $B_1$, its Anick resolution is minimal. But for algebra $B_n$ for $n >1$, its Anick resolution is not minimal as $d_3 \otimes_A \mathbb{K} \neq 0$. 
\end{remark}
For more details about derived functors and minimal resolution we refer to [5], [6] and [17].
 
\subsection{Computation of $A_\infty$ algebra from Anick resolution}
In 1960's Jim Stasheff introduced the concept of $A_\infty$ spaces and $A_\infty$ algebras to study 'group like' topological spaces. In this section we give a brief introduction about $A_\infty$ algebras and computation of $A_\infty$ structures on $\Ext$-algebras. For more details we refer to [7], [8] and [22]. 
\begin{definition}
An $A_\infty$-algebra over a field $\mathbb{K}$ (also called a 'strongly homotopy associative algebra') is a $\mathbb{Z}$-graded vector space
\[ A = \bigoplus_{n \in \mathbb{Z}} A^n  \]
endowed with graded maps
\[ m_n : A^{\otimes n} \rightarrow A, \hspace{2mm} n \geq 1,  \]
of degree $2-n$ satisfying the following relations.
\begin{itemize}
\item $m_1m_1 = 0$, i.e. $(A,m_1)$ is a differential complex.
\item We have 
\[ m_1m_2 = m_2( m_1 \otimes \id + \id \otimes m_1) \]
as maps $A^{\otimes2} \rightarrow A$. Here $\id$ denotes the identity map of the space $A$. So $m_1$ is a graded derivation with respect to multiplication $m_2$.
\item We have
\[ m_2(\id \otimes m_2 - m_2 \otimes \id) = m_1m_3 +   \]
\[ m_3( m_1 \otimes \id \otimes \id + \id \otimes m_1 \otimes \id + \id \otimes \id \otimes m_1)  \]
as maps $A^{\otimes3} \rightarrow A$. This implies that $m_2$ is associative up to homotopy.
\item More generally, for $n \geq 1$, we have
\[ \sum (-1)^{r+st} m_u ( \id^{\otimes r} \otimes m_s \otimes \id^{\otimes t}) = 0  \]
where the sum runs over all decompositions $n = r+s+t$ and $u = r+t+1$.
\end{itemize}
\end{definition}
Following is an immediate consequence of the definition 7.2.1:
\begin{corollary}
If $m_n$ vanishes for all $n \geq 3$, then $A$ is an associative differential graded algebra and conversely each differential graded algebra yields an $A_\infty$-algebra structure with $m_n = 0$ for all $n \geq 3$.
\end{corollary}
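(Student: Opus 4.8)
The plan is to read the conclusion off the defining $A_\infty$-relations of Definition 7.2.1 by specializing them under the hypothesis $m_n = 0$ for $n \geq 3$, and then to run the same computation in reverse for the converse.

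For the forward direction, assume $m_n = 0$ for all $n \geq 3$. The first two bullets of Definition 7.2.1 already give $m_1 m_1 = 0$ and the graded Leibniz rule $m_1 m_2 = m_2(m_1 \otimes \id + \id \otimes m_1)$, so $(A, m_1, m_2)$ carries a square-zero differential that is a derivation for $m_2$. It remains to produce strict associativity of $m_2$, and for this I would invoke the third bullet: since $m_3 = 0$, the entire right-hand side $m_1 m_3 + m_3(m_1 \otimes \id \otimes \id + \id \otimes m_1 \otimes \id + \id \otimes \id \otimes m_1)$ vanishes, whence $m_2(\id \otimes m_2) = m_2(m_2 \otimes \id)$. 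Thus $(A, m_1, m_2)$ is an associative differential graded algebra. To be complete I would also check that the general relation for $n \geq 4$ holds automatically: in $\sum (-1)^{r+st} m_u(\id^{\otimes r} \otimes m_s \otimes \id^{\otimes t})$ with $n = r+s+t$ and $u = r+t+1$, any summand with $s \geq 3$ vanishes because $m_s = 0$, while any summand with $s \in \{1,2\}$ has $u = n-s+1 \geq 3$, so $m_u = 0$; hence every term is zero and the relation is trivially satisfied.

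For the converse, given a differential graded algebra with differential $d$ and associative product, I would set $m_1 = d$, let $m_2$ be the multiplication (using the Koszul sign convention, e.g. $m_2(a \otimes b) = (-1)^{|a|} ab$, so that the signs in the $A_\infty$-identities come out correctly), and put $m_n = 0$ for all $n \geq 3$. Then $m_1 m_1 = d^2 = 0$, the second relation is exactly the graded Leibniz rule for $d$, the third relation collapses (as $m_3 = 0$) to strict associativity of the product, and the relations for $n \geq 4$ hold by the same index count as above. This exhibits the required $A_\infty$-structure and completes the equivalence.

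The only genuinely delicate point is the sign bookkeeping: reconciling the signs $(-1)^{r+st}$ in the Stasheff identities with the graded-Leibniz and associativity conventions for $d$ and the product, which forces a careful choice of the Koszul sign rule on the tensor powers $A^{\otimes n}$ and of the degree normalization ($m_n$ of degree $2-n$). Once these conventions are pinned down the argument is immediate; the associativity extraction and the vanishing of the higher relations require no real work.
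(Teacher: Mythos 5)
Your argument is correct and is exactly the unpacking the paper has in mind: the paper offers no proof, presenting the corollary as an immediate consequence of Definition 7.2.1, and your specialization of the Stasheff identities (including the index count $u = n-s+1 \geq 3$ killing all relations for $n \geq 4$, and the converse with $m_1 = d$, $m_2$ the product) is the standard reasoning behind that claim. The sign-convention caveat you flag is the only subtlety, and you handle it appropriately.
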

\begin{theorem}[Kadeishvili]
Let $C$ be an $A_\infty$-algebra and let $E = HC$ be the cohomology ring of $C$. There exists an $A_\infty$ structure on $E$ with $m_1=0$ and $m_2$ induced by multiplication on $C$. 
\end{theorem}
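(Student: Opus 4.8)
The plan is to construct the maps $m_n$ on $E = HC$ inductively, together with auxiliary ``transfer'' maps $f_n \colon E^{\otimes n} \to C$, by an explicit homotopy-transfer (homological perturbation) procedure; this is exactly the Merkulov-style construction referred to in the introduction. First I would choose, for each cohomology group $E^i = \ker d^i / \operatorname{Im} d^{i-1}$, a linear splitting: a section $s \colon E \to \ker d \subset C$ of the projection $\pi \colon \ker d \to E$, and a contracting homotopy $h \colon C \to C$ of degree $-1$ with $d h + h d = \operatorname{id}_C - s\pi$ (after identifying $E$ with a subspace of $C$ via $s$). Such data exist over a field because every short exact sequence of vector spaces splits; this is the only place where the field hypothesis is used, and it is why the statement is phrased over $\mathbb{K}$.

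Next I would set $m_1 = 0$ on $E$ (cycles in $E$ carry the zero differential since $E$ is already the cohomology), and define $m_2 = \pi \circ \mu_C \circ (s \otimes s)$, where $\mu_C$ is the multiplication of the $A_\infty$-algebra $C$, i.e. $m_2$ is induced by the product on $C$; associativity of $m_2$ up to homotopy on $C$ forces $m_2$ to be strictly associative on $E$, but higher coherence generally fails, which is precisely what the $m_n$ for $n \ge 3$ repair. Then I would define $f_1 = s$ and, recursively for $n \ge 2$,
\[
f_n = - h \sum (-1)^{?}\, \mu_C^{(k)}\bigl(f_{i_1} \otimes \cdots \otimes f_{i_k}\bigr),
\]
the sum running over $k \ge 2$ and $i_1 + \cdots + i_k = n$ with each $i_j < n$, and
\[
m_n = \pi \sum (-1)^{?}\, \mu_C^{(k)}\bigl(f_{i_1} \otimes \cdots \otimes f_{i_k}\bigr)
\]
with the same index set; here $\mu_C^{(k)}$ denotes the relevant higher product structure of the $A_\infty$-algebra $C$ (not merely iterated $\mu_2$, since $C$ itself may have nontrivial $m_k$). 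Because $h$ lands in $C$ and $\pi$ kills boundaries, $m_1 = 0$ and $m_2$ induced from $C$ come out automatically, matching the two assertions of the theorem.

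The heart of the proof, and the step I expect to be the main obstacle, is verifying that the $m_n$ so defined satisfy the Stasheff identities
\[
\sum (-1)^{r+st} m_u\bigl(\operatorname{id}^{\otimes r} \otimes m_s \otimes \operatorname{id}^{\otimes t}\bigr) = 0, \qquad u = r + t + 1,
\]
and simultaneously that $(f_n)$ assembles into an $A_\infty$-morphism from $(E, m_\bullet)$ to $C$. The strategy is a single induction on $n$: assuming the identities hold in arities $< n$ and that $(f_{<n})$ is an $A_\infty$-morphism up to arity $n-1$, one expands the defining formula for $f_n$, applies $d$, uses $dh + hd = \operatorname{id} - s\pi$ together with the $A_\infty$-relations for $C$, and collects terms; the ``error'' terms organize exactly into the arity-$n$ Stasheff relation for $E$ after applying $\pi$, and into the arity-$n$ morphism relation after applying $\operatorname{id} - d h - h d$. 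This bookkeeping — tracking the Koszul signs $(-1)^{r+st}$ through the recursion and checking that everything telescopes — is the genuinely technical part; conceptually it is the standard homological perturbation lemma, so I would either carry out the sign computation carefully once for general $n$ or invoke the perturbation lemma in the form that a contraction of a chain complex underlying an $A_\infty$-algebra transfers the $A_\infty$-structure to the retract, which directly yields $(E, m_\bullet)$ with the asserted normalization $m_1 = 0$ and $m_2 = [\mu_C]$. Finally I would remark that in our setting $C$ will be taken to be the endomorphism dg-algebra $\operatorname{Hom}_A(P_\bullet, P_\bullet)$ of a minimal projective resolution of $\mathbb{K}$ — for which Anick's resolution is the computational input — so that $E = HC = \operatorname{Ext}_A^\bullet(\mathbb{K},\mathbb{K})$ acquires the $A_\infty$-structure.
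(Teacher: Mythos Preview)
Your proposal is a correct outline of the standard homotopy-transfer proof of Kadeishvili's theorem, and in fact it anticipates the Merkulov construction that the paper spells out in the very next subsection. However, the paper itself does not prove this theorem: its ``proof'' consists solely of the sentence ``For proof and more details about the theorem we refer to [23]'', i.e.\ a citation to Kadeishvili's original paper. So you have supplied substantially more than the paper does.

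Since your sketch is essentially the Merkulov construction (choose a contraction $(s,\pi,h)$ over the field $\mathbb{K}$, build $f_n$ and $m_n$ recursively, verify the Stasheff identities by induction using $dh+hd=\id-s\pi$), it is in complete agreement with the approach the paper goes on to describe in \S7.2.1, just organized as a proof of Theorem~7.2.1 rather than presented after it as a separate recipe. The one genuine gap in your write-up is that the signs are left as $(-1)^{?}$; to turn the sketch into a proof you would need to fix the Koszul sign convention and carry it through the induction, or else invoke the homological perturbation lemma in its $A_\infty$ form as you suggest. Either route is standard and would complete the argument.
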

\begin{proof}
For proof and more details about the theorem we refer to [23].
\end{proof}
\begin{remark}
Furthermore the $A_\infty$ structure on $E$, mentioned in theorem 7.2.1 is unique up to quasi-isomorphism[22] of $A_\infty$ algebras.
\end{remark}
\subsubsection{Merkulov's construction}
Let $A$ be a $\mathbb{K}$-algebra and let $(Q_.,d_.)$ be a minimal graded projective resolution of $\mathbb{K}$ by $A$-modules with $Q_0=A$. Here the resolution can be taken as Anick resolution and it is possible to construct a minimal resolution from Anick resolution. Let $U = \Hom_A(Q,Q)$ be the differential graded endomorphism algebra of $(Q_.,d_.)$ with multiplication given by composition. The minimality of the resolution implies that $\Hom_A(Q_.,\mathbb{K})$ is equal to its cohomology.\\
We compute the structure of an $A_\infty$  algebra on $\Hom_A(Q_.,\mathbb{K})$ by specifying the data of a strong deformation retraction from $U$ to $\Hom_A(Q_.,\mathbb{K})$. We choose maps $i : \Hom_A(Q_.,\mathbb{K}) \rightarrow U$, $p : U \rightarrow \Hom_A(Q_.,\mathbb{K})$ and a chain homotopy map $G: U \rightarrow U$ such that $i$ and $p$ are degree $0$ morphisms of complexes and $G$ is a homogeneous $\mathbb{K}$-linear map of degree $-1$ such that $pi = \id_{\Hom(Q,d)}$ and $\id_U - ip = \delta G - G\delta$, where $\delta$ is the differential on $U$ given by $\delta(f) = df - (-1)^{|f|}fd$ ($|f|$ is the degree of $f$).\\ 
We define a family of homogeneous $\mathbb{K}$-linear maps $\{ \lambda_n : U^{\otimes n} \rightarrow U \}_{n \geq 2}$ with $\deg(\lambda_n) = 2-n$ as follows:
\begin{itemize}
\item $\lambda_2$ is multiplication on $U$.
\item $G\lambda_1 = -\id_U$.
\item $\lambda_n = \sum_{\substack{s+t=n \\ s,t \geq 1}} (-1)^{s+1} \lambda_2(G\lambda_s \otimes G\lambda_t)$.
\end{itemize}
\begin{theorem}[Merkulov]
Let $m_1 =0$ and for $n \geq 2$, let $m_n = p\lambda_n i^{\otimes n}$. Then $(\Hom(Q_.,\mathbb{K}), m_1, m_2, m_3, \dots )$ is an $A_\infty$-algebra satisfying the conditions of theorem 7.2.1.
\end{theorem}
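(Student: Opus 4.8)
The plan is to prove Theorem~7.2.2 by the homological perturbation (homotopy transfer) technique: one first produces the auxiliary operations $\lambda_n$ on the differential graded algebra $U=\Hom_A(Q_.,Q_.)$, establishes a single ``master identity'' for the family $\{\lambda_n\}_{n\ge 2}$ on $U$, and then conjugates that identity with the inclusion $i$ and the projection $p$ to obtain, for the maps $m_n=p\,\lambda_n\,i^{\otimes n}$ on $E=\Hom_A(Q_.,\mathbb{K})$, exactly the Stasheff relations of Definition~7.2.1.

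I would begin with the formal reductions. Since $(Q_.,d_.)$ is a \emph{minimal} projective resolution of $\mathbb{K}$, the differential induced on $E$ is zero, so $E$ is literally the cohomology of $U$; in particular $m_1=0$ is automatic, $pi=\id_E$, $\id_U-ip=\delta G-G\delta$ with $\deg G=-1$, and moreover $p\delta=0$ and $\delta i=0$, since $p$ and $i$ are morphisms of complexes into, respectively out of, a complex whose differential vanishes. A one-step induction on the recursion $\lambda_n=\sum_{s+t=n,\,s,t\ge1}(-1)^{s+1}\lambda_2(G\lambda_s\otimes G\lambda_t)$, started from $\deg(G\lambda_1)=\deg(-\id_U)=0$ and $\deg\lambda_2=0$, gives $\deg\lambda_n=2-n$; since $i$ and $p$ have degree $0$ this forces $\deg m_n=2-n$, as Definition~7.2.1 demands. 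Finally $m_2=p\,\lambda_2\,(i\otimes i)$ is by construction the composition product of $U$ transported through $i$ and $p$, hence under the identification $E=H(U)$ is precisely the multiplication induced on $E$; so the normalization part of Theorem~7.2.1 holds.

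The technical core is the master identity: by induction on $n$ one shows that on $U$ the operation $\delta\lambda_n$ equals a fixed signed sum of composites of $\lambda_2$ with the lower operations $G\lambda_s$ ($s<n$), the precise shape of the sum being dictated by the target relations. The inductive step applies the graded Leibniz rule for $\delta$ with respect to the bilinear product $\lambda_2$ to the defining recursion, and then uses the perturbation relation in the form $\delta G=\id_U-ip-G\delta$ to resolve each $\delta(G\lambda_s)$; the resulting double sum telescopes and regroups into the required form. I expect the sign and combinatorial bookkeeping of this telescoping to be the main obstacle. The cleanest way to organize it is to pass to the reduced tensor coalgebra $T^c(U[1])$: there $\sum_n G\lambda_n$ assembles into a single morphism (a twisting-cochain-type map) that solves one fixed-point equation coming from Merkulov's recursion, and the master identity becomes the structural statement that the transported coderivation on $T^c(E[1])$ squares to zero.

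It then remains to transfer. Applying $p(-)\,i^{\otimes n}$ to the master identity, the term $-p(\delta\lambda_n)i^{\otimes n}$ dies because $p\delta=0$; on the other side, each $G$ that becomes adjacent to an outer $i$ or $p$ is annihilated by the side conditions $pG=0$, $Gi=0$, $G^2=0$, which one may assume after replacing $G$ with $G\delta G$ (a modification that preserves $pi=\id_E$ and $\id_U-ip=\delta G-G\delta$), while the interior factors reassemble, via $ip=\id_U-\delta G+G\delta$ and $\delta i=0$, into the operations $m_s=p\,\lambda_s\,i^{\otimes s}$. What survives is exactly $\sum_{r+s+t=n}(-1)^{r+st}m_{r+t+1}(\id^{\otimes r}\otimes m_s\otimes\id^{\otimes t})=0$, i.e.\ the relations of Definition~7.2.1; combined with $m_1=0$ and the identification of $m_2$ already recorded, this shows that $(\Hom_A(Q_.,\mathbb{K}),m_1,m_2,m_3,\dots)$ is an $A_\infty$-algebra of the kind asserted in Theorem~7.2.1, which is the claim.
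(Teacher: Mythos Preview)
The paper does not actually prove this theorem: it is stated as Merkulov's result in the survey section ``Further Research Directions'' and left without proof (no proof environment follows the statement). Your proposal therefore goes well beyond what the paper does, supplying the standard homotopy-transfer argument that the paper merely cites.

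Your outline is the correct and standard one: establish $\deg\lambda_n=2-n$ by induction on the recursion, prove a master identity for $\delta\lambda_n$ on $U$ via the graded Leibniz rule for $\lambda_2$ together with $\delta G=\id_U-ip-G\delta$, and then conjugate by $p$ and $i^{\otimes n}$ using $p\delta=0$, $\delta i=0$ and the side conditions on $G$ to collapse the identity into the Stasheff relations on $E$. The reformulation on $T^c(U[1])$ is indeed the cleanest way to manage the signs. One small technical caution: the single replacement $G\mapsto G\delta G$ does not by itself simultaneously yield all three side conditions $pG=0$, $Gi=0$, $G^2=0$ while preserving $\id_U-ip=\delta G-G\delta$; the usual procedure is a short sequence of modifications (first $G\mapsto(1-ip)G$ to get $pG=0$, then $G\mapsto G(1-ip)$ to get $Gi=0$, then a further adjustment for $G^2=0$), each of which one checks preserves the homotopy relation. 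This is a routine bookkeeping point and does not affect the validity of your overall strategy.
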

Merkulov's theorem applies to the subcomplex $i(\Hom_A(Q_.,\mathbb{K})$ of $U$ and gives structure maps $m_n = ip\lambda_n$ which together with the subcomplex forms an $A_\infty$ structure.
\begin{lemma}
$ E = \Ext_A( \mathbb{K}, \mathbb{K})$ is isomorphic to $HU$.
\end{lemma}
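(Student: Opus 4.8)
The plan is to realise both $E=\Ext_A(\mathbb{K},\mathbb{K})$ and $HU$ as the cohomology of cochain complexes manufactured from the single resolution $(Q_\bullet,d_\bullet)$, and then to exhibit one explicit comparison map between those complexes which is simultaneously a quasi-isomorphism and compatible with the two products. First recall that, by the definition of $\Ext$ via a projective resolution, $\Ext_A^n(\mathbb{K},\mathbb{K})=H^n\big(\Hom_A(Q_\bullet,\mathbb{K}),\partial\big)$, where $\partial$ is the differential induced by $d_\bullet$, and that this is independent of the chosen resolution (see [6], and compare the independence statement of subsection 7.1 applied to $\Hom_A(-,\mathbb{K})$). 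Since $(Q_\bullet,d_\bullet)$ is \emph{minimal}, $\partial=0$, so in fact $E^n=\Hom_A(Q_n,\mathbb{K})$, and the graded-algebra structure on $E$ is the Yoneda (composition) product.

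Next, let $\pi\colon Q_\bullet\to\mathbb{K}$ be the augmentation of the resolution, viewed as a morphism of complexes of $A$-modules with $\mathbb{K}$ placed in degree $0$ (here $Q_0=A\xrightarrow{\ \pi\ }\mathbb{K}$ is the algebra augmentation and $\pi$ is zero in positive degrees). Because $(Q_\bullet,d_\bullet)$ resolves $\mathbb{K}$, the map $\pi$ is a quasi-isomorphism. Post-composition with $\pi$ yields a morphism of cochain complexes
\[ \pi_*\colon U=\Hom_A(Q_\bullet,Q_\bullet)\longrightarrow \Hom_A(Q_\bullet,\mathbb{K}),\qquad \pi_*(f)=\pi\circ f, \]
which intertwines the differential $\delta(f)=d f-(-1)^{|f|}f d$ on $U$ with $\partial$ on the target. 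Since $Q_\bullet$ is a bounded-below complex of projective (indeed free) $A$-modules, the functor $\Hom_A(Q_\bullet,-)$ sends quasi-isomorphisms to quasi-isomorphisms (standard homological algebra; [6]), so $\pi_*$ is a quasi-isomorphism. Passing to cohomology produces a graded isomorphism $HU\cong H\big(\Hom_A(Q_\bullet,\mathbb{K})\big)=E$.

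It remains to check that this isomorphism is multiplicative. The product on $HU$ is induced by composition $f\otimes g\mapsto f\circ g$ in $U$, while the product on $E$ is the Yoneda product. I would verify compatibility on representatives: a degree-$n$ cocycle of $U$ is, up to coboundary, a chain endomorphism of $Q_\bullet$ of the appropriate degree lifting a cocycle $Q_n\to\mathbb{K}$, and composing two such lifts is exactly the recipe by which the Yoneda composite of the corresponding $\Ext$-classes is computed; hence $\pi_*$ carries composition in $U$ to the Yoneda product in $E$. Consequently $E\cong HU$ as graded algebras, which is the assertion.

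The part I expect to require the most care is not the construction of $\pi_*$ but the verification that it is a quasi-isomorphism together with the multiplicativity check: one must keep track of the signs in $\delta(f)=d f-(-1)^{|f|}f d$ and pin down the identification of the composition product on $HU$ with the Yoneda product on $E$. The minimality hypothesis is precisely what keeps this manageable, since it collapses $\Hom_A(Q_\bullet,\mathbb{K})$ onto its own cohomology and thereby removes any ambiguity in choosing cocycle representatives.
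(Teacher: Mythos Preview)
Your argument is correct and is the standard one: post-composition with the augmentation quasi-isomorphism $\pi\colon Q_\bullet\to\mathbb{K}$ gives a map of complexes $U\to\Hom_A(Q_\bullet,\mathbb{K})$, and the fact that $Q_\bullet$ is a bounded-below complex of projectives ensures that $\Hom_A(Q_\bullet,-)$ preserves quasi-isomorphisms, so $\pi_*$ is one. The multiplicativity check via lifting-and-composing is exactly how the Yoneda product is defined, so that part is fine as well.

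There is nothing to compare against in the paper: the lemma is stated without proof, as part of the brief outline in Section~7.2 (with implicit reference to [8], [22], [23]). So your write-up in fact supplies what the paper omits. One small remark: your invocation of ``$\Hom_A(Q_\bullet,-)$ sends quasi-isomorphisms to quasi-isomorphisms'' is the only place that genuinely uses projectivity and bounded-belowness of $Q_\bullet$, and it is worth making explicit that this follows from the acyclicity of $\Hom_A(Q_\bullet,C)$ whenever $C$ is acyclic (mapping-cone argument). Also note that minimality is not needed for the isomorphism $HU\cong E$ itself---it only simplifies the identification $H(\Hom_A(Q_\bullet,\mathbb{K}))=\Hom_A(Q_\bullet,\mathbb{K})$---so your proof actually establishes the lemma in slightly greater generality than the paper's standing hypothesis suggests.
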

So we have seen how to get an $A_\infty$ algebra structure on $\Ext$-algebras.
\begin{remark}
If one use the Merkulov's construction to compute $A_\infty$-algebra structure associated to algebra $\mathbb{K}\langle x \rangle /(x^2)$ by constructing its Anick resolution then one can find that all higher maps in $A_\infty$ structure vanish for $n \geq 3$. A Koszul algebra canonically carries an $A_\infty$ structure will all higher maps vanish for $n \geq 3$ [7]. Algebra $\mathbb{K}\langle x \rangle /(x^2)$ is Koszul and hence the above computation through Merkulov's construction verifies this fact. 
\end{remark}

\newpage

\end{document}